\newtheorem{theorem}{Theorem}[section]
\newtheorem{lemma}[theorem]{Lemma}
\newtheorem{corollary}[theorem]{Corollary}
\newtheorem{introtheorem}{Theorem}
\newtheorem{introconjecture}[introtheorem]{Conjecture}
\newtheorem{introcor}[introtheorem]{Corollary}
\theoremstyle{definition}
\newtheorem{definition}[theorem]{Definition}
\newtheorem{example}[theorem]{Example}
\newenvironment{Aproof}{%
\proof}{\endproof}
\newenvironment{Bproof}{%
\proof}
{\endproof}
\theoremstyle{remark}
\newtheorem{remark}[theorem]{Remark}
\newcommand{\rank}{\operatorname{rank}}
\newcommand{\Hilb}{\operatorname{Hilb}}
\newcommand{\Proj}{\operatorname{Proj}}
\newcommand{\reg}{\operatorname{reg}}
\newcommand{\Manoa}{M\=anoa}
\newcommand{\Hawaii}{Hawai\kern.05em`\kern.05em\relax i}
\DeclarePairedDelimiter\floor{\lfloor}{\rfloor}
\title{Asymptotic Syzygies of Weighted Projective Spaces}
\author{Boyana Martinova}
\begin{document}

\begin{abstract}
By adapting methods of Ein-Erman-Lazarsfeld, we prove an analogue of the Ein-Lazarsfeld nonvanishing result on asymptotic syzygies for Veronese embeddings, in the setting of weighted projective spaces of the form $\mathbb{P}(1^n,2)$.
\end{abstract}
\maketitle

\vspace{-1.5cm}

\section{Introduction}

The aim of this paper is to describe the nonvanishing asymptotic syzygies of $\mathbb{P}(1^n,2)$. Since Green's pioneering work \cite{Green},  understanding the syzygies of projective varieties has been at the center of much research. The asymptotic picture is well-understood for curves: the syzygies become simpler as the positivity of the embedding line bundle grows. It was believed that a similar story held in higher dimensions; however, Ein and Lazarsfeld \cite{EinLazarsfeld_2012} showed the contrary: ``almost every" entry in the Betti table corresponding to the Veronese embedding $\varphi \colon \mathbb{P}^n \xrightarrow[]{|\mathcal{O}(d)|} \mathbb{P}^N$ is nonzero for $d
\gg 0$. To be precise, the regularity and projective dimension provide preexisting bounds on the size of the Betti table, so the result states that, asymptotically, there are generators in essentially every allowable degree. 
Ein-Erman-Lazarsfeld \cite{EEL_2016} later showed the same nonvanishing result by using only facts about the monomials that generate the Veronese embedding. 

By adapting the monomial method developed in \cite{EEL_2016}, we prove similar nonvanishing results for sufficiently large Veronese embeddings of the weighted projective space $\mathbb{P}(1^n,2)$. Those familiar with weighted projective spaces may be confused by our notation, since $\mathcal{O}(d)$ may fail to be very ample. We instead choose the embedding given by the entire Veronese subring of degree $d$ (see Definition \ref{def:weightedVer}). Our main results are summarized below. 

\begin{introtheorem}\label{Thrm:A}
    Let $R$ be the coordinate ring of $\mathbb{P}(1^n,2)$. Let $R^{(d)}$ be the degree $d$ Veronese subring and $S$ be a graded polynomial ring with a minimal surjection $S \to R^{(d)}$ (See Definition \ref{def:weightedVer}). 
    Let $N:= \dim \Proj(S)$. For each $1 \leq q \leq n+1$, there exist constants $c_q$, $C_q$ such that, for any $d \gg 0$, we have $\beta_{i, i+ q}^S\left(R^{(d)}\right) \neq 0$ for $i$ within the following ranges:
\begin{center}
    \bgroup
    \def\arraystretch{1.15}
    \begin{tabular}{ c | c c c | c c c} 
    Row Index ($q$) & \multicolumn{6}{c}{Corresponding Range of $i$ Values} \\
    \hline
    & \multicolumn{3}{c|}{$d$ even} & \multicolumn{3}{c}{$d$ odd} \\
    1 & 1 & --- &  $N-C_1d^{n-1}$ & 1 & --- & $N-C_1d^{n-1}$ \\
    2 & $c_2 d^{1}$ & --- & $N-C_2 d^{n-2}$ & $c_2d^0$ & --- & $N-C_2d^{n-2}$  \\ 
    $\vdots$ & &$\vdots$ & & & $\vdots$ & \\
    $n-1$ & $c_{n-1}d^{n-2}$ & --- & $N-C_{n-1}d^1$ &  $c_{n-1}d^{n-3}$ & --- & $N-C_{n-1}d^1$ \\
    $n$ & $c_nd^{n-1}$ & --- & $N-n$ & $c_nd^{n-2}$ & --- & $N- n - (n \mod 2)$ \\
    $n+1$ & &$\emptyset$& & $c_{n+1}d^{n-1}$ & --- & $N-n$ \\
    \end{tabular}
    \egroup
\end{center}
\end{introtheorem}

Regularity computations confirm that the Betti table is zero in rows $q > n$ for $d$ even (Lemma \ref{lem:dEvenGens}) and $q > n + 1$ for $d$ odd (Corollary \ref{cor:lastRow}), so the above theorem classifies all rows of $\beta^S(R^{(d)})$.

Theorem \ref{Thrm:A} is a corollary of the following, more precise, result. 

\begin{introtheorem}
\label{thrm:preciseBoundsIntro}
        For $d \gg 0$, the Betti table of the $d$th Veronese embedding $\varphi_d(\mathbb{P}(1^n, 2)) \subseteq \Proj(S)$ (as above) has nonvanishing Betti entries $\beta_{i, i+q}$ for all $F_q(d) \leq i \leq B_q(d)$, where $F_q(d)$ and $B_q(d)$ are defined in the below table. 
        
    \begin{tblr}{
  colspec={Q[2.5cm]Q[2.3cm]Q[10.9cm]},
  hlines,vlines,
  cells={valign=m},
  rows={ht=3\baselineskip},
  row{1}={ht=1.5\baselineskip,font=\bfseries},
    }
        \textbf{Row Index} & \textbf{Parity of } $\bm{d}$ & $\bm{F_q(d)}$ \textbf{ and } $\bm{B_q(d)}$ \\
    
         $q = 1$ & $d=$ even & $F_q(d) = 1$ \newline $B_q(d) =  N - \displaystyle\sum_{b=0}^{\frac{d}{2}} \textstyle{ d-2b+n-2 \choose n-2} + n -2 $ \\
    
         $q = 1$& $d =$ odd & $F_q(d) = 1$ \newline $B_q(d) = N - \displaystyle\sum_{b = 0}^{\frac{d-1}{2}} \textstyle{d-2b + n-2 \choose n-2} + n-3$ \\
    
         $2 \leq q \leq n-1$ & $d =$ even & $F_q(d) = \displaystyle\sum_{b=0}^{\frac{d}{2}} \textstyle{ d-2b+q-1 \choose q-1} - \displaystyle\sum_{b=0}^{\floor{\frac{d-q-2}{2}}} \textstyle{d-2b-3 \choose q-1} -q$ \newline $B_q(d) =  N - \displaystyle\sum_{b=0}^{\frac{d}{2}} \textstyle{ d-2b+n-q-1 \choose n-q-1} + \displaystyle\sum_{b=0}^{\floor{\frac{q}{2}}} \textstyle{-2b+n-1 \choose n-q-1} -q$ \\
    
         $2 \leq q \leq n-1$ & $d =$ odd &  $F_q(d) = \displaystyle\sum_{b=0}^{\frac{d-1}{2}} \textstyle{d - 2b + q - 2 \choose q - 2} - \displaystyle\sum_{b=0}^{\floor*{\frac{d-q-1}{2}}} \textstyle{d-2b-3 \choose q-2} - q + 2$ \newline $B_q(d) = N - \displaystyle\sum_{b = 0}^{\frac{d-1}{2}} \textstyle{d-2b + n-q-1 \choose n-q-1} + \displaystyle\sum_{b=0}^{\floor*{\frac{q}{2}}} \textstyle{-2b+n-1 \choose n-q-1} -q-1$\\ 
    
        $q = n$ & $d =$ even & $F_q(d) = \displaystyle\sum_{b=0}^{\frac{d}{2}} \textstyle{ d-2b+n-1 \choose n-1} - \displaystyle\sum_{b=0}^{\floor{\frac{d-n-2}{2}}} \textstyle{d-2b-3 \choose n-1} -n$ \newline $B_q(d)= N - n$\\
    
        $q = n$ & $d =$ odd & $F_q(d) = \displaystyle\sum_{b=0}^{\frac{d-1}{2}} \textstyle{d - 2b + n - 2 \choose n - 2} - \displaystyle\sum_{b=0}^{\floor*{\frac{d-n-1}{2}}} \textstyle{d-2b-3 \choose n-2} - n + 2$ \newline $B_q(d)= N - n - (n \mod 2)$\\
    
        $q = n+1$ & $d =$ odd & $F_q(d) = \displaystyle\sum_{b=0}^{\frac{d-1}{2}} \textstyle{d - 2b + n-1 \choose n-1} - \displaystyle\sum_{b=0}^{\floor*{\frac{d-n-2}{2}}} \textstyle{d-2b-3 \choose n-1} - n +1$ \newline $B_q(d) = N - n$\\
    \end{tblr}
\end{introtheorem}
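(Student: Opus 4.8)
The plan is to adapt the squarefree-divisor-complex form of the Ein--Erman--Lazarsfeld monomial method. Write $A = \bbk[x_1,\dots,x_n,y]$ with $\deg x_i = 1$ and $\deg y = 2$ for the Cox ring of $\bbP(1^n,2)$, so that the homogeneous coordinate ring of $\varphi(\bbP(1^n,2))\subseteq\Proj(S)$ is the Veronese semigroup ring $R = A^{(d)} = \bigoplus_{k\ge 0} A_{kd}$, whose $\bbk$-algebra generators are exactly the degree-$d$ monomials of $A$, i.e.\ the $N+1$ variables of $S = \Sym(A_d)$. Because $R$ is an $\bbN^{n+1}$-graded (toric) quotient of $S$, each multidegree $\mathbf{b}\in\bbN^{n+1}$ carries the combinatorial identification $\Tor_i^S(R,\bbk)_{\mathbf b}\cong \tilde H_{i-1}(\Delta_{\mathbf b};\bbk)$, where $\Delta_{\mathbf b}$ is the simplicial complex on the degree-$d$ monomials dividing $x^{\mathbf b}$ whose faces $\sigma$ are those with $\prod_{u\in\sigma} u \mid x^{\mathbf b}$. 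Hence $\beta_{i,i+q}\neq 0$ as soon as there exists a single multidegree $\mathbf b$ of total degree $\deg_A x^{\mathbf b} = (i+q)d$ with $\tilde H_{i-1}(\Delta_{\mathbf b};\bbk)\neq 0$, and the whole theorem reduces to exhibiting such a $\mathbf b$ for every row $q$, every parity of $d$, and every $i$ in the stated range.

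First I would establish the lower bounds $F_q(d)$ by choosing $\mathbf b$ so that $\Delta_{\mathbf b}$ is homotopy equivalent to a sphere $S^{i-1}$. Concretely, I would build $x^{\mathbf b}$ as a product of ``blocks'', each block a degree-$2d$ monomial admitting exactly two degree-$d$ factorizations so that its two vertices form an $S^0$; the join of these $0$-spheres (amalgamated with one controlled higher-dimensional piece that absorbs the residual weight $q$) is then a sphere, and the K\"unneth formula for joins forces $\tilde H_{i-1}(\Delta_{\mathbf b})=\bbk$. The weight-$2$ variable enters precisely here: when $d$ is even the pure power $y^{d/2}$ is itself a degree-$d$ monomial and supplies extra factorizations, whereas when $d$ is odd every degree-$d$ monomial must carry at least one $x_i$. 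This is the source of the even/odd dichotomy and of the additional row $q=n+1$ in the odd case. Determining the smallest $i$ for which an admissible block decomposition of total degree $(i+q)d$ still exists is what produces the alternating binomial sums $\sum_b\binom{d-2b+q-1}{q-1}-\sum_b\binom{d-2b-3}{q-1}$ in $F_q(d)$, each term counting degree-$d$ monomials of a weighted subring.

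For the upper bounds $B_q(d)$ I would appeal to duality rather than constructing spheres near the end of the resolution. As a normal---hence Cohen--Macaulay---toric ring of dimension $n+1$ and codimension $N-n$, $R$ has its high-homological-degree syzygies identified, via Serre/Green duality on $\bbP(1^n,2)$, with the low-homological-degree syzygies of the canonical module $\omega_R$; this exchanges row $q$ with row $n-q$ up to a parity shift coming from $\deg y = 2$. That symmetry is exactly what the bounds record: the leading term $\sum_b\binom{d-2b+n-q-1}{n-q-1}$ of $N-B_q(d)$ counts degree-$d$ monomials of $\bbk[x_1,\dots,x_{n-q},y]$ and agrees with the leading term of $F_{n-q}(d)$. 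The upper-range nonvanishing therefore follows from a lower-range construction applied to the dual module, while the correction sums $\sum_b\binom{-2b+n-1}{n-q-1}$, the $-q$ versus $-q-1$ offsets, and the $(n\bmod 2)$ term in the $q=n$ odd row all come from tracking this duality separately in each parity. The vanishing statements of Lemma~\ref{lem:dEvenGens} and Corollary~\ref{cor:lastRow} then confirm that $B_n(d)=N-n$ and $B_{n+1}(d)=N-n$ sit at the extreme end of the resolution, matching $\operatorname{pd}_S R = N-n$.

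The main obstacle will be pinning down the exact endpoints rather than merely a cofinal range. Certifying that the join/sphere construction succeeds for every $i$ down to $F_q(d)$ and up to $B_q(d)$---and fails just outside---demands a precise inclusion--exclusion count of degree-$d$ monomials subject to the divisibility constraints, and the weight-$2$ variable makes this count genuinely two-parameter, since one must simultaneously control the total degree and the parity contributed by powers of $y$. A closely related difficulty is ruling out unexpected extra faces of $\Delta_{\mathbf b}$ that could cancel the homology of the intended join; verifying this vanishing uniformly in $d$ and across both parities is where I expect the bulk of the combinatorial work, and the delicacy of these counts, to lie.
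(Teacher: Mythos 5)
Your overall framework --- certifying $\beta_{i,i+q}\neq 0$ by exhibiting a multidegree whose squarefree divisor complex has nonvanishing reduced homology --- is sound and is essentially equivalent to the Koszul-strand computation the paper uses. But two of your three main steps have genuine gaps. First, the lower-bound construction: a join of $k$ zero-spheres coming from degree-$2d$ blocks with exactly two degree-$d$ factorizations produces homology in $\tilde{H}_{k-1}$ at total degree $2kd$, i.e.\ a Betti entry $\beta_{k,2k}$ near the diagonal of the table; it does not, by itself, produce entries in a fixed row $q$ for $i$ ranging over nearly all of $[0,N-n]$, which is what the theorem asserts (the gap $B_q(d)-F_q(d)$ grows like $d^n$). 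The ``controlled higher-dimensional piece that absorbs the residual weight'' is exactly where the entire difficulty sits, and it is not supplied. The paper's route avoids this: a single monomial $m\in\overline{M}_q$ with $D(m)\subseteq A(m)$ yields the whole interval $|D(m)|\leq i\leq |A(m)|$ at once (Lemma \ref{Lem:EEL}), so only the two endpoints need to be computed, and those are obtained by Hilbert-function counts on explicit quotient rings (Lemmas \ref{lem:Fqd}, \ref{lem:genBqd}, \ref{lem:evenFqd}, \ref{lem:evenBqd1}).

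Second, the upper bounds: deducing $B_q(d)$ from Cohen--Macaulay duality would require proving the corresponding sharp lower-range nonvanishing for the canonical module $\omega_R$, a different module whose generators and annihilator structure you would have to work out from scratch, and the asserted exchange of row $q$ with row $n-q$ together with its parity corrections is stated rather than derived. The paper sidesteps this entirely by choosing a second monomial per row --- the lex-leading one for $x_0>\cdots>x_{n-1}>y$ --- and counting its non-annihilators directly (Lemmas \ref{lem:genBqd}, \ref{lem:secondLastBqd}, \ref{lem:lastBqd}, \ref{lem:EvenBqd2}). Finally, because two different monomials are used per row, one must verify that the two resulting intervals of nonvanishing actually overlap; the paper devotes Lemmas \ref{lem:oddOverlap} and \ref{lem:evenOverlap} to exactly this, whereas your proposal has no analogue of that step. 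Without it, the claimed nonvanishing for all $F_q(d)\leq i\leq B_q(d)$ does not follow even if both endpoint computations were carried out correctly.
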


Recall the notation introduced by Erman-Yang \cite{ErmanYang_2018}, where they define $\rho_q(M)$ as the ratio of nonzero entries in the $q$th row of the Betti table. In particular, 

\[\rho_q(M):= \frac{\#i\in[0,\operatorname{pdim}(M)] \text{ where } \beta_{i,i+q}(M) \neq 0 }{\operatorname{pdim}(M) +1}.\]
Remark \ref{rmk:asypN} shows that, for $d\gg 0$, $N$ is on the same order of magnitude as $d^n$, so the formulas in Theorem \ref{thrm:preciseBoundsIntro} yield the following corollary. 

\begin{introcor}
    We continue with the hypotheses of Theorem \ref{thrm:preciseBoundsIntro}, and let $M$ be the coordinate ring of the Veronese. Then, we have
    $\rho_q(M) = \begin{cases}
        1 & \text{if } 1 \leq q \leq n\\
        1 & \text{if } q=n+1 \text{ and } d \text{ is odd} \\
        0 & \text{else }
    \end{cases}$
\end{introcor}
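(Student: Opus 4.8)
The plan is to extract the two quantities in the definition of $\rho_q(M)$ directly from the asymptotics in Theorem \ref{thrm:preciseBoundsIntro} and check that their ratio tends to the stated value as $d \to \infty$ (so the equalities are read in the limiting sense of Erman--Yang, with the zero cases holding on the nose for $d \gg 0$). By Theorem \ref{thrm:preciseBoundsIntro}, every index $i$ with $F_q(d) \le i \le B_q(d)$ gives a nonvanishing entry $\beta_{i,i+q}(M)$, so the numerator of $\rho_q$ lies between $B_q(d) - F_q(d) + 1$ and the total number of columns $\operatorname{pdim}(M)+1$. Thus it suffices to show that $B_q(d) - F_q(d) + 1$ and $\operatorname{pdim}(M)+1$ agree to leading order, after which a squeeze between $\tfrac{B_q(d)-F_q(d)+1}{\operatorname{pdim}(M)+1}$ and $1$ finishes the nonvanishing rows.

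First I would pin down the denominator. Since $M = S/I$ is a cyclic module over the polynomial ring $S$ on $N+1$ variables, Hilbert's syzygy theorem gives $\operatorname{pdim}(M) \le N+1$, while the nonvanishing of $\beta_{B_1(d),\,B_1(d)+1}(M)$ from Theorem \ref{thrm:preciseBoundsIntro} forces $\operatorname{pdim}(M) \ge B_1(d)$. As $N+1 = \dim_{\bbk} H^0\bigl(\mathbb{P}(1^n,2),\mathcal{O}(d)\bigr) = \sum_{b=0}^{\floor{d/2}}\binom{d-2b+n-1}{n-1}$ is a sum of $\Theta(d)$ terms each of size $\Theta(d^{n-1})$, we have $N = \Theta(d^n)$, and together with $N - B_1(d) = O(d^{\,n-2})$ this yields $\operatorname{pdim}(M)+1 \sim N \sim \tfrac{1}{2\,n!}\,d^n$.

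Next I would estimate the numerator for the rows $1 \le q \le n$, together with $q = n+1$ when $d$ is odd. In each such case the formulas of Theorem \ref{thrm:preciseBoundsIntro} express $F_q(d)$ and $N - B_q(d)$ as differences of sums of binomial coefficients; tracking degrees in $d$ gives $F_q(d) = O(d^{\,q-1})$ for $d$ even and $O(d^{\,q-2})$ for $d$ odd, and $N - B_q(d) = O(d^{\,n-q})$. For the relevant range of $q$ all three exponents are strictly below $n$, so $F_q(d) = o(d^n)$ and $N - B_q(d) = o(d^n)$, whence $B_q(d) - F_q(d) + 1 = N\bigl(1 - o(1)\bigr) \sim \operatorname{pdim}(M)+1$ and $\rho_q(M) \to 1$. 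For the genuinely vanishing rows---namely $q > n$ with $d$ even (in particular $q = n+1$, $d$ even) and $q > n+1$ with $d$ odd---I would instead invoke Lemma \ref{lem:dEvenGens} and Corollary \ref{cor:lastRow}, which show the entire row is zero, giving $\rho_q(M) = 0$ exactly.

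The hard part will be the degree bookkeeping in the third step: for $2 \le q \le n$ the two binomial sums defining $F_q(d)$ each grow like $\tfrac{1}{2\,q!}\,d^q$, so the claimed bound $F_q(d) = o(d^n)$ hinges on their top-degree terms cancelling and the difference dropping by a full power of $d$ to land at order $d^{q-1}$. A clean way to organize this is to reindex each sum by $m = d - 2b$, so that both become sums of $\binom{m}{q-1}$-type terms over arithmetic progressions of step $2$ differing only near their top endpoints; evaluating the leading terms via the hockey-stick identity confirms the cancellation and isolates the surviving $O(d^{\,q-1})$ (respectively $O(d^{\,q-2})$) contribution, and the analogous bookkeeping for $\sum_{b=0}^{\floor{d/2}}\binom{d-2b+n-q-1}{n-q-1}$ shows $N - B_q(d) = \Theta(d^{\,n-q})$. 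Since every exponent produced this way is at most $n-1$, all correction terms are $o(d^n)$ and the limit computation goes through.
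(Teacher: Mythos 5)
Your proposal is correct and follows essentially the same route as the paper: the paper likewise deduces the result from the fact that $N\sim d^n$ (Remark \ref{remark:asymptoticsN}) while $F_q(d)$ and $N-B_q(d)$ are of strictly lower order in $d$, and handles the vanishing rows via Lemma \ref{lem:dEvenGens} and Corollary \ref{cor:lastRow}. The only cosmetic difference is that you redo the degree bookkeeping (including the top-degree cancellation needed when $q=n$) directly from the binomial sums, whereas the paper packages exactly that cancellation into Theorem \ref{thrm:asympt} by recognizing $F_q(d)$ and $N-B_q(d)$, up to constants, as Hilbert functions of hypersurfaces in the rings $R_{q,1}$ and $R_{n-q,1}$.
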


This corollary mirrors the original results of Ein-Lazarsfeld \cite{EinLazarsfeld_2012}; it implies that ``almost every" Betti entry in the allowable range is nonzero in this setting. Note that our results do not guarantee there are no nonzero entries outside of the stated bounds. However, for small, computable examples, these bounds seem to correctly locate all nonzero entries, which inspires the following conjecture, joint with Daniel Erman. 
\begin{introconjecture}[Erman-Martinova]
\label{conjecture}
    The bounds presented in Theorem \ref{thrm:preciseBoundsIntro} are sharp. More specifically, $\beta_{i,i+1} =0 $ for $i$ outside of the ranges provided in Theorem \ref{thrm:preciseBoundsIntro}.
\end{introconjecture}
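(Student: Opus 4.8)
The plan is to prove the sharpness statement for the first row, namely $\beta_{i,i+1}=0$ when $i>B_1(d)$ (the lower bound $i\geq 1$ is immediate, since $\beta_{0,1}=0$ for a coordinate ring), and then indicate how the argument should extend to the remaining rows. Write $R=\bbk[x_1,\dots,x_n,y]$ with $\deg x_i=1$ and $\deg y=2$, so that the sections defining the embedding are the monomials of $R$-degree $d$ and $M=\bigoplus_{k\geq0}R_{kd}$ is an affine semigroup ring, a quotient of $S=\Sym(R_d)$ by a toric ideal. The main tool is the combinatorial description of the multigraded Betti numbers: for each fine degree $\mathbf c$ lying in the semigroup, $\beta_{i,\mathbf c}(M)=\dim_\bbk\tilde H_{i-1}(\Delta_{\mathbf c};\bbk)$, where $\Delta_{\mathbf c}$ is the squarefree divisor complex on the generators $R_d$ whose faces are the sets $\sigma\subseteq R_d$ with $\mathbf c-\sum_{m\in\sigma}m$ still in the semigroup. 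Summing over the fine degrees $\mathbf c$ of $R$-degree $(i+1)d$ recovers $\beta_{i,i+1}(M)$, so the entire conjecture reduces to a homology-vanishing statement for an explicit family of simplicial complexes.

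First I would analyze the complexes $\Delta_{\mathbf c}$ for $\mathbf c$ of $R$-degree $(i+1)d$. A set $\sigma$ of $j$ distinct degree-$d$ monomials is a face exactly when the complementary monomial $x^{\mathbf c}/\prod_{m\in\sigma}m$ has nonnegative exponents and $R$-degree $(i+1-j)d$; thus faces encode partial factorizations of $x^{\mathbf c}$ into degree-$d$ pieces, and $\tilde H_{i-1}(\Delta_{\mathbf c})$ measures the cycles supported on such factorizations. The feature distinguishing this from the ordinary ($\mathbb P^n$) Veronese is the exponent of the weight-two variable $y$: the parity of this exponent, together with how it must be distributed among the factors, is precisely what produces the two-column (even/odd) behavior and the correction terms in $B_q(d)$. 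I would therefore stratify the $\mathbf c$ by the $y$-exponent and show that, once $i>B_1(d)$, every such $x^{\mathbf c}$ is forced into factorizations rigid enough that $\Delta_{\mathbf c}$ collapses onto a subcomplex of dimension $<i-1$, or is otherwise acyclic in degree $i-1$. The closed form of $B_1(d)$ in Theorem \ref{thrm:preciseBoundsIntro} should emerge as exactly the largest $i$ for which some stratum still supports a nonzero class.

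As an independent check and a possible second route, I would use canonical-module duality. Since weighted projective spaces have vanishing intermediate cohomology, for $d\gg0$ the module $M$ is Cohen--Macaulay of projective dimension $N-n$, and $\beta_{i,i+1}(M)=\beta_{N-n-i,\,\ast}(\omega_M)$ with $\omega_M=\bigoplus_k R_{kd-n-2}$ (as $\omega_{\mathbb P(1^n,2)}=\mathcal O(-n-2)$). Under this duality the tail of the first row of $M$ becomes the head of a single fixed row of $\omega_M$, so the desired vanishing translates into the statement that $\omega_M$ has no generators and low syzygies in that row below homological degree $N-n-B_1(d)$, a finite and explicitly computable condition on the pieces $R_{kd-n-2}$. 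The main obstacle, in either approach, is that proving \emph{vanishing} of homology is global in a way that the nonvanishing half of Theorem \ref{thrm:preciseBoundsIntro} is not: exhibiting one cycle suffices for nonvanishing, whereas ruling out all classes in degree $i-1$ for every relevant $\mathbf c$ requires a uniform acyclicity mechanism. I expect the crux to be the construction of an explicit discrete Morse matching (or a shelling) on the $\Delta_{\mathbf c}$, valid simultaneously for all $\mathbf c$ with $i>B_q(d)$, whose critical cells avoid dimension $i-1$; aligning the combinatorial threshold of such a matching with the exact binomial-sum formula for $B_q(d)$ is the delicate step, and extending it from $q=1$ to general $q$ is what would upgrade the result from the ``more specifically'' clause to the full conjecture.
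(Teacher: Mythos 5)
This statement is a \emph{conjecture} in the paper: the author offers no proof, only computational evidence from small examples (e.g.\ the $d=5$ Veronese of $\mathbb{P}(1,1,2)$, where the monomial-derived bounds happen to coincide with the actual Betti table). So there is nothing in the paper to compare your argument against, and the real question is whether your proposal closes the gap. It does not: what you have written is a research plan, not a proof. The reduction to squarefree divisor complexes is correct in principle (the Bruns--Herzog formula $\beta_{i,\mathbf c}(M)=\dim_\bbk\tilde H_{i-1}(\Delta_{\mathbf c};\bbk)$ applies to the semigroup ring $R^{(d)}$ in its fine grading, and the duality $\beta_{i,\ast}(M)\leftrightarrow\beta_{N-n-i,\ast}(\omega_M)$ is available since $M$ is Cohen--Macaulay), but the central step --- showing that $\tilde H_{i-1}(\Delta_{\mathbf c})=0$ for \emph{every} $\mathbf c$ of coarse degree $i+q$ once $i>B_q(d)$ --- is exactly the conjecture restated in combinatorial language. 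The sentence ``I would therefore stratify\dots and show that $\Delta_{\mathbf c}$ collapses onto a subcomplex of dimension $<i-1$'' asserts the conclusion without supplying the matching, shelling, or any other acyclicity mechanism, and you acknowledge as much when you call the alignment of the Morse-theoretic threshold with the binomial-sum formula for $B_q(d)$ ``the delicate step.'' This is precisely why the paper's nonvanishing results are one-sided: exhibiting a single Koszul cycle per entry is local, whereas vanishing requires controlling all cycles over all fine degrees simultaneously.

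Two further points you should address if you pursue this. First, for $d$ odd the generating set of the semigroup includes $y^d$, a generator of coarse degree $2$; a face of $\Delta_{\mathbf c}$ containing the corresponding vertex contributes to a different row than its cardinality alone suggests (this is the ``bumping'' phenomenon the paper illustrates in Section 6), so the clean identification ``row $q$ $\leftrightarrow$ fine degrees $\mathbf c$ with $\deg\mathbf c=(i+q)d$, homological degree $i$'' must be handled with the degree-$2$ vertex explicitly accounted for. Second, your outline only treats the tail $i>B_1(d)$ of the first row; sharpness of the lower bounds $F_q(d)$ for $2\le q\le n+1$, where $F_q(d)$ grows like $d^{q-2}$ (or $d^{q-1}$ for $d$ even), is a genuinely separate vanishing problem that the duality trick does not reduce to the tail case of another row in any obvious way.
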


Since Ein and Lazarsfeld's original findings, the study of asymptotic syzygies has been very active, with work that extends their results \cite{bruce2020quantitativebehaviorasymptoticsyzygies, conca2014asymptoticsyzygiesstanleyreisnerrings, park2023asymptoticnonvanishingsyzygiesalgebraic, Raicu_2016, zhou2014effectivenonvanishingasymptoticadjoint, erman2025largealgebraicbettinumbers}, establishes computational and experimental methods \cite{castryck2016computinggradedbettitables, bruce2021syzygiesmathbbp1timesmathbbp1data, bruce2017conjecturescomputationsveronesesyzygies}, and performs syzygy analysis via probability based models \cite{banerjee2021edgeidealserdosrenyirandom, booms2021characteristicdependencesyzygiesrandom, engström2023regularityedgeideals, dochtermann2023randomsubcomplexesbettinumbers, ErmanYang_2018}.

This article follows in the footsteps of a wide range of previous work that aims to extend classical results to the coordinate rings of other toric varieties; namely, nonstandard and multigraded polynomial rings. As will be discussed in Section \ref{section:regularity}, Benson \cite{Benson2004} extended the notion of Castelnuovo-Mumford regularity to the nonstandard graded setting, which played a major role in Symonds’s results on invariant theory in positive characteristic \cite{Symonds2011}. Maclagan and Smith \cite{MaclaganSmith2004} developed an understanding of regularity for multigraded rings, which has been since been further by  \cite{BotbolChardin2017, BruceCrantonHellerSayrafi2021, BruceCrantonHellerSayrafi2022, BESVirtualRes, ChardinHolanda2022, ChardinNemati2020, SidmanVanTuylWang2006}. More broadly, the study of multigraded syzygies has been very active, with work related to Koszul properties \cite{BrownErman2024TateRes, DavisSobieska2025rationalnormalcurvesweighted, EisenbudErmanSchreyer2015}, curves \cite{beLinSyzOCurvesIWProjSp, BrownErman2024LinearStrands, Cobb2024}, truncations \cite{bePositivity2024, CrantonHeller2025, DavisMartinova2025koszulpropertytruncationsnonstandard}, virtual resolutions \cite{BerkeschKleinLoperYang2021, BrownErmanShortRes_2024, HanlonHicksLazarev2024, HaradaNowrooziVanTuyl2022, Yang2021}, and much more \cite{Kingsconjecturebirationalgeometry, BuseChardinNemati2022, BrownSayrafi2024}. Notably, \cite{bruce2019asymptoticsyzygiessettingsemiample} extends the EEL Method to the multigraded setting, by analyzing embeddings of products of projective spaces. 

An overarching theme is that extending results about syzygies from the standard to nonstandard graded setting often requires new methods and perspectives on the classical results. As we will show, extending the methods developed by \cite{EEL_2016} to weighted projective spaces involves overcoming three key obstacles: (1) unlike the original results, it will be necessary to consider multiple monomials for each row of the Betti table, requiring that we carefully track which Betti entries correspond to each monomial and show they overlap, (2) the size of the Betti table depends on the modulus class of $d$ with respect to the degrees of the variables, meaning that multiple cases must be considered in order to classify all Veronese embeddings for a single weighted projective space, and (3) individual monomials may yield nonzero blocks of Betti entries spanning multiple rows, which makes tracking the corresponding nonvanishing syzygies especially nuanced. We focus on weighted projective spaces of the form $\mathbb{P}(1^n,2)$ because this setting allows us to explore the novelties presented in (1) and (2) without the added complication of (3). 

This article is structured as follows: in Section \ref{section:eel}, we detail the original EEL Method and elaborate on the key differences arising from obstacle (1). In Section \ref{section:regularity}, we prove the regularity arguments necessary to bound the number of rows in Betti table for our setting and further outline obstacle (2). Section \ref{section:notation} sets notation for Section \ref{section:MainResults}, which contains our main results. Section \ref{section:MainResults} is partitioned into subsections, first discussing the $d$ odd case in detail through \ref{subsec:FrontResults}, \ref{subsec:BackResults}, and \ref{subsec:Overlap}, then applying the same methods for $d$ even in \ref{subsec:evenVerDeg}. Theorems \ref{Thrm:A} and \ref{thrm:preciseBoundsIntro} are shown in \ref{subsec:MainThrm}, but they heavily rely on results from the previous subsections. Lastly, in Section \ref{section:futureDir}, we explore obstacle (3) by describing some additional challenges that arise when working with different weighted projective spaces.

\subsection*{Acknowledgments}
I would like to express my sincere gratitude to Daniel Erman for his guidance, support, and invaluable feedback throughout this project. I also thank Maya Banks, John Cobb, Caitlin Davis, Jose Israel Rodriguez, and Aleksandra Sobieska for their mentorship and helpful comments. I am grateful to Christine Berkesch, Michael Brown, Juliette Bruce, Gregory Smith, Christin Sum, and many others for their insights and suggestions. I acknowledge the support from the National Science Foundation Grant DMS-2200469, as well as that from the math departments at the University of \Hawaii \ at \Manoa \ and the University of Wisconsin-Madison. Lastly, most explicit computations were performed with the aid of Macaulay2 \cite{M2}.

\section{The Ein-Erman-Lazarsfeld Syzygy Method}
\label{section:eel}
The details of the monomial syzygy method developed by Ein-Erman-Lazarsfeld \cite{EEL_2016} (EEL Method) will be crucial to the main results of this article, so we carefully explain them in this section. The EEL Method discusses Veronese embeddings of $\mathbb{P}^n$, so, for this section only, the corresponding polynomial rings will be standard graded. 

Let $R= k[x_0, \ldots, x_n]$ be a standard graded polynomial ring, so that $\Proj(R) = \mathbb{P}^n$. We consider the Veronese embedding $\varphi_d \colon \mathbb{P}^n \xrightarrow[]{|\mathcal{O}(d)|} \mathbb{P}^N$, and let $S=k[z_0, \ldots, z_N]$ be the polynomial ring corresponding to $\mathbb{P}^N$. Let $M := R^{(d)}$, viewed as an $S$-module. 

Throughout this article, we will use the standard Betti table notation. That is, for a graded $S$-module, $M$, we can construct its minimal free resolution, 
\[\mathcal{F}_\bullet : 0 \longleftarrow M \longleftarrow F_0 \longleftarrow F_1 \longleftarrow \cdots \longleftarrow F_i \longleftarrow \cdots, \] 
and $\beta_{i,i+j}^S(M)$ is the number of degree $i+j$ generators of $F_i$. Alternatively, one can compute a Betti entry by calculating the rank of a specific $\operatorname{Tor}$ group, as $\beta^S_{i,i+j}(M) = \rank_k \left(\operatorname{Tor}_i^S(M, k)_{i+j}\right)$. This equivalence holds for any $S$-module, but we will consider what happens for our specific choice of $M$. 

In this case, $\{x_0^d, \ldots, x_n^d\}$ is a regular sequence on $M$, and we can perform an Artinian reduction. Suppose we order the variables in $S$ so that $z_0, \ldots ,  z_n$ correspond to the elements of this sequence. Then, we can view $\overline{M} :=M/\langle x_0^d, \ldots, x_n^d\rangle$ as an $\overline{S}:= S/\langle z_0, \ldots, z_n \rangle$-module. Since $\{z_0, \ldots, z_n\}$ is a regular sequence, $\mathcal{F}_\bullet \otimes_S S$ has no higher homology, and it is a minimal free resolution of $M \otimes_S \overline{S} = \overline{M}$. In particular, $\beta^S(M) = \beta^{\overline{S}}(\overline{M})$. Therefore, we can instead compute individual Betti entries of $M$ over $S$ by considering $\operatorname{Tor}_i^{\overline{S}}(\overline{M}, k)_{i+j}$. We can explicitly construct these desired $\operatorname{Tor}$ groups as follows. 

First, we resolve $k$ as an $\overline{S}$-module via the Koszul complex: 
\[0 \longleftarrow k \longleftarrow \wedge^0 \overline{S}_1 \otimes_k \overline{S} \longleftarrow \wedge^1 \overline{S}_1 \otimes_k \overline{S}(-1) \longleftarrow \cdots \longleftarrow \wedge^ i \overline{S}_1 \otimes_k \overline{S}(-i) \longleftarrow \cdots.\]

Second, we apply $(- \otimes_k \overline{M})$ to the above complex:
\[0 \longleftarrow \overline{M} \longleftarrow \wedge^0 \overline{S}_1 \otimes_k \overline{M} \longleftarrow \wedge^1 \overline{S}_1 \otimes_k \overline{M}(-1) \longleftarrow \cdots \longleftarrow \wedge^ i \overline{S}_1 \otimes_k \overline{M}(-i) \longleftarrow \cdots.\]

Third, we find the degree $i+j$ component of the homology at $\wedge^i \overline{S}_1 \otimes \overline{M}(-i)$. Since $\overline{M}(-i)_{i+j} = \overline{M}_j$, we can equivalently compute the homology in the strand
\[\wedge^{i-1} \overline{S}_1\otimes \overline{M}_{j+1} \longleftarrow \wedge^i \overline{S}_1\otimes \overline{M}_j \longleftarrow \wedge^{i+1} \overline{S}_1 \otimes \overline{M}_{j-1}.\]

In other words, showing $\beta_{i,i+j} \neq 0$ amounts to finding a nonzero homology element in the above strand. The following lemma, which is adapted from Lemma 2.3 of \cite{EEL_2016}, gives a concrete way of finding such an element. 
\begin{lemma}
\label{Lem:EEL}
    Recall each generator $z_i$ of $\overline{S}$ corresponds to a specific degree $d$ monomial $m_i$ in $\overline{R}$. Let $m$ be a monomial in $\overline{M}$, and let $D(m) \subseteq \{z_{n+1}, \ldots , z_N\}$ denote the subset such that the corresponding $m_i$ divide $m$. Let $A(m) \subseteq \{ z_{n+1}, \ldots , z_N\}$ denote the subset such that the corresponding $m_i$ annihilate $m$ in $\overline{M}$. If $D(m) \subseteq A(m)$, then we have 
    \[\beta_{i, i+\deg(m)}^{\overline{S}} (\overline{M}) \neq 0 \text{ for all } |D(m)| \leq i \leq |A(m)|. \]
\end{lemma}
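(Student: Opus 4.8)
The plan is to exhibit, for each $i$ in the asserted range, an explicit nonzero homology class in the strand
\[\wedge^{i-1} \overline{S}_1\otimes \overline{M}_{j+1} \longleftarrow \wedge^i \overline{S}_1\otimes \overline{M}_j \longleftarrow \wedge^{i+1} \overline{S}_1 \otimes \overline{M}_{j-1},\]
where $j = \deg(m)$; producing a single nonzero class shows $\operatorname{Tor}_i^{\overline{S}}(\overline{M}, k)_{i+j}$ has positive rank, hence $\beta_{i,i+\deg(m)}^{\overline{S}}(\overline{M}) \neq 0$. I would fix any subset $I \subseteq \{z_{n+1}, \ldots, z_N\}$ with $D(m) \subseteq I \subseteq A(m)$ and $|I| = i$; such an $I$ exists precisely when $|D(m)| \leq i \leq |A(m)|$. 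Writing $e_I$ for the wedge of the variables in $I$, the candidate class is $e_I \otimes m$.

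First I would verify that $e_I \otimes m$ is a cycle. The Koszul differential acts by $\partial(e_I \otimes m) = \sum_{z \in I} \pm\, e_{I \setminus z} \otimes (z \cdot m)$, where $z \cdot m$ denotes the module action, namely multiplication by the degree-$d$ monomial $m_z$ corresponding to $z$ followed by reduction modulo $\langle x_0^d, \ldots, x_n^d\rangle$. Since $I \subseteq A(m)$, every $z \in I$ annihilates $m$, so each summand vanishes and $\partial(e_I \otimes m) = 0$.

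The main obstacle is showing that $e_I \otimes m$ is not a boundary, which is exactly where the hypothesis $D(m) \subseteq I$ enters. Working in the basis of $\wedge^i \overline{S}_1 \otimes \overline{M}_j$ indexed by pairs (size-$i$ subset, monomial), I would track the coefficient of the basis element $e_I \otimes m$ inside an arbitrary boundary $\partial(e_J \otimes m')$, where $|J| = i+1$ and $m' \in \overline{M}_{j-1}$ is a monomial. A summand $\pm e_{J \setminus z} \otimes (z \cdot m')$ equals $e_I \otimes m$ only if $J \setminus z = I$ and $z \cdot m' = m$; the latter forces $m = m_z m'$ in $\overline{M}$, so $m_z$ divides $m$ and therefore $z \in D(m)$. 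But $z = J \setminus I \notin I$, which contradicts $D(m) \subseteq I$. Hence no generator maps onto $e_I \otimes m$, the coefficient of $e_I \otimes m$ in every boundary is zero, and $e_I \otimes m$ represents a nonzero homology class.

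Since this argument applies verbatim to every $I$ with $D(m) \subseteq I \subseteq A(m)$, it yields nonvanishing of $\beta_{i,i+\deg(m)}^{\overline{S}}(\overline{M})$ for all $|D(m)| \leq i \leq |A(m)|$, as claimed. One point requiring care throughout is the bookkeeping of the module action against the Artinian reduction: I should confirm that the monomial $m' = m/m_z$ genuinely survives in $\overline{M}_{j-1}$ (it does, since it divides the surviving monomial $m$ and so cannot be divisible by any $x_l^d$), which is what makes the correspondence ``the term $e_I \otimes m$ arises $\iff z \in D(m)$'' an exact equivalence rather than a one-sided implication. This adapts Lemma 2.3 of \cite{EEL_2016} to the present Artinian-reduced Veronese module.
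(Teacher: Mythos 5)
Your proposal is correct and follows essentially the same route as the paper: exhibiting $e_I\otimes m$ as a Koszul cycle (using $I\subseteq A(m)$) and ruling it out as a boundary via the divisibility condition $D(m)\subseteq I$. In fact, your coefficient-tracking argument for the non-boundary step is a complete version of what the paper only sketches for a single Betti entry before deferring to \cite[Lemma 2.3]{EEL_2016}, and your remark that $m'=m/m_z$ survives the Artinian reduction correctly fills in the one bookkeeping point the sketch glosses over.
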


Consider the following example. 

\begin{example}
\label{ex:EEL}
    Let $R=k[x_0, x_1, x_2]$ and consider $M = R^{(3)} = k[x_0^3, \;x_1^3, \; x_2^3, \;x_0^2x_1, \ldots, x_1x_2^2]$, viewed as an $S = k[z_0, \ldots, z_9]$-module (where $z_0, z_1, z_2$ correspond to the pure powers). We perform an Artinian reduction by the regular sequence $\{x_0^3, x_1^3, x_2^3\}$ to get the corresponding $\overline{M}$ and $\overline{S}$:
    \[\overline{M} = k[x_0^2x_1, \; x_0^2x_2,\; x_0x_1^2, \;x_0x_1x_2, \;x_0x_2^2,\; x_1^2x_2,\; x_1x_2^2],\]
    \[\overline{S} = k[z_3,\; z_4,\; z_5,\; z_6,\; z_7, \; z_8, \;z_9].\]

     We consider the monomial $m = x_0^2x_1$ and apply Lemma \ref{Lem:EEL}. Notice that $M$ inherits its grading from $S$, so elements of $\overline{M}_i$ are the elements of degree $i \cdot d$ in $R$, and $m \in \overline{M}_1$. Thus, if $m$ satisfies the necessary conditions, it will yield nonzero entries in row 1 of the Betti table. Following the notation of the lemma,
    \[D(m) = \{z_3\} \implies |D(m)| = 1\]
    \[A(m) = \{z_3, z_4, z_5, z_6, z_7, z_8\} \implies |A(m)| = 6.\]
    Since $D(m) \subseteq A(m)$, Lemma \ref{Lem:EEL} concludes that $\beta_{1,2}, \; \beta_{2,3},\; \beta_{3,4}, \; \beta_{4.5}, \; \beta_{5,6}$ and $ \beta_{6,7}$ are all nonzero. 

    This example is small enough that a computational software (such as Macaulay2) is able to compute the Betti table, which is given below.
    \begin{center}
        \begin{tabular}{ c | c c c c c c c c c  } 
         & 0 & 1 & 2 & 3 & 4 & 5 & 6 & 7  \\
        \hline
        0 & 1 & - & - & - & - & - & -& - \\ 
        1 & - & 27 & 105 & 189 & 189 & 105 & 27 & -   \\ 
        2 & - & - & - & - & - & - & - & 1  
        \end{tabular} 
    \end{center}
    Notice that syzygy analysis on the monomial $x_0^2x_1$ was sufficient to correctly locate all nonzero entries in the first row.     For the second row, one could take $m$ to be $x_0^2x_1^2x_2^2$ and perform a similar analysis to find that $\beta_{7,9} \neq 0$. Lemma \ref{Lem:EEL} does not guarantee that these are the only nonzero entries in a given row, however Ein-Erman-Lazarsfeld conjecture that all Betti entries outside of the cited range for a well-chosen monomial vanish, as is witnessed in this example. 
    \end{example}

    In lieu of a complete proof of Lemma \ref{Lem:EEL} (which can be found in \cite[Lemma 2.3]{EEL_2016}) we sketch the key components in the setting of Example \ref{ex:EEL}.
    \begin{example}
    \label{ex:EEL2}
        Let $\overline{M}$ and $\overline{S}$ as in the previous example. In order to conclude $\beta_{1,2} \neq 0$, it is sufficient to show the following strand has a nonzero homology element:
         \[\wedge^{0} \overline{S}_1\otimes \overline{M}_{2} \xleftarrow {\;\;\; \phi\;\;} \wedge^1 \overline{S}_1\otimes \overline{M}_1 \xleftarrow {\;\;\; \psi\;\;} \wedge^2 \overline{S}_1 \otimes \overline{M}_{0}.\]
        We claim that $z_1 \otimes x_0^2x_1 \in \wedge^1 \overline{S}_1 \otimes \overline{M}_1$ is in $\ker(\phi) / \operatorname{im}(\psi)$. 
    
        First, we see that 
        \[\phi(z_1 \otimes x_0^2x_1) = 1 \otimes x_0^4x_1^2 = 1 \otimes 0 \in \wedge^0 \overline{S}_1 \otimes \overline{M}_2 \implies z_1 \otimes x_0^2x_1 \in \ker(\phi).\]

        Next, suppose there was some element $(\ell_1 \wedge \ell_2)\otimes 1 \in \wedge^2 \overline{S}_1 \otimes \overline{M}_0$ such that $\psi\left((\ell_1 \wedge \ell_2)\otimes 1\right) = z_1 \otimes x_0^2x_1$. By definition, 
        \[\psi\left((\ell_1 \wedge \ell_2)\otimes 1\right) = \ell_1 \otimes (\ell_2)_{\overline{M}} + \ell_2 \otimes (\ell_1)_{\overline{M}} = z_1 \otimes x_0^2x_1,\] where $(\ell_i)_{\overline{M}}$ denotes the element of $\overline{M}$ that corresponds to $\ell_i$.
    
        However, $z_1$ is the element of $\overline{S}$ that corresponds to $x_0^2x_1$, so the above chain of equalities implies that $\ell_1$ must equal $\ell_2$. Moreover, 
        \[\ell_1 = \ell_2 \implies \ell_1 \wedge \ell_2 = 0 \text{ so that } (\ell_1 \wedge \ell_2) \otimes 1 = 0 \otimes 1 \not\in \wedge^2\overline{S}_1 \otimes \overline{M}_0.\] Thus, there is no such element mapping to $z_1 \otimes x_0^2x_1$, and $z_1 \otimes x_0^2x_1 \not\in \operatorname{im}(\psi)$.

        Combining both statements, we can conclude that $z_1 \otimes x_o^2x_1 \in \operatorname{Tor}^{\overline{S}}_1( \overline{M},k)_{2}$, meaning that $\beta_{1,2} \neq 0.$

        \end{example}

    The argument showing an arbitrary $\beta_{i,j}$ is nonzero follows a similar structure. In general, an element $(z_{a_1} \wedge \ldots \wedge z_{a_i}) \otimes m \in \wedge^{i}\overline{S_1} \otimes M_{j}$ will be in $\ker(\phi)$ (where $\phi$ is the map $\wedge^{i-1} \overline{S}_1 \otimes \overline{M}_{j+1} \longleftarrow \wedge^{i}\overline{S}_1 \otimes \overline{M}_{j}$) if the elements corresponding to $z_{a_1}, \ldots, z_{a_n}$ annihilate $m$ in $\overline{M}$. 
    
    Similarly, we can always arrive at a contradiction that shows $(z_{a_1} \wedge \ldots \wedge z_{a_i}) \otimes m \notin \operatorname{im}(\psi)$ (where $\psi$ is the map $\wedge^{i} \overline{S}_1 \otimes \overline{M}_{j} \longleftarrow \wedge^{i+1}\overline{S}_1 \otimes \overline{M}_{j-1}$) if all the $z_i \in \overline{S}$ that correspond to divisors of $m$ appear in the wedge product. This condition is stronger than what's needed, but it provides a simple condition that can be checked for individual monomials. 

    The two conditions in the above example give a straightforward method for finding nonzero elements of the Betti table: continuing with the notation in Lemma \ref{Lem:EEL}, if we can find an element $z_{a_1} \wedge \ldots \wedge z_{a_{i}} \otimes m \in \wedge^i \overline{S}_1 \otimes M_{j}$ so that $D(m) \subseteq \{z_{a_1}, \ldots, z_{a_{i}} \} \subseteq A(m)$, then $\beta_{i,i+j} \neq 0$. 
    
    For the setting of $\mathbb{P}^n$, Ein-Erman-Lazarsfeld choose an $m$ with $D(m) \subseteq A(m)$ and construct $z_{a_1} \wedge\ldots \wedge z_{a_i} \otimes m$ such that $\{z_{a_1}, \ldots , z_{a_{i}}\} = D(m)$. This yields that $\beta_{|D(m)|, |D(m)| + \deg(m)} \neq 0$. By subsequently adding the elements of $A(m) \setminus D(m)$ to the wedge product, they get successive nonzero Betti entries through $\beta_{|A(m)|, |A(m)| + \deg(m)}$, as in the statement of Lemma \ref{Lem:EEL}.

\subsection{Extending The EEL Method to Weighted Projective Spaces}\hfill
\label{subsec:extendingEEL}

The method outlined above remains largely unchanged in the weighted projective setting; however, the key details involved in executing the basic idea need to be modified. Even in the standard graded setting, it is not a priori clear how many monomials are necessary in the analysis of a given row, or which the best choices are. In \cite{EEL_2016}, the authors choose the lex-leading monomial in each degree to find nonzero Betti entries in the corresponding row. In the nonstandard graded setting, the degrees of the variables are no longer symmetric, and it is unclear which variable to optimize for when selecting a lex ordering. For example, in the setting of $\mathbb{P}(1,1,2)$, should we take the exponent on the degree 2 variable to be as large as possible or as small as possible? The answer is that we need to consider both! This is the first new obstacle of working in the weighted projective setting: row-by-row syzygy analysis is still possible, but may require multiple monomials per row.

\begin{example}
    Consider the 5th Veronese of the ring $R=k[x_0,x_1,y]$ corresponding to $\mathbb{P}(1,1,2)$, with all other notation as defined in the previous examples. Suppose we wish to find nonvanishing syzygies in the second row of the Betti table. 

    First, we optimize for the $x_i$'s and let $m_1 = x_0^4x_1^4y \in \overline{M}_2$. $\overline{S}$ has 10 variables, 8 of which yield divisors of $m_1$ (all but the $z_i$ corresponding to $x_0y^2$ and $x_1y^2)$, so that $|D(m_1)| = 8$. All 10 annihilate $m_1$ in $\overline{M}$, so $D(m_1) \subseteq A(m_1)$, and $|A(m_1)| = 10$. Therefore, the same arguments outlined in Example \ref{ex:EEL2} show $\beta_{8, 10}$, $\beta_{9,11}$, and $\beta_{10,12}$ must all be nonzero. 

    Next, we optimize for $y$ and let $m_2 = x_0^2y^4 \in \overline{M}_2$. The only $z_i \in \overline{S}$ that yields a divisor of $m_2$ is the one corresponding to $x_0y^2$, so $|D(m_2)| = 1$. Since $x_0y^2$ annihilates $m$ in $\overline{M}$, $D(m) \subseteq A(m)$. The generators corresponding to $x_0^2x_1^3$ and $x_0x_1^4$ don't annihilate $m_2$ in $\overline{M}$, but the rest do, so $|A(m_2)| = 8$. This choice of monomial allows us to conclude that the Betti entries $\beta_{1, 3}, \; \beta_{2, 4} ,\ldots , \;\beta_{8, 10}$ are all nonzero. 
    
    This example is again small enough that Macaulay2 can compute the Betti table, given below. 
    \begin{table}[h]
    \begin{center}
        \begin{tabular}{ c | c c c c c c c c c c c c } 
         & 0 & 1 & 2 & 3 & 4 & 5 & 6 & 7 & 8 & 9 & 10  \\
        \hline
        0 & 1 & - & - & - & - & - & -& - & -& - & - \\ 
        1 & - & 43 & 222 & 558 & 840 & 798 & 468 & 147 & 8 & - & -  \\ 
        2 & - & 10 & 88 & 342 & 768 & 1092 & 1008 & 588 & 201 & 20 & 1 \\ 
        3 & - & - & - & - & - & -& -& -& -& 9 & 2
        \end{tabular}
    \caption{\label{tab:BettiTable} The Betti table of the $5$-th Veronese embedding of $\mathbb{P}(1,1,2)$.}
    \end{center}
    \end{table}
    
    For this example, we similarly see that the EEL Method correctly locates all of the nonzero entries in the second row; however, it was necessary to consider both $m_1$ and $m_2$ in order to do so! 
\end{example}

Crucially, the sets of nonzero Betti entries corresponding to $m_1$ and $m_2$ in the above example are distinct, but overlapping. In Section \ref{section:MainResults}, we will similarly construct $m_1$ and $m_2$ for arbitrary $n$, $d$ and row index $q$, find the formulas for their corresponding blocks of nonzero Betti entries, and show they overlap for $d \gg 0$.

\section{Regularity Computations for Veronese Embeddings of General Weighted Projective Spaces}
\label{section:regularity}

An important homological invariant of a module is the Castelnuovo-Mumford regularity. The following definition is phrased as in \cite[Definition 1.3]{BrownErmanPositivity_2023}, but the initial definition dates back to \cite[~\S 5]{Benson2004}. 
\begin{definition}
\label{def:reg}
    Let $M$ be a module over some (possibly nonstandard) graded polynomial ring $S$, and let $\mathfrak{m}$ denote the maximal ideal of $S$ given by its variables. We say that $M$ is \textit{(weighted) $r$-regular} if $H^i_{\mathfrak{m}}(M)_j = 0$ for all $i \geq 0$ and $j > r-i$. The \textit{(weighted) Castelnuovo-Mumford regularity of M}, $\reg(M)$, is the smallest integer $r$ for which $M$ is $r$-regular. 
\end{definition}

For the purposes of this article, we will refer to the weighted Castelnuovo-Mumford regularity simply as the regularity of a module. 

In this section, we wish to explicitly compute the regularity of the $d$th Veronese of any weighted projective space. We need to be a bit careful when defining the degree $d$ Veronese embedding of a weighted projective space; as was noted in the introduction, $\mathcal{O}(d)$ may not be very ample for an arbitrary $\mathbb{P}(a_0, \ldots, a_n)$, and so the Veronese may not be given by a complete linear series of a line bundle in this setting. Instead, we utilize the traditional algebraic definition to define the degree $d$ Veronese of a weighted projective space.

\begin{definition}
\label{def:weightedVer}
    Let $\mathbb{P}(a_0, \ldots, a_n)$ be some weighted projective space. Let $R= k[x_0, \ldots, x_n]$, where $\deg(x_i) = a_i$. We define the degree $d$ Veronese of $\mathbb{P}(a_0, \ldots, a_n)$ as the degree $d$ Veronese subring of $R$,
    \[R^{(d)} := \bigoplus_{e\geq 0} R_{ed}.\]
\end{definition}

Therefore, for $S$ a graded polynomial ring with a minimal surjection $S \to R^{(d)}$, we can view $R^{(d)}$ as an $S$-module, where the action is given by the map. Let $M=R^{(d)}$, viewed as an $S$-module. We wish to compute $\reg(M)$. Since we are interested in the asymptotic syzygies of $M$, it suffices to classify $\reg(M)$ for $d \gg 0$.

\begin{lemma}
\label{lem:regularity}
    For $M$ as defined above, if $d \geq \sum_{i=0}^n a_i$, then $\reg(M) = n$. 
\end{lemma}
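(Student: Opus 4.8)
The plan is to compute the local cohomology of $M$ directly and read off the regularity from its top nonvanishing degree, exploiting the fact that passing to the $d$th Veronese is compatible with local cohomology. First I would reduce everything to $R$. The variables of $S$ map onto the degree-$d$ piece $M_1 = R_d$, and the degree-$d$ monomials of $R$ — which include the pure powers $x_i^d$ — have $\frakm_R$ as their radical; hence $\frakm M$ is irrelevant-primary in $M = R^{(d)}$. By independence of base for local cohomology, $H^i_{\frakm}(M)$ agrees with the local cohomology of $R^{(d)}$ taken with respect to its own irrelevant ideal. Since the Veronese functor $(-)^{(d)}$ is exact it commutes with local cohomology, yielding an identification $H^i_{\frakm}(M)_j \cong H^i_{\frakm_R}(R)_{jd}$, where the internal degree on the right is rescaled by $d$.

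Next I would use that $R = k[x_0,\dots,x_n]$ is a (nonstandard graded) polynomial ring, hence regular and Cohen--Macaulay of dimension $n+1$. Therefore $H^i_{\frakm_R}(R) = 0$ for every $i \neq n+1$, the same vanishing passes to $M$, and consequently $\reg(M) = a + (n+1)$, where $a := \max\{\, j : H^{n+1}_{\frakm}(M)_j \neq 0 \,\}$ is the top degree of the single surviving module. It then suffices to prove $a = -1$. To locate the nonvanishing degrees I would invoke graded local duality: the canonical module of the weighted polynomial ring is $\omega_R = R(-A)$ with $A := \sum_{j=0}^n a_j$, so $H^{n+1}_{\frakm_R}(R)_e \cong (R_{-e-A})^\ast$. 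Combined with the Veronese identification this gives $H^{n+1}_{\frakm}(M)_j \cong (R_{-jd-A})^\ast$, which is nonzero exactly when $R$ possesses a monomial of degree $-jd-A$.

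The remaining step — and the only genuine obstacle — is to pin down the largest such $j$. For every $j \geq 0$ the degree $-jd-A$ is negative, so $R_{-jd-A} = 0$; this forces $a \leq -1$ with no hypothesis whatsoever. For $j = -1$ the relevant degree is $d - A$, which is $\geq 0$ precisely because of the hypothesis $d \geq \sum_{j=0}^n a_j$. The crux is therefore to verify $R_{d-A} \neq 0$, i.e. that $d - A$ lies in the numerical semigroup generated by $a_0,\dots,a_n$. For $\mathbb{P}(1^n,2)$ this is immediate: the weight-one variables make \emph{every} nonnegative integer an achievable degree, so $R_{d-A} \neq 0$, whence $a = -1$ and $\reg(M) = n$.

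I expect the bookkeeping in the Veronese and local-duality identifications to be routine, with essentially all of the content concentrated in the final semigroup-membership check. It is worth emphasizing that this last representability step is exactly where the arithmetic of the weights enters: the presence of unit weights in $\mathbb{P}(1^n,2)$ disposes of it cleanly (giving $R_{d-A}\neq 0$ for all $d \geq A$), which is why the equality $\reg(M) = n$ — rather than merely the inequality $\reg(M) \leq n$ that the first half of the argument always delivers — holds so transparently in this family.
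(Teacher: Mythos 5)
Your argument is, at its core, the paper's own proof in different packaging: both reduce to the single nonvanishing local cohomology module $H^{n+1}_{\frakm}(M)$ and identify its degree-$j$ piece with data about $R$ in degree $-jd-\sum_j a_j$ --- the paper via $H^{n+1}_{\frakm}(M)_j \cong H^n\bigl(\Proj(R),\mathcal{O}(jd)\bigr)$ together with the recalled criterion that this is nonzero iff $jd \leq -\sum_j a_j$, you via graded local duality $H^{n+1}_{\frakm}(M)_j \cong (R_{-jd-\sum_j a_j})^{\ast}$, which is the same statement. The one substantive divergence is at the endpoint $j=-1$: the paper's criterion silently assumes that $e \leq -\sum_j a_j$ forces nonvanishing, i.e.\ that $-e-\sum_j a_j$ is actually a realized degree of $R$, whereas you correctly isolate this semigroup-membership condition as the crux --- but then you verify it only for $\mathbb{P}(1^n,2)$, while Lemma \ref{lem:regularity} is stated for a general $\mathbb{P}(a_0,\dots,a_n)$ and is invoked, e.g., in Example \ref{ex:P(1,1,3)Reg} for $\mathbb{P}(1,1,3)$. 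You should close that gap explicitly: the check is immediate whenever some weight equals $1$ (which covers every case the paper actually uses), but it is not vacuous in general --- for instance for $\mathbb{P}(2,3,5)$ with $d=11$ one has $d-\sum_j a_j=1$ outside the numerical semigroup, your $j=-1$ term vanishes, and the regularity drops below $n$ --- so your observation is not mere pedantry; it identifies a hypothesis the paper's ``recall'' elides. One small slip worth fixing in the reduction step: the pure powers $x_i^d$ have degree $d\,a_i$, not $d$, so they are not degree-$d$ monomials of $R$ in general; what you actually need (and what holds) is that the variables $z_0,\dots,z_n$ of $S$ act on $M$ by these pure powers, so $\frakm_S M$ is irrelevant-primary in $R^{(d)}$.
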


\begin{proof}
    By Definition \ref{def:reg}, $M$ is $r$-regular if $H^i_\mathfrak{m}(M)_j = 0$ for all $i \geq 0$ and $j > r-i$. Since $M$ is Cohen-Macaulay of dimension $n+1$, its only nonzero local cohomology module is $H^{n+1}_\mathfrak{m}(M)$. Thus, for our setting, $M$ is $r$-regular if $H^{n+1}_\mathfrak{m}(M)_j = 0$ for all $j > r-n-1$.

    Note that there is an equivalence between local cohomology modules and sheaf cohomology modules given by $H^i_m(M)_j \cong H^{i-1}\left(\Proj(S), \widetilde{M}(j)\right)$ for all $i > 1$, so that showing $H^{n+1}_\mathfrak{m}(M)_j = 0$ is equivalent to showing $H^n\left(\Proj(S), \widetilde{M}(j) \right) = 0$. Moreover, it suffices to show that $r = n$ is the smallest value of for which $H^n\left(\Proj(S), \widetilde{M}(j)\right) = 0$ for all $j > r-n-1$, or, equivalently, that $H^n(\Proj(S), \widetilde{M}(j))=0$ for all $j > -1$ and $H^n(\Proj(S), \widetilde{M}(-1)) \neq 0$.

    First, observe that 
    \[H^n\left(\Proj(S), \widetilde{M}(j)\right) = H^n\left(\Proj(S), i_*\mathcal{O}_{\Proj(R)}(j)\right) = H^n\left(\Proj(R), \mathcal{O}_{\Proj(R)}(d\cdot j)\right).\] 
    
    Recall that 
    \[H^n\left(\Proj(R), \mathcal{O}_{\Proj(R)}(e)\right) \neq 0 \iff e \leq - \sum_{i=0}^n a_i,\] where the $a_i$ are the degrees of the variables in $R$. If $d \geq \sum_{i=0}^n a_i$ and $j \leq -1$ (equivalently, $r < n$), then 
    \[(r-n) \leq -1  \implies d\cdot j \leq -d \leq -\sum_{i=0}^n a_i,\]
    and the above condition yields that $H^n\left(\Proj(R), \mathcal{O}_{\Proj(R)}(d\cdot -1)\right) = H^n\left( \Proj(S), \widetilde{M}(-1)\right)\neq 0$. However, when $j > 0$ (equivalently, $r \geq n)$, we have 
    \[d\cdot j \geq 0 \geq -\sum_{j=0}^n a_j \implies H^n\left( \Proj(S), \widetilde{M}(j)\right) = 0.\]
    Thus, when $d\geq \sum_{i=0}^n a_i$, $n$ is the smallest value of $r$ for which $M$ is $r$-regular, meaning that $\reg(M) = n$, as claimed. 
\end{proof}

In the standard graded setting, the Castelnuovo-Mumford regularity corresponds to the index of the bottom-most nonzero row of $\beta^S(M)$, so that $\reg(M) = r$ means that $r$ is the largest integer with $\beta_{i, i + r} \neq 0$ for any $i$. When we move to the nonstandard graded setting, we need to make a slight adjustment: $\reg(M) = r$ means that the bottom-most nonzero row of the Betti table is $r + \sigma(S)$ where $\sigma(S)$ is the Symonds' constant, as defined in \cite[Proposition 1.2]{Symonds2011}. In particular, for $S = k[z_0, \ldots , z_{N}]$ with $\deg(z_i) = b_i$, $\sigma(S) = \sum_{i=0}^{N} (b_i - 1)$. Note that when $S$ happens to be standard graded, $\sigma(S) = 0$, so this formula states the bottom-most row of the Betti table is $r$, aligning with the standard graded results. 

For our purposes, the take-away is that the index bottom-most row of the Betti table depends on the degrees of the variables of $S$ in addition to $\reg(M)$. 

\begin{example}
    Consider the weighted projective space $\mathbb{P}(1,1,2)$ and let $d =5$. This choice of $d$ satisfies the statement of Lemma \ref{lem:regularity} in this setting as $\sum_{i=0}^n a_i = 4$. 
    
    Let $R = k[x_0, x_1, y]$ be the corresponding nonstandard graded polynomial ring. Then, 
    \[R^{(5)} = k[x_0^5, \; x_0^{4}x_1, \ldots, x_1^5, \; x_0^{3}y,\; x_0^{1}y^2, \; x_1^{3}y, \; x_1y^2,\; x_0^2x_1y, \; x_0x_1^2y,\; y^5].\]
    Note that, for this setup, the only $k$-algebra generator that is not in $R_5$ is $y^5 \in R_{10}$. 

    Let $S = k[z_0, \ldots, z_{12}]$ be the polynomial ring corresponding to the image of the Veronese embedding (so that each $z_i$ corresponds to a generator of $R^{(5)}$ in the above order). Since we rescale the grading by $5$ in $S$, $\deg(z_i) = 1$ for $0 \leq i \leq 11$, and $\deg(z_{12}) = 2$. Thus, \cite{Symonds2011} shows that  
    \[\sigma(S) = \sum_{i = 0}^{12} \left(\deg(z_i) - 1\right) = 1, \] 
    and, since $n=2$ for this example, $\beta^S(M)$ extends through row $\reg(M)+ \sigma(S) = 2+1 = 3$. The Betti table for this example is given as Table \ref{tab:BettiTable}, which verifies that we have correctly found the number of rows. 
\end{example}

In the above example, we chose $\mathbb{P}(1,1,2)$ with $d = 5$ in order to simplify the notation. However, if we had taken a different projective space of the form $\mathbb{P}(1^n,2)$ and any $d$ odd, the resulting $M$ would still have exactly one degree $2$ generator, $y^d$, which we prove below. We state the below lemma in terms of $R$ to make the graded pieces clear, but the statement on $M$ is the same with the grading rescaled. 

\begin{lemma}
\label{lem:dOddgens}
    Let $R=k[x_0, \ldots, x_{n-1},y]$ be the nonstandard graded polynomial ring with $\deg(x_i) = 1$ and $\deg(y) = 2$. If $d \gg 0$ is odd, then $R^{(d)}$ has exactly one $k$-algebra generator of degree $2d$, $y^d$, and the remaining generators have degree $d$.
\end{lemma}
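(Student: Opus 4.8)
The plan is to determine the minimal algebra generators of the Veronese subring $R^{(d)} = \bigoplus_{k \ge 0} R_{kd}$ by a direct monomial analysis. A positive-degree monomial of $R^{(d)}$ fails to be a minimal generator exactly when it lies in $(R^{(d)}_+)^2$, i.e. when it factors as a product of two positive-degree monomials of $R^{(d)}$; to certify that a given monomial is \emph{not} a minimal generator it therefore suffices to exhibit one such factorization. Since the least positive degree occurring in $R^{(d)}$ is $d$, every degree-$d$ monomial is automatically a minimal generator, and these account for the ``remaining generators'' of degree $d$ in the statement. The remaining work is to show that among the monomials of degree $kd$ with $k \ge 2$, the only one admitting no nontrivial factorization is $y^d$.

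I would split the higher-degree monomials according to whether they are divisible by some $x_i$. Let $m = x_0^{a_0}\cdots x_{n-1}^{a_{n-1}} y^b$ have degree $kd$ with $k \ge 2$, and set $A = a_0 + \cdots + a_{n-1}$. If $A \ge 1$, I claim $m$ has a divisor of degree exactly $d$: for any target $t$ with $0 \le t \le \deg(m)$ one takes the $y$-part to be $y^{b'}$ with $b' = \min(b,\lfloor t/2\rfloor)$ and then gives the $x$-part degree $t - 2b' \in \{0,\dots,A\}$, which is realizable precisely because having $A \ge 1$ lets the attainable $x$-degrees run through consecutive integers. Applying this with $t = d$ produces a degree-$d$ divisor $e \mid m$, so $m = e \cdot (m/e)$ is a factorization into two positive-degree elements of $R^{(d)}$ and $m$ is not a minimal generator.

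The decisive case, in which the oddness of $d$ is used, is that of a pure power $m = y^b$. Such a monomial lies in $R^{(d)}$ iff $d \mid 2b$, and since $d$ is odd we have $\gcd(d,2)=1$, so this is equivalent to $d \mid b$; hence $y^b = (y^d)^{b/d}$ is a power of $y^d$. Therefore $y^d$, of degree $2d$, is itself a minimal generator: its proper divisors are the powers $y^c$ with $0 \le c < d$, of even degree $2c$, and since $d$ is odd none of these has degree a positive multiple of $d$, so $y^d$ admits no factorization within $R^{(d)}$. Meanwhile every pure power $y^{jd}$ with $j \ge 2$ factors as $y^d \cdot y^{(j-1)d}$ and is not minimal. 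Assembling the three cases shows that the minimal generators of $R^{(d)}$ are exactly the degree-$d$ monomials together with the single degree-$2d$ monomial $y^d$. I expect the pure-power case to be the only genuinely delicate point --- it is exactly where oddness of $d$ prevents a factorization $y^d = (y^{d/2})^2$ --- whereas the divisor-filling argument for the mixed case and the minimality of degree-$d$ monomials are routine.
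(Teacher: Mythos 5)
Your proof is correct, and it reaches the conclusion by the same elementary route as the paper --- a direct monomial factorization argument whose decisive point is that oddness of $d$ prevents any pure power of $y$ from landing in $R_d$, so $y^d$ admits no factorization inside $R^{(d)}$. The organizational difference is worth noting: the paper treats $R_{2d}$ by symmetrically splitting a monomial $\overline{x}^{\overline{a}}y^b$ (with $\overline{a}\neq\overline{0}$) into \emph{two} degree-$d$ factors, with a case split on the parity of $b$, and then handles $R_{id}$ for $i\geq 3$ by a separate induction that factors off either $y^d$ or a degree-$d$ power product of the $x_i$. You instead handle all degrees $kd$ with $k\geq 2$ uniformly: any monomial involving some $x_i$ has a divisor of degree exactly $d$ (your ``filling'' argument, which checks out --- with $b'=\min(b,\lfloor d/2\rfloor)$ the required $x$-degree $d-2b'$ lies in $\{0,\dots,A\}$ precisely because $A\geq 1$ covers the odd-$d$ remainder and $A=kd-2b\geq 2d-2b$ covers the other case), and the cofactor automatically has degree $(k-1)d$ and hence lies in $R^{(d)}_+$. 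This buys you a single argument in place of the paper's two-stage case analysis and induction, at the cost of having to justify the divisor-existence claim; the paper's version is more explicit but more repetitive. Either way the pure-power case is handled identically.
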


\begin{proof}
    As in Definition \ref{def:weightedVer}, we let $R^{(d)} = \oplus_{e= 0}^\infty R_{ed}$. Monomials in $R_{ed}$ are of the form $\mathbf{x^a}y^b$, where $\mathbf{x^a} = x_0^{a_0}\cdots x_{n-1}^{a_{n-1}}$ for some exponent vector $\mathbf{a} = (a_0, \ldots, a_{n-1})$, with $|\mathbf{a}| + b = id$. All monomials in $R_d$ will be $k$-algebra generators of $R^{(d)}$ by construction. We aim to show the only remaining generator is $y^d \in R_{2d}$. 
    
     Let $\mathbf{x^a}y^b$ be some monomial in $R_{2d}$ with $\mathbf{a} \neq \overline{0}$. We claim this monomial can be rewritten as the product of two monomials in $R_d$. First, recall that $\mathbf{a}$ is a vector, so for any $s,t \geq 0$ with $s+t = |\mathbf{a}|$, we can find vectors $\mathbf{a_s}$ and $\mathbf{a_t}$ such that $|\mathbf{a_s}| = s$, $|\mathbf{a_t}| = t$, and $\mathbf{a_s} + \mathbf{a_t} = \mathbf{a}$. Since $x^ay^b \in R_{2d}$, $|a| + 2b$ is even, so $|a|$ is always even, and we split in cases based on the parity of $b$. 
     
     If $b$ is even, we can use the above fact to find $\mathbf{a_s}$ and $\mathbf{a_t}$, each with magnitude $\frac{|\mathbf{a}|}{2}$, so that $\mathbf{x^a}y^b = \mathbf{x}^{\mathbf{a_s}}y^{\frac{b}{2}} \cdot \mathbf{x}^{\mathbf{a_t}} y^{\frac{b}{2}}$. In particular, $\deg(\mathbf{x}^{\mathbf{a_s}}y^{\frac{b}{2}}) = \deg(\mathbf{x}^{\mathbf{a_t}}y^{\frac{b}{2}}) = \frac{|\mathbf{a}|}{2} + 2\cdot \frac{b}{2} = d$, since $|\mathbf{a}| + 2b = 2d.$
    Thus, $\mathbf{x}^{\mathbf{a_s}}y^{\frac{b}{2}}$ and $\mathbf{x}^{\mathbf{a_t}}y^{\frac{b}{2}}$ are monomials in $R_d$, and $\mathbf{x^a}y^b$ can be written as product of existing generators. 
    
    If $b$ is odd, we can similarly find $\mathbf{a_s}$ with $|\mathbf{a_s}| = \frac{|\mathbf{a}|}{2} -1$ and $\mathbf{a_t}$ with $|\mathbf{a_t}| = \frac{|\mathbf{a}|}{2}+1$, so that $\mathbf{a} = \mathbf{a_s} + \mathbf{a_t}$. Then, $\mathbf{x}^{\mathbf{a_s}}y^{\frac{b+1}{2}}$ and $\mathbf{x}^{\mathbf{a_t}}y^\frac{b-1}{2}$ are both monomials in $R_d$ that multiply to $\mathbf{x^a}y^b$, so it is decomposable into existing generators. 

    The remaining elements of $R_{2d}$ are those with $\mathbf{a} = \overline{0}$. The only such element is $y^d$. Since $d$ is odd, and $\deg(y) = 2$, there is no element of the form $y^b$ in $R_d$, and we cannot write $y^d$ as the product of existing generators. Thus, $y^d$ must be a $k$-algebra generator of $R^{(d)}$.

    Lastly, we claim that every monomial of $R_{ed}$ for $e \geq 3$ can be generated by $R_d \cup \{y^d\}.$ We do so inductively. Suppose all $R_{e'd}$ with $e' < e$ can be generated by the given set and let $\mathbf{x^a}y^b \in R_{ed}$. Either $|\mathbf{a}|$ or $b$ must be greater than or equal to $d$ in order for $|\mathbf{a}| + 2b \geq 3d$. If $b \geq d$, we have $\mathbf{x^a}y^b = \mathbf{x^a}y^{b-d} \cdot y^d$. Since $\mathbf{x^a}y^{b-d} \in R_{(e-2)d}$ and $y^d \in \langle y^d \rangle$, this element can be generated by the given set. If $|\mathbf{a}| \geq d$, we can find $\mathbf{a_s}$ and $\mathbf{a_t}$ with magnitudes $d$ and $|\mathbf{a}|-d$, respectively, such that $\mathbf{a} = \mathbf{a_s} + \mathbf{a_t}$. Then, $\mathbf{x^a}y^b = \mathbf{x}^{\mathbf{a_s}} \cdot \mathbf{x}^{\mathbf{a_t}}y^b$. Since $\mathbf{x}^{\mathbf{a_s}} \in R_d$ and $\mathbf{x}^{\mathbf{a_t}}y^b \in R_{(e-1)d}$, this element can also be generated by the given set. 

    Thus, the $k$-algebra generators of $R^{(d)}$ are the monomials in $R_d$ (all of which have degree $d$) and $y^d$ (which has degree $2d$). 
\end{proof}

Since we rescale the grading on $S$ and $M$, the above lemma shows that $M$ has precisely one generator of degree 2, and the remaining generators are degree 1, as desired. Moreover, for an arbitrary $\mathbb{P}(1^n,2)$, the Symonds' constant, $\sigma(S)$, is always 1 (for $d$ odd), so when $d \gg 0$ and odd, $\beta^S(M)$ will extend through row $n + 1$. This result will be necessary for our main results, so we label it as a corollary. 

\begin{corollary}
\label{cor:lastRow}
    For any $n\geq 1$, and any $d \geq n+2$ and odd, the bottom-most row of the Betti table of the $d$th Veronese embedding of $\mathbb{P}(1^n, 2)$ has index $n+1$. 
\end{corollary}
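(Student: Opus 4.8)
The plan is to assemble the corollary directly from three ingredients already in hand in this section: the regularity computation of Lemma \ref{lem:regularity}, the generator count of Lemma \ref{lem:dOddgens}, and the nonstandard-graded dictionary recorded just after Lemma \ref{lem:regularity} that relates $\reg(M)$ to the index of the bottom-most nonzero row of the Betti table. In short, I would compute $\reg(M)$ and the Symonds' constant $\sigma(S)$ separately, and then add them.

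First I would record the weights of $\mathbb{P}(1^n,2)$, namely $a_0 = \cdots = a_{n-1} = 1$ and $a_n = 2$, so that $\sum_{j=0}^n a_j = n+2$. The hypothesis $d \geq n+2$ is therefore exactly the hypothesis $d \geq \sum_{j=0}^n a_j$ of Lemma \ref{lem:regularity}, which then yields $\reg(M) = n$ for $M = R^{(d)}$. Second, I would compute $\sigma(S)$ using the parity assumption: by Lemma \ref{lem:dOddgens}, when $d$ is odd the module $M$ has exactly one generator of degree $2$ (coming from $y^d$) and all other generators of degree $1$. Consequently the polynomial ring $S$ attached to the embedding has a single variable of degree $2$ and all remaining variables of degree $1$, so $\sigma(S) = \sum_{i=0}^N (b_i - 1) = 1$. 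Combining these with the stated dictionary, the bottom-most nonzero row of $\beta^S(M)$ has index $\reg(M) + \sigma(S) = n + 1$, which is the claim.

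The step I expect to be the crux is the determination that $\sigma(S) = 1$ rather than $0$, since this is precisely where the odd parity of $d$ is essential and is what distinguishes this corollary from the even case (where all generators have degree $1$, giving $\sigma(S) = 0$ and bottom row $n$). The value of $\sigma(S)$ is controlled entirely by the degree distribution of the generators of $M$, so the whole corollary hinges on Lemma \ref{lem:dOddgens} guaranteeing a single degree-$2$ generator; everything else is bookkeeping once the nonstandard-graded translation is granted. The one remaining point to check is the boundary value $n = 1$, which lies just outside the stated range $n > 1$ of Lemma \ref{lem:regularity}: here I would either verify that the cohomological argument of that lemma (using $H^n\bigl(\Proj(R), \mathcal{O}(e)\bigr) \neq 0 \iff e \leq -\sum_j a_j$) goes through verbatim for $n = 1$, or dispatch $\mathbb{P}(1,2)$ by a direct computation, after which the same $\reg(M) + \sigma(S) = n+1$ conclusion follows.
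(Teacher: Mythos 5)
Your proposal matches the paper's own (implicit) argument exactly: the paper derives this corollary by combining $\reg(M)=n$ from Lemma \ref{lem:regularity} with $\sigma(S)=1$ from Lemma \ref{lem:dOddgens} via the dictionary ``bottom row $=\reg(M)+\sigma(S)$,'' precisely as you do. Your observation about the boundary case $n=1$ falling outside the stated hypothesis $n>1$ of Lemma \ref{lem:regularity} is a fair and worthwhile point of care that the paper glosses over, but it does not change the substance of the argument.
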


The below lemma highlights what happens for $\mathbb{P}(1^n,2)$ when $d$ is even.

\begin{lemma}
\label{lem:dEvenGens}
    Let $R = k[x_0, \ldots, x_{n-1}, y]$ with $\deg(x_i) = 1$ and $\deg(y) =2$. When $d$ is even, all of the $k$-algebra generators of $R^{(d)}$ have degree $d$. In particular, the bottom-most row of corresponding Betti table has index $n$. 
\end{lemma}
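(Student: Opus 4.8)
The plan is to follow the skeleton of the proof of Lemma~\ref{lem:dOddgens} almost verbatim, showing that every monomial of $R_{id}$ with $i \geq 2$ factors into monomials of degree $d$, so that $R^{(d)}$ needs no generators outside degree $d$. The one structural change, and the reason the conclusion differs from the odd case, is that when $d$ is even the monomial $y^{d/2}$ already lies in $R_d$, since $\deg\!\big(y^{d/2}\big) = 2\cdot\tfrac{d}{2} = d$. Consequently the pure power $y^d$ is no longer an obstruction: it simply factors as $y^{d/2}\cdot y^{d/2}$, and no stray degree-$2d$ generator is forced to appear.

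For the base case I would take a monomial $\overline{x}^{\overline{a}}y^b \in R_{2d}$, so that $|\overline{a}| + 2b = 2d$ and hence $b \leq d$. I would split the $y$-exponent as $b = \floor{b/2} + \lceil b/2\rceil$ and, using that $\overline{a}$ is merely a vector of nonnegative integers (exactly as in Lemma~\ref{lem:dOddgens}), split $\overline{a} = \overline{a_s} + \overline{a_t}$ with $|\overline{a_s}| = d - 2\floor{b/2}$ and $|\overline{a_t}| = d - 2\lceil b/2\rceil$. The only thing to verify is nonnegativity of these two magnitudes: since $d$ is even and $b \leq d$, both $\floor{b/2}$ and $\lceil b/2\rceil$ are at most $d/2$, so each magnitude is $\geq 0$, and their sum is $2d - 2b = |\overline{a}|$, making the split consistent. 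Each factor $\overline{x}^{\overline{a_s}}y^{\floor{b/2}}$ and $\overline{x}^{\overline{a_t}}y^{\lceil b/2\rceil}$ then has degree $d$; in the extreme case $\overline{a} = \overline{0}$ one is forced to $b = d$, recovering the factorization $y^d = y^{d/2}\cdot y^{d/2}$.

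The inductive step for $i \geq 3$ is identical in spirit to Lemma~\ref{lem:dOddgens}: from $|\overline{a}| + 2b = id \geq 3d$ at least one of $|\overline{a}| \geq d$ or $b \geq d/2$ must hold (otherwise $|\overline{a}| + 2b < 2d$). In the first case I peel off a degree-$d$ factor $\overline{x}^{\overline{a_s}}$ with $|\overline{a_s}| = d$; in the second I peel off $y^{d/2} \in R_d$. Either way the complementary monomial lies in $R_{(i-1)d}$, and the inductive hypothesis completes the factorization. This proves that $R^{(d)}$ is generated in degree $d$.

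For the ``in particular'' clause I would convert generation in degree $d$ into a statement about $S$: since every generator of $R^{(d)}$ has degree $d$, after rescaling by $d$ every variable $z_i$ of $S$ has degree $1$, so $S$ is standard graded and its Symonds constant is $\sigma(S) = \sum_i(\deg z_i - 1) = 0$. Invoking Lemma~\ref{lem:regularity} (valid once $d \geq n+2$) gives $\reg(M) = n$, and by the discussion following that lemma the bottom-most nonzero row of the Betti table has index $\reg(M) + \sigma(S) = n$; thus the syzygies occupy rows $q = 1, \ldots, n$, the asserted count of $n$ rows. I expect the only genuine subtlety to be the parity/nonnegativity bookkeeping in the base case --- confirming that the chosen split of $b$ keeps each $y$-exponent at most $d/2$ --- since every other step runs parallel to the already-established odd case.
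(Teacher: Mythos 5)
Your proof is correct and follows essentially the same route as the paper: an induction that peels off either a degree-$d$ monomial in the $x_i$ or the factor $y^{d/2}\in R_d$, followed by the observation that $\sigma(S)=0$ so the Betti table ends in row $\reg(M)+\sigma(S)=n$. The only cosmetic difference is that you handle $i=2$ via an explicit symmetric split of $b$ (mirroring Lemma~\ref{lem:dOddgens}), whereas the paper folds $i=2$ into the same peeling argument used for $i\geq 3$; both are valid.
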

\begin{proof}
    The monomials of degree $d$ in $R$ are generators of $R^{(d)}$ by construction. In this case, a pure power of $y$ is included in this set as $y^{\frac{d}{2}} \in R_d$. 

    We claim any monomial in $R_{ed}$ with $e \geq 2$ can be written as a product of monomials in $R_d$. By induction, suppose this true for all $R_{e'd}$ with $e' < e$ and let $\mathbf{x^a}y^b \in R_{ed}$, so that $|\mathbf{a}| + 2b = ed$. Either $|\mathbf{a}| \geq d$ or $b \geq \frac{d}{2}$ in order for $|\mathbf{a}| + 2b \geq 2d$. 

    If $b \geq \frac{d}{2}$, we have $\mathbf{x^a}y^b = \mathbf{x^a}y^{b-\frac{d}{2}} \cdot y^{\frac{d}{2}}$. Since $\mathbf{x^a}y^{b-\frac{d}{2}} \in R_{(e-2)d}$ and $y^{\frac{d}{2}}\in R_d$, this element can be generated by the given set. 

    If $|\mathbf{a}| \geq d$, we can find $\mathbf{a_s}$ and $\mathbf{a_t}$ with magnitudes $d$ and $|\mathbf{a}|-d$, respectively, such that $\mathbf{a} = \mathbf{a_s} + \mathbf{a_t}$. Then, $\mathbf{x^a}y^b = \mathbf{x}^{\mathbf{a_s}} \cdot \mathbf{x}^{\mathbf{a_t}}y^b$. Since $\mathbf{x}^{\mathbf{a_s}} \in R_d$ and $\mathbf{x}^{\mathbf{a_t}}y^b \in R_{(e-1)d}$, this element can also be generated by the given set. 

    Moreover, all of the $k$-algebra generators of $R^{(d)}$ have degree $d$, so the ring $S=k[z_0, \ldots, z_N]$ where each $z_i$ corresponds to a generator is standard graded (after rescaling). Thus, $\sigma(S) = 0$ when $d$ is even, and the corresponding Betti table extends through row $n$. 
\end{proof}

The previous two results highlight a key phenomenon, and a second obstacle of working with weighted projective spaces: the Betti tables corresponding to different Veronese embeddings of the same weighted projective space can have a different number of rows, unlike the standard graded setting. It is expected that the number of rows will depend on the degrees of the variables in the nonstandard graded setting; however, the Veronese degree also impacts the number of rows in the Betti table.

In this article, we will focus on weighted projective spaces of the form $\mathbb{P}(1^n, 2)$. To begin motivating the decision to specialize to this case, we replicate the above analysis for $\mathbb{P}(1,1,3)$. 

\begin{example}
    \label{ex:P(1,1,3)Reg}
    Let $R = k[x_0,x_1,y]$ be the nonstandard graded polynomial ring where $\deg(x_0) = \deg(x_1) = 1$ and $\deg(y) = 3$. Consider the 5th Veronese, 
    \[M = R^{(5)} = k[x_0^5, \; x_0^4x_1, \ldots, x_1^5, \; x_0^2y, \; x_0x_1y, \; x_1^2y, \; \bm{x_0y^3}, \; \bm{ x_1y^3}, \; \bm{y^5}].\] 
    The $k$-algebra generators written in bold are those supplying variables with degree $> 1$ in the corresponding $S$. In particular, $S$ will have two variables of degree 2 and one variable of degree 3, so that $\sigma(S) = 4$. Therefore, for this particular value of $d$, we expect the Betti table to extend through the sixth row. The Betti table for this setting, shown in Example \ref{ex:futureDir}, confirms this computation.
\end{example}

Observe that, even for a relatively small example over $\mathbb{P}(1,1,3)$, the Betti table becomes much larger. However, the real complication is that, unlike $\mathbb{P}(1^n,2)$, there are more than two possibilities for the value of $\sigma(S)$. For example, when $d = 7$, $\sigma(S) = 2$, and when $d = 9$, $\sigma(S) = 0$, each of which result in Betti tables with a different number of rows. Thus, the analysis we are interested in becomes more delicate as we would not only need to consider the cases arising from varying $n$, but also the different possible $\sigma(S)$ values for each $n$.

\section{Notation}
\label{section:notation}
For the remainder of this article, we narrow our scope to $\mathbb{P}(1^n,2)$ and adopt the following notation.

Let $R := k[x_0, \ldots, x_{n-1},y]$ denote the nonstandard $\mathbb{Z}_{\geq 0}$-graded polynomial ring corresponding to the weighted projective space $\mathbb{P}(1^n, 2)$, so that $\deg(x_i) = 1$ and $\deg(y) = 2$. In particular, we will consider the setting where $n > 1$.

We construct $R^{(d)}$ as described in Definition \ref{def:weightedVer} and let $S$ be a graded polynomial ring with minimal surjection $S \to R^{(d)}$. In particular, we let $S:=k[z_0, \ldots, z_N]$ be the polynomial ring where the $z_i$ correspond to the $k$-algebra generators of $R^{(d)}$. The induced map on $\Proj$ is the $d$th Veronese embedding in this setting: $\varphi_d\colon \mathbb{P}(1^n,2) \to  \Proj(S)$. Note that the grading in $S$ is rescaled by $d$, so that $\deg(z_i)$ is the degree of the corresponding monomial in $R$ divided by $d$. We order the $z_i$'s so that $z_0, \ldots, z_n$ correspond to the pure power generators. Concretely, $z_0, \ldots, z_{n-1}$ correspond to $x_0^d, \ldots x_{n-1}^d$, so $\deg(z_0) = \ldots = \deg(z_{n-1}) = 1$. However, since $\deg(y) =2$, the degree of $z_n$ will depend on the parity of $d$. If $d$ is even, $z_n$ corresponds to $y^{\frac{d}{2}}$, and $\deg(z_n) = 1$. The more interesting case occurs when $d$ is odd, as $z_n$ corresponds to $y^d$, and $\deg(z_n) = 2$, so the ring $S$ is also nonstandard graded.

\begin{remark}
    Proposition 3.2 of \cite{beLinSyzOCurvesIWProjSp} confirms that the Veronese map is an embedding in this setting. For example, one could choose $\ell = 2$ and apply the proposition. 
\end{remark}

Let $M := R^{(d)}$, viewed as an $S$-module, where the $z_i$ act according to their corresponding $k$-algebra generator of $M$. For example, if $m \in M$, then $z_0 \cdot m = x_0^d \cdot m \in M$. As in Section \ref{section:eel}, it will be useful to perform an Artinian reduction on $M$, so we state it explicitly for $d$ even and odd. 

\subsection{Artinian Reduction}
Let $d$ be even and consider $\overline{M} = M/\langle x_0^d, \ldots, x_{n-1}^d, y^\frac{d}{2} \rangle$, viewed as an $\overline{S}$-module, where $\overline{S} = S/\langle z_0, \ldots, z_n \rangle$. Since $\{x_0^d, \ldots, x_{n-1}^d,\; y^\frac{d}{2}\}$, is a regular sequence on $M$, $\overline{M}$ is an Artinian reduction of $M$, and the Betti table of $\overline{M}$ over $\overline{S}$ equals the Betti table of $M$ over $S$. Recall that, when $d$ is even, $S$ is standard graded, so the benefit of performing the Artinian reduction is that $\overline{M}$ is a finite $\overline{S}$-module. When $d$ is odd, $S$ is nonstandard graded, so the Artinian reduction is particularly advantageous as it converts $M$ to a a finite module over a standard graded ring. 

In this case, let $d$ be odd and consider $\overline{M} = M/\langle x_0^d, \ldots, x_{n-1}^d, y^d \rangle$, viewed as an $\overline{S}$-module, where $\overline{S} = S/\langle z_0, \ldots, z_n \rangle$. $\overline{M}$ is an Artinian reduction of $M$ as $\{x_0^d, \ldots, x_{n-1}^d,\; y^d\}$ is again a regular sequence on $M$. Thus, in either case, it is sufficient to show the entries of $\beta^{\overline{S}}(\overline{M})$ are nonzero at the desired locations.

\begin{corollary}
    $\overline{S}$ as defined above is standard graded. 
\end{corollary}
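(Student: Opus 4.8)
The plan is to reduce the statement directly to the two generation lemmas already established, Lemma~\ref{lem:dEvenGens} and Lemma~\ref{lem:dOddgens}. Recall that a polynomial ring is standard graded precisely when each of its variables has degree $1$. Since $\overline{S} = S/\langle z_0, \ldots, z_n\rangle$ is isomorphic to the polynomial ring $k[z_{n+1}, \ldots, z_N]$ on the remaining variables, it suffices to verify that each of $z_{n+1}, \ldots, z_N$ has degree $1$, where the degree of $z_i$ is the $R$-degree of its corresponding generator of $R^{(d)}$ divided by $d$.

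First I would dispose of the even case. By Lemma~\ref{lem:dEvenGens}, when $d$ is even every generator of $R^{(d)}$ lies in degree $d$, so after rescaling by $d$ every variable $z_0, \ldots, z_N$ of $S$ has degree $1$. In particular $z_{n+1}, \ldots, z_N$ all have degree $1$, so $\overline{S}$ is standard graded.

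The odd case carries the one substantive observation. By Lemma~\ref{lem:dOddgens}, when $d$ is odd the generator $y^d$, of degree $2d$, is the unique generator of $R^{(d)}$ not lying in degree $d$. Under the ordering fixed in Section~\ref{section:notation}, $y^d$ corresponds to the variable $z_n$, which therefore has degree $2$, while every other variable of $S$ has degree $1$. The key point is that this unique degree-$2$ variable is precisely one of $z_0, \ldots, z_n$, hence is killed when forming $\overline{S}$. Consequently every surviving variable $z_{n+1}, \ldots, z_N$ corresponds to a degree-$d$ generator of $R^{(d)}$ and so has degree $1$, which shows $\overline{S}$ is standard graded.

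I do not anticipate a genuine obstacle: all the real content lives in the generation lemmas, and the corollary is the bookkeeping observation that the Artinian reduction removes exactly the one nonstandard variable. The only point requiring care is the ordering convention on the $z_i$, which is what guarantees that $z_n$ (rather than some $z_j$ with $j > n$) is attached to $y^d$; this is set in the Notation section and makes the conclusion immediate.
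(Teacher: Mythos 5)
Your proposal is correct and follows exactly the paper's own argument: the even case is immediate from Lemma~\ref{lem:dEvenGens}, and the odd case uses Lemma~\ref{lem:dOddgens} to identify $y^d$ (i.e.\ $z_n$) as the unique degree-$2$ variable, which the Artinian reduction removes. Your extra remark about the ordering convention on the $z_i$ is a fair point of care but is already fixed in the Notation section, so nothing is missing.
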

\begin{proof}
    When $d$ is even, $S$ is already standard graded (as is shown in Lemma \ref{lem:dEvenGens}), so $\overline{S}$ must be standard graded as well. 
    
    The more meaningful result is for $d$ odd, but this is an immediate consequence of Lemma \ref{lem:dOddgens}, which states that $R^{(d)}$ has precisely one $k$-algebra generator of degree $2d$, $y^d$, and the rest are degree $d$. Equivalently, $S$ has precisely one variable of degree 2 (which we denote by $z_n$). The Artinian reduction removes this variable, so the resulting $\overline{S}$ is standard graded, containing only variables of degree 1. 
\end{proof}

\begin{remark}
    The above corollary fails for most other weighted projective spaces! In general, if $R$ is any nonstandard graded polynomial ring, $R^{(d)}$ will have $k$-algebra generators in degree $\neq d$ that are not pure powers of the variables, which causes the corresponding $\overline{S}$ to be nonstandard graded. We discuss this complication further in Section \ref{section:futureDir}. 
\end{remark}

For specific monomials $m \in \overline{M}$, we will let $D(m) \subseteq \{z_{n+1}, \ldots z_{N}\}$ be the list of $z_i$ such that the corresponding monomial in $\overline{M}$ divides $m$. Let $A(m)$ be the list such that the corresponding monomial annihilates $m$ in $\overline{M}$.  

The following example details all of the notation.
\label{ex:P(1,1,2),d=3 Notation}
\begin{example}
    Let $R = k[x_0, x_1, y]$, with $\deg(x_0)=\deg(x_1) = 1$, and $\deg(y) = 2$, and consider $d = 3$. 
    \[M = R^{(d)} = k[x_0^3, \; x_1^3,\; y^3,\; x_0^2x_1,\; x_0x_1^2,\; x_0y,\; x_1y],\] and $S = k[z_0, \ldots, z_6]$ where each $z_i$ corresponds to a $k$-algebra generator of $M$, in the order shown. Then, $\overline{M} = M/\langle x_0^3,\; x_1^3,\; y^3 \rangle$ and $\overline{S} = S/\langle z_0, z_1, z_2 \rangle \cong k[z_3, z_4, z_5, z_6]$. 

    For $m = x_1y,$ $D(m) = \{z_6\}$ and $A(m) = \{z_4\}$. As an aside, notice that this $m$ does not satisfy $D(m) \subseteq A(m)$, so it is not a valid choice of monomial for the EEL Method. 
\end{example}

\section{Main Results}
\label{section:MainResults}
Assume the notation defined in Section \ref{section:notation}, and let $d \gg 0$ be an integer. In this section, we will prove the theorems from the introduction. 

We will index the rows in the Betti table by $q$ and state results in terms of $q$, $d$, and $n$. As a result of the nonstandard grading, the range for $q$ will depend on the parity of $d$. If $d$ is even, we will consider $q \in [1, n]$ (due to Lemma \ref{lem:dEvenGens}). If $d$ is odd, we will consider $q \in [1, n+1]$ (Corollary \ref{cor:lastRow}). For $q$ in the appropriate range, we let $F_q(d)$ and $B_q(d)$ denote the left and right-most column indices (respectively) in which we guarantee a nonzero entry in row $q$. These constructions will be made more explicit in their respective subsections below. 

As the $d$ odd case is more nuanced, we will describe the necessary methods in detail for $d$ odd first. In Sections \ref{subsec:FrontResults} and \ref{subsec:BackResults}, we compute $F_q(d)$ and $B_q(d)$ for $d$ odd by applying the EEL Method to specific monomials. As discussed in Section \ref{subsec:extendingEEL}, the monomials used for $F_q(d)$ and $B_q(d)$ will often be different, so we must also confirm that the blocks of nonzero entries corresponding to each monomial overlap. We show the necessary overlap for $d$ odd in Section \ref{subsec:Overlap}. Then, in Section \ref{subsec:evenVerDeg} we show analogous $F_q(d)$, $B_q(d)$, and overlap results for $d$ even. The theorems from the introduction are a combination of the results from each of these sections, and are shown in Section \ref{subsec:MainThrm}. 

We first provide a lemma that gives an explicit formula for computing the Hilbert functions of nonstandard graded polynomial rings with some degree 1 variables and one degree 2 variable, which will be very useful in the computations of $F_q(d)$ and $B_q(d)$. The following lemma is stated in terms of a ring $R_{i,1}$ as that's how we will apply it in the later subsections. For now, one can see the utility of such a lemma because it will allow us to explicitly compute $N$ (see Remark \ref{rmk:NevenNodd}). 

\begin{lemma}
    \label{Lem:GenHilb}
    Let $R_{i,1}= k[x_0, \ldots, x_{i-1}, y]$ denote the nonstandard graded polynomial ring with $i$ variables of degree 1 and one variable of degree 2. Then, for arbitrary $s$, 
    \[\operatorname{Hilb}(s, R_{i,1}) = \sum_{b=0}^{\floor{\frac{s}{2}}} {s-2b + i - 1 \choose i - 1}.\]
\end{lemma}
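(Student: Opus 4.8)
The plan is to compute the Hilbert function of $R_{i,1}$ by stratifying the monomials according to the exponent of the degree-$2$ variable $y$. A monomial in $R_{i,1}$ of degree $s$ has the form $x_{a_0}^{c_0}\cdots x_{a_{i-1}}^{c_{i-1}}y^{b}$, and its degree is $(c_0+\cdots+c_{i-1})+2b = s$. The key observation is that once we fix the value of $b$ (the power of $y$), the remaining factor is a monomial in the $i$ degree-$1$ variables $x_{a_0},\ldots,x_{a_{i-1}}$ of total degree exactly $s-2b$. So the count of degree-$s$ monomials decomposes as a sum over the allowable values of $b$.

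First I would fix $b$ and count the monomials in the degree-$1$ variables alone. The number of monomials of degree $s-2b$ in the $i$ variables $x_{a_0},\ldots,x_{a_{i-1}}$ is the standard stars-and-bars count, $\binom{(s-2b)+i-1}{i-1}$, which matches the summand in the claimed formula. Next I would determine the range of $b$: we need $s-2b \geq 0$, i.e.\ $b \leq s/2$, and since $b$ is a nonnegative integer this gives $0 \leq b \leq \lfloor s/2 \rfloor$. Summing the count over this range yields
\[
\operatorname{Hilb}(s, R_{i,1}) = \sum_{b=0}^{\floor{\frac{s}{2}}} \binom{s-2b+i-1}{i-1},
\]
which is exactly the claimed identity. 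The argument is essentially a direct bijective/counting decomposition, so once the stratification is set up the rest follows immediately.

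The only thing requiring a little care is verifying that this decomposition is a genuine bijection between the set of degree-$s$ monomials of $R_{i,1}$ and the disjoint union (over $b$) of degree-$(s-2b)$ monomials in the $x$-variables: every degree-$s$ monomial has a unique $y$-exponent $b$, and removing the $y^b$ factor lands it in a unique summand, with the process reversible. I expect no genuine obstacle here; the main point is simply to state the stratification cleanly and invoke the standard count for monomials in $i$ variables. I would conclude by noting that the grading of $y$ being $2$ is precisely what forces the step of $2$ in the index $s-2b$ and the upper limit $\lfloor s/2\rfloor$, distinguishing this from the uniform-degree case.
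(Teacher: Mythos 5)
Your proposal is correct and matches the paper's proof essentially exactly: both stratify the degree-$s$ monomials by the exponent of $y$, count the degree-$(s-2b)$ monomials in the $i$ degree-one variables via the standard stars-and-bars formula $\binom{s-2b+i-1}{i-1}$, and sum over $0 \leq b \leq \lfloor s/2 \rfloor$. No gaps; nothing further to add.
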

\begin{proof}
    Recall that $\operatorname{Hilb}(s, R_{i,1}) = \dim_k\left((R_{i,1})_s\right)$. We will count the total number of degree $s$ monomials in $R_{i,1}$ as these form a basis for $(R_{i,1})_s$ as a vector space over $k$. A monomial of degree $s$ in this ring will be of the form $\mathbf{x^a}y^b$ (where $\mathbf{a}=(a_0, \ldots, a_{i-1})$ is some exponent vector and $\mathbf{x^a}$ denotes $x_0^{a_0}\cdots x_{i-1}^{a_{i-1}}$) with $|\mathbf{a}| + 2b = s$. We can break this computation into pieces by fixing the exponent on $y$ and counting the number of monomials of the correct degree under that constraint. 
    
    Since we require $|\mathbf{a}| + 2b = s$, we consider $b$ from 0 to $\floor{\frac{s}{2}}$. The number of degree $s$ monomials in $R_{i,1}$ containing $y^b$ for a fixed $b$ is the same as the number of degree $s - 2b$ monomials in $R_{i,1} / \langle y \rangle$. Importantly, $R_{i,1}/\langle y \rangle$ is a standard graded polynomial ring (with $i$ variables), so the usual combinatorial formulas apply. In particular, for a fixed $b$, we have $s - 2b + i-1 \choose i-1$ monomials. Summing over all choices of $b$, we get the desired formula. 
\end{proof}

\begin{remark}
    \label{rmk:NevenNodd}
    The above lemma gives a direct way for computing $N = \dim \Proj(S)$. The precise formula will depend on the parity of $d$, so we state both cases here. If we consider $R_{n,1} = R$, $\operatorname{Hilb}(d,R)$ measures the degree $1$ variables of $S$. When $d$ is odd, Lemma \ref{lem:dOddgens} shows that this is all the variables of $S$ but one, so $\operatorname{Hilb}(d,R) = \dim \Proj(S) = N$ in this case. Therefore,
    \[N = \operatorname{Hilb}(d, R) = \sum_{b=0}^{\frac{d-1}{2}} {d - 2b + n -1 \choose n - 1} \text{ when } d \text{ is odd}.\]

    When $d$ is even, Lemma \ref{lem:dEvenGens} shows $S$ is generated in degree 1, so that $\operatorname{Hilb}(d,R) = N+1$, and  
    \[N = \operatorname{Hilb}(d, R) - 1 = \sum_{b=0}^{\frac{d}{2}} {d - 2b + n -1 \choose n - 1} -1 = \sum_{b=0}^{\frac{d}{2}-1} {d - 2b + n -1 \choose n - 1} \text{ when } d  \text{ is even}.\]  
\end{remark}

\vspace{10pt}
The following definition and lemma will allow us to compute the order of magnitude of $N$ with respect to $d$ (and, more generally, the order of magnitude of $\operatorname{Hilb}(s,R_{i,1})$ with respect to $s$). 

\begin{definition}
We say that a function $F: \mathbb{Z}\to \mathbb{Z}$ \textit{agrees with a period $2$ polynomial of degree $\leq b$ for $s\gg 0$} if there exists a pair of polynomials $(p_{\text{even}}, p_{\text{odd}})$, each of degree $\leq b$, such that 
\[F(s) = \begin{cases}
    p_{\text{even}}(s) & \text{if } s \text{ is even} \\
    p_{\text{odd}}(s) & \text{if } s \text{ is odd}
\end{cases} \]
for all $s \gg 0$. We say that the periodic polynomial has degree equal to $b$ if both $p_{\text{even}}$ and $p_{\text{odd}}$ have degree $b$.
\end{definition}

\begin{lemma}
\label{lem:asympN}
Let $R_{i,1}$ be a polynomial ring as defined in Lemma \ref{Lem:GenHilb}. 
    \begin{enumerate}
        \item\label{cor:asympN:poly} There exists a period $2$ polynomial $(p_{\text{even}}, p_{\text{odd}})$ with degree equal to $i$ which agrees with $\Hilb(s,R_{i,1})$ for $s \gg 0$. Moreover, $p_{\text{even}}$ and $p_{\text{odd}}$ have the same leading coefficient.
        \item\label{cor:asympN:hypersurface} If $f\in R_{i,1}$ is any positive degree, homogeneous element and $Q=R_{i,1}/f$, then there exists a period $2$ polynomial $(p'_{even}, p'_{odd})$ of degree $\leq i-1$ which agrees with $\Hilb(s,Q)$ for $s \gg 0$.
    \end{enumerate}
\end{lemma}
\begin{proof}
(\ref{cor:asympN:poly}) For this lemma, let $R = R_{i,1}$, and consider $R^{(2)}$ as a module over the polynomial ring $S$, constructed as is outlined in Section \ref{section:notation} for this choice of $R$ with $d = 2$. Let $R_{\text{even}} = \bigoplus_{j \geq 0, \text{ even}} R_j = R^{(2)}$ and $ R_{\text{odd}} = \bigoplus_{j \geq 0, \text{ odd}} R_j$, both viewed as $S$-modules (the action of $S$ on $R_{\text{odd}}$ is the same as the action of $S$ on the Veronese). Note that, for this choice of $d$, $S$ is a standard graded polynomial ring (by Lemma \ref{lem:dEvenGens}).

In the standard graded setting, the Hilbert function of a $S$-module, $M$, eventually agrees with a polynomial of the form 
\[\left(\frac{e(M)}{(\dim(M)-1)!}t^{\dim(M)-1} + \text{ lower order terms }\right),\] 
where $e(M)$ is the multiplicity of the module. (The proof of this fact can be found in \cite[~\S 4]{bhCMRings}. Theorem 4.1.3 states the Hilbert function eventually agrees with a polynomial of degree $\dim(M)-1$, and Definition 4.1.5 gives a formula for the leading coefficient, as $e_0$ = $e(M)$ when $\dim(M) > 0$). 

For both $M = R_{\text{even}}$ and $M = R_{\text{odd}}$, $\{x_0^2, \ldots x_{i-1}^2, y\}$ forms a regular sequence on $M$ in $R$, so that $\text{depth}(m_R, M) \geq i+1$. However, 
\[i+1=\dim(R) \geq \dim(M) \geq \text{depth}(m_R, M) = i+1,\] so we must have that $\dim(R_{\text{even}}) = \dim(R_{\text{odd}}) = i+1.$ Thus, the Hilbert functions of $R_{\text{even}}$ and $R_{\text{odd}}$ both eventually agree with polynomials of degree $(i+1)-1 = i$. In order to show these polynomials have the same leading coefficient, it suffices to show that $e(R_{\text{even}}) = e(R_{\text{odd}})$. 

Multiplicity is invariant under an Artinian reduction, so that $e(R_{\text{even}}) = \text{length}\left( R_{\text{even}} / \langle x_0^2, \ldots, x_{i-1}^2, y \rangle \right)$. Since we take the quotient by a monomial ideal, this can be found by counting the number of nonzero monomials in $R_{\text{even}} / \langle x_0^2, \ldots, x_{i-1}^2, y \rangle$ (and similarly for $e(R_{\text{odd}})$). 

In particular, $e(R_{\text{even}})$ is the number of square-free monomials in with even degree the $x_j$'s, which is known to be $2^{i-1}$. The total number of square free monomials in the $x_j$'s is known to be $2^i$, so $e(R_{\text{odd}})$, the number of square-free, odd-degree monomials in the $x_j$'s, is given by $2^i - 2^{i-1} = 2^{i-1}$. Thus, $e(R_{\text{even}}) = e(R_{\text{odd}})$, and the statement follows. 
 
(\ref{cor:asympN:hypersurface}) For any $q \in R$, and $Q = R/f$ we have the short exact sequence 
\[0 \longrightarrow R(-\deg(f)) \longrightarrow R \longrightarrow Q \longrightarrow 0, \] so that 
\[\Hilb(d,Q) = \Hilb(d, R) - \Hilb(d-\deg(f), R). \]
By (\ref{cor:asympN:poly}), we know that, for $s \gg 0$, the Hilbert function over $R$ agrees with a period 2 polynomial $(p_{\text{even}}, p_{\text{odd}})$, where $p_\text{even}$ and $p_{\text{odd}}$ are both of the form $ct^i + \text{lower order terms}$ (where $c = \frac{2^{i-1}}{i!}$). 

We construct the necessary $(p'_{\text{even}}, p'_{\text{odd}})$ in cases. If $\deg(f)$ is even, we define
\[p'_{\text{even}}(t):= p_{\text{even}}(t) - p_{\text{even}}(t-\deg(f)) \text{ and } p'_{\text{odd}}(t):= p_{\text{odd}}(t) - p_{\text{odd}}(t-\deg(f)). \]
If $\deg(f)$ is odd, we can similarly construct 
\[p'_{\text{even}}(t):= p_{\text{even}}(t) - p_{\text{odd}}(t-\deg(f)) \text{ and }p'_{\text{odd}}(t):= p_{\text{odd}}(t) - p_{\text{even}}(t-\deg(f)).\] 

The statement on Hilbert functions combined with the results in (\ref{cor:asympN:poly}) shows that the Hilbert function of $Q$ will eventually agree with the period 2 polynomial $(p'_{\text{even}}, p'_{\text{odd}})$. It remains to show these polynomials have the desired degree. 

In either case, 
\[p'_{\text{even / odd}}(t) = ct^i + \text{ lower terms } - c(t-\deg(f))^i- \text{ lower terms}.\]
The degree $i$ term will cancel, so that the degrees of $p'_{\text{even}}$ and $p'_{\text{odd}}$ must both be less than to $i$, as claimed. 
\end{proof}

\begin{remark}
\label{rmk:asypN}
    Part (\ref{cor:asympN:poly}) of the above Lemma shows that, for $s \gg 0$, $\operatorname{Hilb}(s, R_{i,1})$ agrees with a period 2 polynomial, where each piece has the form $\frac{2^{i-1}}{i!} t^i +$ (lower order terms). Moreover, this means that $\operatorname{Hilb}(s,R_{i,1})$ is $\mathcal{O}(s^i)$. In particular, $N$, as defined in Remark \ref{rmk:NevenNodd}, is asymptotically $d^n$ (regardless of the parity of $d$). 
\end{remark}

 \subsection{Front of Betti Table for Odd Veronese Degree}\hfill
 \label{subsec:FrontResults}

Recall from Section \ref{section:eel} that a monomial $m \in \overline{M}_q$ corresponds to a nonzero block of entries in row $q$ of the Betti table if $D(m) \subseteq A(m)$. In this case, the column index of the left-most entry of this block is given by $|D(m)|$. In order to describe the front of the Betti table, we will find a monomial $m$ that is expected to have a relatively small number of divisors, then compute $|D(m)|$ explicitly. 

Since $y$ is the only variable of degree 2, the $y$-heaviest monomial is a good choice for such an $m$. We denote $F_q(d)$ to be $|D(m)|$ where $m$ is the lex-most monomial in $\overline{M}_q$ with respect to the order $y > x_0 > \ldots > x_{n-1}$. However, for $q = 1$, this monomial is $x_0y^{\frac{d-1}{2}}$, which doesn't satisfy $D(m) \subseteq A(m)$, so we need to choose a different monomial for the first row. We let $F_1(d) = |D(x_0^{d-1}x_1)|$. For either case, one can think of $F_q(d)$ as the column index of the left-most Betti entry that we guarantee to be nonzero in row $q$.

We find $F_1(d)$ separately first, and then address rows $2 \leq q \leq n+1$ in Lemma \ref{lem:Fqd}. 

\begin{lemma}
    \label{lem:F1d}
    For $\mathbb{P}(1^n,2)$, regardless of the parity of $d$, $F_1(d) = 1$.  
\end{lemma}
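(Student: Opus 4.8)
The plan is to unwind the definition of $F_1(d)$ and reduce the claim to a one-line divisibility observation. By definition, $F_1(d) = |D(x_0^{d-1}x_1)|$, where $D(m) \subseteq \{z_{n+1}, \ldots, z_N\}$ collects those $z_i$ whose associated monomial in $\overline{M}$ divides $m$. So the task is simply to count how many of the mixed generators divide $x_0^{d-1}x_1$.

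First I would record two basic facts about the monomial $m = x_0^{d-1}x_1$. It has degree $(d-1)+1 = d$ in $R$, so it lies in $\overline{M}_1$; and since its $x_0$-exponent is $d-1 < d$, its $x_1$-exponent is $1 < d$ (using $d \geq 2$), and it involves no $y$, it is not killed by any element of the regular sequence and so is nonzero in $\overline{M}$. Moreover $m$ is not a pure power, so it is itself one of the generators indexed by $z_{n+1}, \ldots, z_N$.

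The key step is the divisibility count. Every $z_i$ with $n+1 \leq i \leq N$ corresponds to a degree-$d$ monomial of $R$ (the non-pure-power degree-$d$ monomials), by the description of the generators in Section \ref{section:notation} together with Lemmas \ref{lem:dOddgens} and \ref{lem:dEvenGens}. Since $m$ also has degree $d$, a degree-$d$ monomial $m_i$ divides $m$ if and only if $m_i = m$. Hence the only mixed generator dividing $x_0^{d-1}x_1$ is $x_0^{d-1}x_1$ itself, giving $|D(x_0^{d-1}x_1)| = 1$, as claimed.

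Finally I would explain why the answer is insensitive to the parity of $d$: the only generator whose degree depends on the parity is the pure power of $y$, namely $z_n$ (equal to $y^{\frac{d}{2}}$ when $d$ is even and $y^d$ when $d$ is odd), and this is excluded from the index set $\{z_{n+1}, \ldots, z_N\}$ defining $D$; in any case it contains a positive power of $y$ and so cannot divide the $y$-free monomial $m$. Thus the count is $1$ in both parities. There is no genuine obstacle here beyond carefully matching the definitions: once one observes that a degree-$d$ monomial divides another degree-$d$ monomial only when they coincide, the lemma is immediate.
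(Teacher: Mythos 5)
Your count of $|D(x_0^{d-1}x_1)|$ is correct, and in fact you justify it more carefully than the paper does: the observation that a degree-$d$ monomial can only be divided by another degree-$d$ monomial if the two coincide, together with the fact that every $z_i$ with $n+1 \leq i \leq N$ corresponds to a non-pure-power degree-$d$ monomial in either parity, immediately gives $|D(m)| = 1$. Your parity discussion is also fine.

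However, you have omitted the step that the paper's proof treats as the real content of the lemma: verifying that $D(m) \subseteq A(m)$. The value $F_1(d) = |D(m)|$ is only meaningful as ``the column index of the left-most guaranteed nonzero Betti entry in row $1$'' if the chosen monomial satisfies the hypothesis of Lemma \ref{Lem:EEL}; indeed, this hypothesis is precisely why the paper abandons the $y$-heaviest monomial $x_0y^{\frac{d-1}{2}}$ for $q=1$ and switches to $x_0^{d-1}x_1$ in the first place, and the text immediately after the lemma uses the result to conclude that row $1$ has nonzero entries beginning in column $1$. The missing check is a one-liner — the unique element of $D(m)$ is the $z_i$ corresponding to $m$ itself, and $z_i \cdot m = x_0^{2d-2}x_1^2 = 0$ in $\overline{M}$ since $2d-2 \geq d$, so $D(m) \subseteq A(m)$ — but without it your argument only computes a number and does not establish that this number locates a nonvanishing syzygy. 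You should add that verification.
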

\begin{proof}
    Let $m = x_0^{d-1}x_1 \in \overline{M}_1$. This $m$ corresponds to some $z_i \in \overline{S}_1$, so $D(m) = \{z_i\}$. Observe that $D(m) \subseteq A(m)$ as, for $d > 1$, 
    \[z_i \cdot m = x_0^{d-1}x_1 \cdot x_0^{d-1} x_1 = x_0^{2d-2} x_1^2 = 0 \in \overline{M}.\] We define $F_1(d) = |D(m)|$ for this choice of $m$, so $F_1(d) = 1$. 
\end{proof}

The above lemma shows that, for any $d > 1$, the first row of the Betti table is guaranteed to have nonzero entries beginning in column 1. The following lemma describes the left-most guaranteed nonzero entry in the remaining rows for $d \gg 0$ odd. 

\begin{lemma}
\label{lem:Fqd}
    For arbitrary $2 \leq q \leq n+1$ and $d \gg 0$ odd, \[F_q(d) = \operatorname{Hilb}(d,R_{q-1,1}) - \operatorname{Hilb}(d-q-1, R_{q-1,1}) - (q-2),\] where $R_{q-1,1} = k[x_0, \ldots, x_{q-2},\; y] \subseteq R.$
    Equivalently, 
    \[F_q(d) = \sum_{b=0}^{\frac{d-1}{2}} {d - 2b + q - 2 \choose q - 2} - \sum_{b=0}^{\floor*{\frac{d-q-1}{2}}} {d-2b-3 \choose q-2} - q+2. \]
\end{lemma}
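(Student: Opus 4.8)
The plan is to compute $|D(m)|$ directly, where $m$ is the lex-most monomial of $\overline{M}_q$ under the order $y > x_0 > \cdots > x_{n-1}$, by first writing $m$ explicitly and then counting the degree-$d$ monomials that divide it. First I would identify $m$. Since $y$ is heaviest in the order and $y^d = 0$ in $\overline{M}$, the greedy construction makes the $y$-exponent as large as possible, namely $d-1$; this leaves $x$-degree $(q-2)d + 2$, which for $d \gg 0$ and $q \le n+1$ can be realized using the available variables $x_0, \ldots, x_{n-1}$ (each capped at exponent $d-1$, since $q-1 \le n$). Filling greedily in $x_0, x_1, \ldots$ then yields
\[ m = y^{d-1}\, x_0^{d-1} x_1^{d-1} \cdots x_{q-3}^{d-1}\, x_{q-2}^{q}, \]
a monomial of degree $qd$ all of whose exponents are at most $d-1$ (using $q \le n+1 < d$ for $d \gg 0$). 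The key structural observation is that $m$ involves only $x_0, \ldots, x_{q-2}, y$, so $m \in R_{q-1,1}$ and hence every monomial dividing it also lies in $R_{q-1,1}$.

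Next I would characterize the divisors. A degree-$d$ monomial $m_i = x_0^{c_0}\cdots x_{q-2}^{c_{q-2}} y^{b} \in R_{q-1,1}$ divides $m$ precisely when $b \le d-1$, $c_j \le d-1$ for $0 \le j \le q-3$, and $c_{q-2} \le q$. For a \emph{non}-pure-power degree-$d$ monomial all constraints except the last are automatic: any exponent equal to $d$ would force a pure power, and $b \le \floor{d/2} < d$. Hence the single binding condition is $c_{q-2} \le q$, and $D(m)$ consists of exactly the non-pure-power degree-$d$ monomials of $R_{q-1,1}$ with $c_{q-2} \le q$.

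It then remains to count these. I would count \emph{all} degree-$d$ monomials of $R_{q-1,1}$ with $c_{q-2} \le q$ as $\operatorname{Hilb}(d, R_{q-1,1}) - \operatorname{Hilb}(d-q-1, R_{q-1,1})$: the substitution $c_{q-2} \mapsto c_{q-2} - (q+1)$ puts the monomials with $c_{q-2} \ge q+1$ in bijection with all degree-$(d-q-1)$ monomials of $R_{q-1,1}$, so the difference records those with $c_{q-2}\le q$. Finally I would subtract the pure powers erroneously included: among $x_0^d, \ldots, x_{q-2}^d$ (the only degree-$d$ pure powers, since $d$ odd rules out a pure power of $y$), exactly $x_0^d, \ldots, x_{q-3}^d$ satisfy $c_{q-2} = 0 \le q$, contributing $q-2$, while $x_{q-2}^d$ has $c_{q-2} = d > q$ and is not counted. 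Applying Lemma \ref{Lem:GenHilb} with $i = q-1$ to expand both Hilbert values then produces the explicit binomial formula.

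The main obstacle I anticipate is the bookkeeping around pure powers: one must verify that the Hilbert-function difference overcounts by exactly the $q-2$ pure powers $x_0^d, \ldots, x_{q-3}^d$ and no others, that these genuinely fail to divide $m$ (since $m$ contains only $x_j^{d-1}$) and so are correctly absent from $D(m)$, and that every non-pure-power with $c_{q-2}\le q$ \emph{does} divide $m$. A secondary point to check is the degenerate case $q = 2$, where the block $x_0^{d-1}\cdots x_{q-3}^{d-1}$ is empty, $m = y^{d-1}x_0^{2}$, and the correction $-(q-2)$ vanishes; a direct count gives $|D(m)| = 1$, matching the formula.
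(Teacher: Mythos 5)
Your proposal is correct and takes essentially the same route as the paper: you select the same monomial $m = x_0^{d-1}\cdots x_{q-3}^{d-1}x_{q-2}^{q}y^{d-1}$, identify $D(m)$ with the non-pure-power degree-$d$ monomials of $R_{q-1,1}$ satisfying $c_{q-2}\le q$, and arrive at the identical intermediate expression $\operatorname{Hilb}(d,R_{q-1,1})-\operatorname{Hilb}(d-q-1,R_{q-1,1})-(q-2)$; the only difference is that you obtain this count by direct inclusion--exclusion on exponents, whereas the paper computes $\operatorname{Hilb}(d,R_{q-1,1}/I)$ for $I=\langle x_0^d,\ldots,x_{q-3}^d,x_{q-2}^{q+1},y^d\rangle$ via an alternating sum over its minimal free resolution. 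The one small thing you omit that the paper records is the check that $D(m)\subseteq A(m)$ (every divisor of $m$ contains some $x_j$ with $j\le q-3$ or a $y$, hence annihilates $m$), which is what makes $F_q(d)=|D(m)|$ genuinely the left endpoint of a nonzero block rather than just a number.
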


\begin{proof}
    Consider $m = x_0^{d-1} \cdots x_{q-3}^{d-1} x_{q-2}^q y^{d-1} \in R_{q-1,1}$. First, observe that $m \in \overline{M}_q$ as 
    \[ \deg(m) = \frac{(q-2)(d-1) + q + 2(d-1)}{d} = \frac{(qd - 2d - q + 2) + q + (2d - 2)}{d} = \frac{qd}{d} = q.\] 

    Any $z_i \in D(m)$ must correspond to a monomial containing either an $x_j$ for $j \in [0, q-3]$ or a $y$ (as there are no pure power $x_{q-2}$ terms in $\overline{M}$). Any such $z_i$ also corresponds to an annihilator of $m$ in $\overline{M}$, so $D(m) \subseteq A(m)$.
    
    In this setting, $D(m)$ is in bijection with the set of degree $d$ generators of the ring $R_{q-1,1}/I$ for $I  = \langle x_0^d, \ldots ,x_{q-3}^d, x_{q-2}^{q+1}, y^d \rangle$. Therefore, $|D(m)| = \operatorname{Hilb}(d, R_{q-1,1}/I) $.

    We can compute this Hilbert function explicitly. First, we resolve $R_{q-1,1}/I$ over $R_{q-1,1}$: 
    \[\mathcal{F}_\bullet: 0 \longleftarrow R_{q-1,1}/I \longleftarrow R_{q-1,1} \longleftarrow R_{q-1,1}(-q-1) \oplus R_{q-1,1}(-d)^{q-2} \oplus R_{q-1,1}(-2d)\longleftarrow \cdots.\]

    Hilbert functions satisfy an alternating sum on resolutions, so $\operatorname{Hilb}(d, R_{q-1,1}/I)$ is equal to
    \[\operatorname{Hilb}(d, R_{q-1,1}) - \Big( \operatorname{Hilb}(d, R_{q-1,1}(-q-1)) + (q-2) \operatorname{Hilb}(d, R_{q-1,1}(-d)) + \operatorname{Hilb}(d, R_{q-1,1}(-2d)) \Big)+ \cdots.\]

    To simplify this calculation, notice that $F_j$ for $j \geq 2$ will be comprised of free modules $R_{q-1,1}(-r)$ with $r > d$ (since twists appearing in $F_{i+1}$ will be sums of twists in $F_i$). Furthermore, 
    \[\operatorname{Hilb}\left(d,R_{q-1,1}(-r)\right) = \operatorname{Hilb}\left(-r+d, R_{q-1,1}\right) = 0 \text{ when } r > d,\] so none of the $F_j$ (for j $\geq 2$) contribute to the alternating sum. Similarly, $\operatorname{Hilb}\left(d, R_{q-1,1}(-2d)\right) = 0$, so that
    \[|D(m)| = \operatorname{Hilb}(d, R_{q-1,1}/I) = \operatorname{Hilb}(d, R_{q-1,1}) -\operatorname{Hilb}(d, R_{q-1,1}(-q-1)) - (q-2)\operatorname{Hilb}(d, R(-d)).\]

    Moreover, observe that
    \[\operatorname{Hilb}\left(d, R_{q-1,1}(-d)\right) = \dim_k \left((R_{q-1,1}(-d))_d\right) = \dim_k\left((R_{q-1,1})_{-d+d} \right)= \dim_k\left((R_{q-1,1})_0 \right)= 1.\] 
    Similarly, 
    \[\operatorname{Hilb}\left(d,R_{q-1,1}(-q-1)\right) = \dim_k \left((R_{q-1,1})_{d-q-1}\right) = \operatorname{Hilb}(d-q-1, R_{q-1,1}), \text{ so that }\]
    \[|D(m)| = \operatorname{Hilb}(d, R_{q-1,1}/I) = \operatorname{Hilb}(d, R_{q-1,1}) -\operatorname{Hilb}(d-q-1, R_{q-1,1}) - (q-2).\]

    Since we let $F_q(d) = |D(m)|$ for this choice of $m$, we have shown the first statement in the lemma. 

   It remains to show that the given closed form equals $F_q(d)$, which we will do by finding explicit formulas for each Hilbert function in the above statement via Lemma \ref{Lem:GenHilb}. Since $R_{q-1,1}$ has $q-1$ variables, we replace $i$ with $q-1$ in the formula and apply it to each degree we wish to compute:
   \[\operatorname{Hilb}(d, R_{q-1,1}) = \sum_{b = 0}^{\frac{d-1}{2}} {d - 2b + q - 2 \choose q-2} \text{ and }\]
    \[ \operatorname{Hilb}(d-q-1, R_{q-1,1}) = \sum_{b = 0}^{\floor*{\frac{d-q-1}{2}}} {d-q-1 - 2b + q-2\choose q - 2} = \sum_{b = 0}^{\floor*{\frac{d-q-1}{2}}} {d - 2b - 3 \choose q - 2}.\]

    Finally, we can reconstruct $F_q(d)$ using the corresponding sums of binomial coefficients so that 
    \begin{align*}
        F_q(d) & = \operatorname{Hilb}(d, R_{q-1,1}) - \operatorname{Hilb}(d-q-1, R_{q-1,1}) - (q-2) \\& = \sum_{b=0}^{\frac{d-1}{2}} {d - 2b + q - 2 \choose q - 2} - \sum_{b=0}^{\floor*{\frac{d-q-1}{2}}} {d-2b-3 \choose q-2} - q+2. 
    \end{align*}
    
    \end{proof}

\begin{example}
    \label{ex:Fq5}
    Let's consider setting $\mathbb{P}(1,1,2)$ with $d = 5$. This example is small enough that Macaulay2 is able to compute the Betti table (which is given as Table \ref{tab:BettiTable}), and we can compare with the bounds provided by the above lemma. 

    When $q = 2$, we get 
    \[F_2(5) = \sum_{b=0}^2 {5 -2b + 2 - 2 \choose 0} - \sum_{b=0}^1 {5 - 2b -3 \choose 2 - 2} - 2 +2\]
    \[ = \left( {5 \choose 0} + {3 \choose 0} + {1 \choose 0}\right) -\left( {2 \choose 0} + {0 \choose 0}\right) = 3 - 2 = 1,\]
    so the second row of the Betti table must also begin in the first column.

    For $q = 3$, we have
    \[F_3(5) = \sum_{b=0}^2 {5 - 2b + 1 \choose 1} - \sum_{b=0}^0 {5 - 2b - 3 \choose 1} - 3+2 \]
    \[= \left( {6 \choose 1} + {4 \choose 1} + {2 \choose 1} \right) -  {2 \choose 1} - 1 = 9,\]
    so the third row of the Betti table is guaranteed to have a block of nonzero entries beginning in column 9. 

    Notice that these are precisely the positions where each row of the Betti table begins! Although we haven't shown $F_q(d)$ is a sharp bound, we see that it is for this example, which serves as our first piece of evidence in the direction of Conjecture \ref{conjecture}. 
\end{example}

\begin{remark}
    The provided formula for $F_q(d)$ does not depend on $n$ in any way. This means that, for example, the third row of the Betti table corresponding to the 5th Veronese embedding of $\mathbb{P}(1,1,1,2)$ (or more generally, any $\mathbb{P}(1^n,2)$ with $n > 1$) will also be guaranteed to have a block of nonzero entries beginning in column $9$. 
\end{remark}

\subsection{Back of Betti Table for Odd Veronese Degree}\hfill
\label{subsec:BackResults}

Since $\overline{M}$ is a finite $\overline{S}$-module,  Hilbert's Syzygy Theorem states that the nonzero entries of $\beta^{\overline{S}}(\overline{M})$ (equivalently $\beta^S(M)$) must be contained within columns $0$ and $\dim_k(\overline{S})$. Recall that $S$ has $N + 1$ variables, so $\overline{S}$ has $N + 1- (n+1) = N-n$ variables, by construction. Therefore, the maximum column index of a nonzero Betti entry is $N-n$ for all $q$, providing an upper bound on the $B_q(d)$ values. 

In order to compute $B_q(d)$ explicitly, we similarly wish to select a monomial $m$ of the correct degree that yields nonzero Betti entries. In order to analyze the back of the Betti table, we want this $m$ to have a large number of annihilators, as the right-most nonzero Betti entry guaranteed by $m$ has column index $|A(m)|$. 

The monomial of $\overline{M}_q$ that is the lex-most with respect to $x_0 > \ldots > x_{n-1} > y$ will have the most variables raised to the power of $d-1$, up to symmetry. Such a monomial will be annihilated by the most $x_i$'s, and is thus a good choice for our $m$. We let $B_q(d) = |A(m)|$ for this particular $m$.

Instead of counting the number of annihilators of $m$, we will find a formula for the number of non-annihilators, which we call $|NA(m)|$, and use this to calculate $B_q(d)$. In particular,  
\[B_q(d) = |A(m)| = N-n- |NA(m)|.\]
We consider the cases where $q \leq n-1$, $q = n$, and $q = n+1$ separately. 

\begin{lemma}
    \label{lem:genBqd}
    Let $q \in [1, n-1]$ and $d \gg 0$. Then,  
    \[B_q(d) = N  - \operatorname{Hilb}(d, R_{n-q,1}) + \operatorname{Hilb}(q, R_{n-q,1}) -q-1, \] where $R_{n-q,1} = k[x_q, \; \ldots, x_{n-1}, \; y] \subseteq R$.
    Equivalently, 
    \[B_q(d) = N - \sum_{b = 0}^{\frac{d-1}{2}} {d-2b + n-q-1 \choose n-q-1} + \sum_{b=0}^{\floor*{\frac{q}{2}}} {-2b+n-1 \choose n-q-1} -q-1. \]
\end{lemma}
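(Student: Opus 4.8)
The plan is to run the EEL strategy with a single, carefully chosen monomial and to compute $|A(m)|$ by counting its complement among the degree-$1$ variables of $\overline{S}$. Concretely, I would take $m = x_0^{d-1}\cdots x_{q-1}^{d-1}x_q^{q}$, the largest element of $\overline{M}_q$ under the order $x_0 > \cdots > x_{n-1} > y$. A degree check gives $\deg(m) = \bigl(q(d-1)+q\bigr)/d = q$, and since $q \leq n-1$ is fixed while $d \gg 0$, every exponent is at most $d-1$, so $m \neq 0$ in $\overline{M}$; the same bound shows that any degree-$d$ divisor of $m$ must involve some $x_j$ with $j < q$ and hence annihilates $m$, so $D(m)\subseteq A(m)$ and $m$ is a legitimate EEL monomial. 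Because Lemma \ref{lem:dOddgens} guarantees $\overline{S}$ is generated in degree $1$ by the $N-n$ variables $z_{n+1},\ldots,z_N$, these split into annihilators and non-annihilators of $m$, so $B_q(d) = |A(m)| = (N-n) - |NA(m)|$, and the task reduces to computing $|NA(m)|$.

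The heart of the argument is characterizing $NA(m)$. I would write a generator as a non-pure-power degree-$d$ monomial $m_i = x_0^{c_0}\cdots x_{n-1}^{c_{n-1}}y^{b}$ and inspect the exponents of $m\cdot m_i$. The factor $x_j^{d-1}$ of $m$ for each $j < q$ forces $c_j = 0$ for every non-annihilator, and the factor $x_q^{q}$ forces $c_q \leq d-1-q$; the remaining exponents are automatically below $d$, since $m_i$ is not a pure power and $y$ contributes at most $y^{b}$ with $2b \le d < 2d$ (using $d$ odd). Hence $NA(m)$ consists exactly of the non-pure-power degree-$d$ monomials of $R_{n-q,1} = k[x_q,\ldots,x_{n-1},y]$ whose $x_q$-exponent is at most $d-1-q$.

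To count these, I first ignore the pure-power restriction: among all degree-$d$ monomials of $R_{n-q,1}$, those with $x_q$-exponent $\geq d-q$ biject with the degree-$q$ monomials of $R_{n-q,1}$ via division by $x_q^{d-q}$, so the number with $x_q$-exponent $\leq d-1-q$ equals $\operatorname{Hilb}(d,R_{n-q,1}) - \operatorname{Hilb}(q,R_{n-q,1})$. Then I correct for pure powers: the monomials $x_{q+1}^{d},\ldots,x_{n-1}^{d}$ all satisfy $c_q = 0 \leq d-1-q$ yet correspond to $z_{q+1},\ldots,z_{n-1}$ rather than to elements of $\{z_{n+1},\ldots,z_N\}$, so each of these $n-q-1$ monomials must be removed, while $x_q^{d}$ is already absent since its $x_q$-exponent exceeds $d-1-q$. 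This yields
\[|NA(m)| = \operatorname{Hilb}(d,R_{n-q,1}) - \operatorname{Hilb}(q,R_{n-q,1}) - (n-q-1),\]
and subtracting from $N-n$ gives the first displayed formula. Finally, applying Lemma \ref{Lem:GenHilb} with $i = n-q$, using $\lfloor d/2\rfloor = (d-1)/2$ for $d$ odd and the simplification $q-2b+n-q-1 = n-2b-1$ in the second Hilbert function, converts the expression into the binomial form, the $-n$ and $+(n-q-1)$ combining to $-q-1$.

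I expect the main obstacle to be precisely this pure-power bookkeeping: the characterization of $NA(m)$ in terms of exponent constraints is routine, but one must track carefully which degree-$d$ pure powers lie inside versus outside both the exponent constraint $c_q \le d-1-q$ and the generating set $\{z_{n+1},\ldots,z_N\}$, since that subtle accounting is exactly what produces the correction constant $(n-q-1)$ and distinguishes this range of $q$ from the boundary cases $q=n$ and $q=n+1$ handled separately.
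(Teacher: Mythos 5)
Your proposal is correct and takes essentially the same route as the paper: the same monomial $m = x_0^{d-1}\cdots x_{q-1}^{d-1}x_q^{q}$, the same reduction $B_q(d) = N-n-|NA(m)|$, and the same identification of $NA(m)$ with the degree-$d$ monomials of $R_{n-q,1}$ subject to the exponent constraint, minus pure powers. The only cosmetic difference is in the final count: you enumerate directly via the bijection given by division by $x_q^{d-q}$ and then remove the $n-q-1$ pure powers by hand, whereas the paper packages the same count as $\operatorname{Hilb}(d, R_{n-q,1}/I)$ for $I = \langle x_q^{d-q}, x_{q+1}^d, \ldots, x_{n-1}^d, y^d\rangle$ and evaluates it by an alternating sum over a minimal free resolution.
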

\begin{proof}
     Consider the monomial $m = x_0^{d-1}\cdots x_{q-1}^{d-1}x_q^{q}$. Since we assume $q \leq n-1$ and $d \gg 0$ (so that $d \geq q$), $m \in \overline{M}$. Every monomial of $\overline{M}$ that divides $m$ must contain at least one $x_i$ with $i < q$, so every divisor also annihilates $m$. This, combined with the fact that 
    \[ \deg(m) = \frac{q(d-1) + q}{d} = q,\] 
    implies that $m$ yields a block of nonzero entries in the $q$-th row of the Betti table.

    The degree $d$ elements of $R_{n-q,1}/ I$, where $I =\langle x_q^{d-q}, x_{q+1}^{d}, \ldots x_{n-1}^d, y^d \rangle$ are in bijection with the $z_i \in \overline{S}$ that do not annihilate $m$ in $\overline{M}$. Therefore, $|NA(m)| = \operatorname{Hilb}(d,R_{n-q,1}/I)$. As in the proof of Lemma \ref{lem:Fqd}, we will do so by resolving $R_{n-q,1}/I$ and taking an alternating sum of Hilbert functions. 

    $R_{n-q,1}/I$ has minimal free resolution: 
    \[ \mathcal{F}_\bullet : 0 \longleftarrow R_{n-q,1}/I \longleftarrow R_{n-q,1} \longleftarrow R_{n-q,1}(-d+q) \oplus R_{n-q,1}(-d)^{n-q-1} \oplus R_{n-q,1}(-2d) \longleftarrow F_2 \longleftarrow\cdots.\]

    As before, $\operatorname{Hilb}\left(d, R_{n-q,1}(-r)\right) = 0$ when $r > d$, so $\operatorname{Hilb}(d, R_{n-q,1}(-2d)) = 0$. Similarly, all free modules appearing in $F_{i+1}$ will have twists that are sums of twists appearing in $F_{i}$, meaning that all free modules in $F_2$ (and beyond) will not contribute to the Hilbert function computation in degree $d$. Thus, 
    \[ \operatorname{Hilb}(d, R_{n-q,1}/I) = \operatorname{Hilb}(d, R_{n-q,1}) - \operatorname{Hilb}\left(d, R_{n-q,1}(-d+q)\right) - (n-q-1)\cdot \operatorname{Hilb}\left(d, R_{n-q,1}(-d)\right).\]

    Recall that $\operatorname{Hilb}\left(d, R_{n-q,1} (-d)\right) = \operatorname{Hilb}(0,R_{n-q,1}) = 1$. It remains to compute $\operatorname{Hilb}(d, R_{n-q,1})$ and $\operatorname{Hilb}\left(d, R_{n-q,1}(-d+q)\right) = \operatorname{Hilb}(q, R_{n-q,1})$. We apply Lemma \ref{Lem:GenHilb} to each, which yields
    \[ \operatorname{Hilb}(d, R_{n-q,1}) = \sum_{b = 0}^{\frac{d-1}{2}} {d-2b + n-q-1 \choose n-q-1} \text{ and }\] 
    \[ \operatorname{Hilb}(q, R_{n-q,1}) = \sum_{b=0}^{\floor*{\frac{q}{2}}} {q-2b + n-q-1 \choose n-q-1} = \sum_{b=0}^{\floor*{\frac{q}{2}}} {-2b+n-1 \choose n-q-1}.\] 
    
    Putting the pieces together, we have
    \begin{align*}
        B_q(d) &=  N - n -\big(\operatorname{Hilb}(d, R_{n-q,1}) - \operatorname{Hilb}(q, R_{n-q,1}) - (n-q-1)\big) \\
        &= N - n - \Big(\sum_{b = 0}^{\frac{d-1}{2}} {d-2b + n-q-1 \choose n-q-1} - \sum_{b=0}^{\floor*{\frac{q}{2}}} {-2b+n-1 \choose n-q-1} - (n-q-1) \Big) \\
        &= N - \sum_{b = 0}^{\frac{d-1}{2}} {d-2b + n-q-1 \choose n-q-1} + \sum_{b=0}^{\floor*{\frac{q}{2}}} {-2b+n-1 \choose n-q-1} -q-1.
    \end{align*}
    \end{proof}

\begin{lemma}
\label{lem:secondLastBqd}
    For $d \gg 0$ and odd, the second to last row of the Betti table, where $q = n$, satisfies 
    \[B_n(d) = \begin{cases} 
      N-n & \text{if }  n \text{ is even} \\
      N-n-1 & \text{if }  n \text{ is odd}.
   \end{cases} \]
\end{lemma}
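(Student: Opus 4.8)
The plan is to reuse the strategy of Lemma \ref{lem:genBqd}: pick a degree-$n$ monomial $m\in\overline{M}_n$ that is lex-largest for the order $x_0>\cdots>x_{n-1}>y$, check $D(m)\subseteq A(m)$ so that $m$ contributes a block of nonzero entries in row $n$ extending to column $|A(m)|$, and then compute $B_n(d)=|A(m)|=(N-n)-|NA(m)|$ by counting the degree-$d$ generators $m_j$ of $\overline{S}_1$ that fail to annihilate $m$. The essential difference from the case $q\le n-1$ is that the ``corrector'' variable $x_q$ used there does not exist when $q=n$, so the surplus degree must be absorbed by $y$; since $\deg y=2$, this forces a split on the parity of $n$.

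Concretely, I would take
\[
m=\begin{cases} x_0^{d-1}\cdots x_{n-1}^{d-1}\,y^{n/2} & n\text{ even},\\[2pt]
x_0^{d-1}\cdots x_{n-2}^{d-1}\,x_{n-1}^{d-2}\,y^{(n+1)/2} & n\text{ odd}.\end{cases}
\]
A direct degree count gives $\deg m=nd$, so $m\in\overline{M}_n$, and for $d\gg0$ every exponent stays below $d$, so $m\neq 0$ in $\overline{M}$. For $D(m)\subseteq A(m)$, note any degree-$d$ monomial $m_j=\prod x_i^{c_i}y^g$ dividing $m$ satisfies $\sum c_i+2g=d>0$ together with the divisibility bounds $g\le(n+1)/2$ and $c_{n-1}\le d-2$ (odd case). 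One then checks that every such divisor raises some exponent of $m_j\cdot m$ to at least $d$: the only way to avoid this would be $c_i=0$ for all variables of $m$ at power $d-1$, leaving $c_{n-1}+2g=d$ with $c_{n-1}\le 1$, and both $c_{n-1}=0$ (forcing $2g=d$, impossible as $d$ is odd) and $c_{n-1}=1$ (forcing $g=(d-1)/2>(n+1)/2$, contradicting divisibility) are excluded. Hence every divisor annihilates $m$.

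The computation of $|NA(m)|$ is where the oddness of $d$ enters decisively. A non-annihilator $m_j=\prod x_i^{c_i}y^g$ of degree $d$ must keep every exponent of $m_j\cdot m$ strictly below $d$; for the chosen $m$ this forces $c_i=0$ at each variable sitting at power $d-1$ and constrains the one relaxed variable. When $n$ is even this pins $m_j$ to a pure power of $y$ with $2g=d$, impossible for $d$ odd, so $|NA(m)|=0$ and $B_n(d)=N-n$. When $n$ is odd the factor $x_{n-1}^{d-2}$ additionally permits $c_{n-1}=1$, and then $2g+1=d$ has the unique solution $g=(d-1)/2$; the resulting $m_j=x_{n-1}\,y^{(d-1)/2}$ is a genuine degree-$d$ generator, and since $(d-1)/2+(n+1)/2=(d+n)/2<d$ for $d\gg0$ we have $m_j\cdot m\neq0$, so it is the unique non-annihilator. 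Thus $|NA(m)|=1$ and $B_n(d)=N-n-1$.

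The main obstacle is choosing the correct monomial in the odd-$n$ case: one cannot saturate every $x_i$ at power $d-1$, because the residual degree $n$ would then have to be contributed by $y$ at an odd half-power. Recognizing that the lex-largest admissible monomial must instead lower $x_{n-1}$ to $d-2$, and that this single adjustment is exactly what creates the lone non-annihilator $x_{n-1}y^{(d-1)/2}$, is the crux. Once the monomial is fixed, the non-annihilator count reduces to the short parity observation that $2g=d$ is unsolvable while $2g+1=d$ has a unique solution.
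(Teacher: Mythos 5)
Your proposal is correct and follows essentially the same route as the paper: the same two lex-largest monomials split by the parity of $n$, the same degree count showing $\deg(m)=n$, and the same identification of $x_{n-1}y^{(d-1)/2}$ as the unique non-annihilator in the odd-$n$ case (the paper's verification of $D(m)\subseteq A(m)$ is slightly terser but rests on the identical parity observation that $2g=d$ is unsolvable for $d$ odd).
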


\begin{proof}
    The largest monomial with respect to lex on $x_0 > \ldots > x_{n-1} > y$ in $\overline{M}_{n}$ depends on the parity of $n$, so we consider the cases where $n$ is even and odd separately.

    First, we consider the case where $n$ is even. Let $m$ be the monomial $x_0^{d-1} \cdots x_{n-1}^{d-1} y^{\frac{n}{2}}$. We assume $d \gg 0$, so $d > \frac{n}{2}$, and this monomial is nonzero in $\overline{M}$. Observe that every $z_i \in \overline{S}$ corresponds to a monomial containing at least one of the $x_i$'s, so that every $z_i$ annihilates $m$ in $\overline{M}$. In particular, $|NA(m)| = 0$ and $D(m)$ must be contained in $A(m)$. Since
    \[ \deg(m) = \frac{n(d-1) + 2\left(\frac{n}{2}\right)}{d} = \frac{nd - n + n}{d} = n,\] $m$ yields a nonzero block in row $q = n$, and $B_n(q) = N - n - |NA(m)| = N-n$ when $n$ is even. 

    If $n$ is odd, we consider the monomial $m = x_0^{d-1} \cdots x_{n-2}^{d-1} x_{n-1}^{d-2} y^{\frac{n+1}{2}}$. Unlike the previous case, there is a $k$-algebra generator of $\overline{M}$ that doesn't annihilate $m$: $x_{n-1}y^{\frac{d-1}{2}}$. Let $z_j$ be the variable corresponding to this monomial. Observe that all other $z_i \in \overline{S}$ correspond to a monomial containing either an $x_k$ with $k \neq n-1$ or $x_{n-1}^b$ with $b \geq 2$, so that every $z_i$ with $i \neq j$ must annihilate $m$.
    
    Note that, since $d \gg 0$, $x_{n-1}y^{\frac{d-1}{2}}$ does not divide $m$, and $D(m) \subseteq A(m)$. Lastly, since $\deg(m) = n$, we can conclude that $B_n(d) = N - n - |NA(m)| = N-n-1$ when $n$ is odd. 
\end{proof}

\begin{lemma}
\label{lem:lastBqd}
    For $d \gg 0$ and odd, the last row of the Betti table, where $q= n+1$, extends to the farthest possible column, so that $B_{n+1}(d) = N-n$.
\end{lemma}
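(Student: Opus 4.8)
The plan is to exhibit, for each parity of $n$, a single monomial $m \in \overline{M}_{n+1}$ that is annihilated by \emph{every} generator of $\overline{S}_1$. Since $\overline{S}$ has exactly $N-n$ variables, Hilbert's Syzygy Theorem forces all nonzero Betti entries into columns $0$ through $N-n$, so $N-n$ is the largest column index possible in any row. If I can produce an $m$ with $A(m) = \overline{S}_1$, then automatically $D(m) \subseteq A(m)$ and $|A(m)| = N-n$, so Lemma \ref{Lem:EEL} yields $\beta_{N-n,\,N-n+(n+1)} \neq 0$; combined with the upper bound this gives $B_{n+1}(d) = N-n$ exactly. Thus everything reduces to constructing such an $m$.

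First I would record the crucial consequence of $d$ being odd: by Lemma \ref{lem:dOddgens} there is no pure-power generator in degree $d$ involving only $y$, so every generator of $\overline{S}_1$ corresponds to a degree-$d$ monomial containing at least one $x_i$. Consequently, if $m$ has every $x_i$ appearing to the power $d-1$, then multiplying $m$ by any generator pushes some $x_i$ to exponent $\geq d$, killing the product in $\overline{M}$. The only question is whether a degree-$(n+1)d$ monomial with all $x_i^{d-1}$ fits inside $\overline{M}$, and this is governed by parity. For $n$ odd I would take $m = x_0^{d-1}\cdots x_{n-1}^{d-1}\,y^{(d+n)/2}$; a direct count gives $\deg m = (n+1)d$, and since $d$ is odd the exponent $(d+n)/2$ is an integer, with $(d+n)/2 \leq d-1$ once $d \geq n+2$, so $m \neq 0$ in $\overline{M}$. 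By the observation above every generator annihilates $m$, hence $|NA(m)| = 0$. For $n$ even the naive exponent $(d+n)/2$ fails to be an integer, so I would instead lower one $x$-exponent and set $m = x_0^{d-1}\cdots x_{n-2}^{d-1} x_{n-1}^{d-2}\, y^{(d+n+1)/2}$; here $d+n+1$ is even, one checks $\deg m = (n+1)d$, and $m \neq 0$ once $d \geq n+3$.

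The main obstacle is the $n$ even case, since lowering the exponent of $x_{n-1}$ to $d-2$ creates slack that could in principle let a generator survive. The single generator to worry about is $g = x_{n-1}\,y^{(d-1)/2}$, the unique degree-$d$ generator whose only $x$-factor is one copy of $x_{n-1}$. I would verify directly that $g \cdot m$ has $y$-exponent $(d+n+1)/2 + (d-1)/2 = d + n/2 \geq d$, so it is divisible by $y^d$ and vanishes in $\overline{M}$; every other generator either contains some $x_i$ with $i \leq n-2$ (forcing $x_i^{\geq d}$) or contains $x_{n-1}$ to a power at least $2$ (forcing $x_{n-1}^{\geq d}$), and so also annihilates $m$. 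This confirms $A(m) = \overline{S}_1$ in both parities, and therefore $B_{n+1}(d) = N-n$, the farthest possible column, as claimed.
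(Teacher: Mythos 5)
Your proposal is correct and follows essentially the same route as the paper: the same two monomials $x_0^{d-1}\cdots x_{n-1}^{d-1}y^{(d+n)/2}$ (for $n$ odd) and $x_0^{d-1}\cdots x_{n-2}^{d-1}x_{n-1}^{d-2}y^{(d+n+1)/2}$ (for $n$ even), the same observation that every generator of $\overline{S}_1$ involves some $x_i$, and the same check that the lone candidate non-annihilator $x_{n-1}y^{(d-1)/2}$ still kills $m$ via the $y^d$ relation. The only cosmetic difference is that the paper phrases the $n$ even verification as a contradiction argument rather than a direct computation of the surviving $y$-exponent.
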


\begin{proof}
    As in the previous lemma, we consider the cases where $n$ is even and odd separately. 

    First, suppose $n$ is odd. Consider the monomial $m = x_0^{d-1}\cdots x_{n-1}^{d-1}y^{\frac{d+n}{2}}.$ Note that $\frac{d+n}{2}$ is an integer as we assume $d$ is also odd. Additionally, we require $d \gg 0$, so we may assume $\frac{d+n}{2} < d$. Observe that every $z_i \in \overline{S}$ annihilates $m$ in $\overline{M}$ as each $z_i$ corresponds to a monomial containing at least one of the $x_i$. Therefore, $D(m) \subseteq A(m)$ and $|NA(m)| = 0$. Since
    \[ \deg(m) = \frac{n(d-1) + d+n}{d} = \frac{nd - n + d + n}{d} = \frac{(n+1)d}{d} = n+1,\] $B_{n+1}(q) = N-n$ when $n$ is odd. 

    Next, assume $n$ is even, and let $m = x_0^{d-1} \cdots x_{n-2}^{d-1}x_{n-1}^{d-2}y^{\frac{d+n+1}{2}}.$ Unlike the previous case, a bit of work is required to show that every $z_i \in \overline{S}$ annihilates $m$ in $\overline{M}$. Suppose, for contradiction, that there were some $z_j$ that did not annihilate $m$. This $z_j$ would need to correspond to a monomial of the form $x_{n-1}y^b$ where $b < \frac{d-n-1}{2}$ and $1 + 2b = d$. However, 
    \[b < \frac{d-n-1}{2} \implies 1 + 2b < 1 + d-n-1 = d-n,\] so such an element cannot exist. Thus, all the $z_i \in \overline{S}$ must annihilate $m$ (so that $|NA(m)| = 0$), and, again, $D(m) \subseteq A(m)$ trivially. In this case, we have
    \[\deg(m) = \frac{(n-1)(d-1) + (d-2) + (d+n+1)}{d} = \frac{nd + d}{d} = n+1,\] so $B_{n+1}(q) = N-n$ when $n$ is even as well. 
\end{proof}
    
    \begin{remark}
        Lemma \ref{lem:lastBqd} provides a sharp characterization for where the last row of the Betti table terminates; Hilbert's Syzygy Theorem yields that a row cannot extend beyond column $N-n$.
    \end{remark} 
    
    The following example shows how each of the lemmas in this section can be applied. 

    \begin{example}
        Let's return to our running example of $\mathbb{P}(1,1,2)$ with $d = 5$. 

        Lemma \ref{lem:lastBqd} states that row $n+1 = 3$ of the Betti table must extend as far right as possible (i.e. through column $N-n$). In this setting, $N = 12$ (since $S$ has $13$ total variables), so we guarantee that row $3$ of the Betti table extends through column $10$. Since, $n+1$ is odd, Lemma \ref{lem:secondLastBqd} yields that row $2$ must extend through column $10$ as well. Lastly, we can use Lemma \ref{lem:genBqd} to compute $B_1(5)$:
        \[B_1(5) = 12 - \sum_{b = 0}^{2} {5-2b + 0 \choose 0} + \sum_{b=0}^{0} {-2b+ 1 \choose 0} -2 = 12 - (1+1+1) + 1 -2 = 8,\]
        so that row $1$ of the Betti table must extend at least through column $8$. 

        The Betti table for this example (Table \ref{tab:BettiTable}), shows that each row terminates at exactly these locations. 
    \end{example}

    If we momentarily take for granted that the nonzero blocks corresponding to the monomials used to compute $F_q(d)$ and $B_q(d)$ overlap (which is shown in the next section), combining the above example with Example \ref{ex:Fq5} gives a complete picture of the Betti table. Notably, the bounds found for the left and right-most nonzero Betti entries of each row are sharp, providing further evidence towards Conjecture \ref{conjecture}. 

    \begin{remark}
            The methods described in this article are particularly useful in cases where computing the Betti table explicitly is too time-consuming. This happens often; for example, the $7$th Veronese embedding of $\mathbb{P}(1,1,1,2)$ already exceeds the computational capabilities of Macaulay2. 
    \end{remark}

\subsection{Overlap for Odd Veronese Degree}\hfill
\label{subsec:Overlap}

Sections \ref{subsec:FrontResults} and \ref{subsec:BackResults} provide formulas for the left and right-most Betti entries that we guarantee to be nonzero. However, since different monomials were used to determine each of these positions, there is one final step: ensuring that all Betti entries between the given $F_q(d)$ and $B_q(d)$ are nonzero as well. In this section, we will again use a row-by-row analysis to show the blocks of nonzero Betti entries corresponding to the two monomials overlap. 

\begin{lemma}
\label{lem:oddOverlap}
    Assume $d \gg 0$ and odd. For any $1 \leq q \leq n+1$, $\beta_{i, i+q} \neq 0$ for $F_q(d) \leq i \leq B_q(d)$. 
\end{lemma}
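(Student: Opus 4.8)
The plan is to show that the two blocks of nonvanishing Betti entries produced in Sections \ref{subsec:FrontResults} and \ref{subsec:BackResults} fit together without a gap. For a fixed row $q$, let $m_1$ denote the $y$-heavy front monomial used to define $F_q(d)$ (from Lemma \ref{lem:F1d} or \ref{lem:Fqd}) and let $m_2$ denote the $x$-heavy back monomial used to define $B_q(d)$ (from Lemma \ref{lem:genBqd}, \ref{lem:secondLastBqd}, or \ref{lem:lastBqd}). Since each of $m_1, m_2$ satisfies $D(\cdot) \subseteq A(\cdot)$, Lemma \ref{Lem:EEL} gives nonvanishing entries for all $i$ in the intervals $[\,F_q(d),\, |A(m_1)|\,]$ and $[\,|D(m_2)|,\, B_q(d)\,]$, respectively. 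Because $m_2$ is chosen to maximize the number of annihilators, $B_q(d) = |A(m_2)| \ge |A(m_1)|$, so the union of these two integer intervals equals the full range $[F_q(d), B_q(d)]$ precisely when they are not separated by a gap, i.e. when $|D(m_2)| \le |A(m_1)| + 1$. The whole lemma thus reduces to this single inequality for each $q$.

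The main step is to bound the two quantities against $N$. Since $\overline{S}$ is standard graded with exactly $N-n$ variables (Lemma \ref{lem:dOddgens}), every generator of $\overline{S}_1$ either annihilates $m_1$ or does not, so $|A(m_1)| = N - n - |NA(m_1)|$ and the target inequality becomes
\[ |D(m_2)| + |NA(m_1)| \le N - n + 1. \]
I would bound each summand as a count of degree-$d$ monomials in a proper subring. The non-annihilators of the $y$-heavy $m_1$ contain no $y$ and none of the low-index variables $x_0, \ldots, x_{q-3}$ occurring to the power $d-1$ in $m_1$, so they lie in $k[x_{q-2}, \ldots, x_{n-1}]$, giving $|NA(m_1)| = O(d^{\,n-q+1}) = O(d^{\,n-1})$ for $q \ge 2$. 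Dually, for $2 \le q \le n-1$ the divisors of $m_2$ lie in $k[x_0, \ldots, x_q]$, giving $|D(m_2)| = O(d^{\,q}) = O(d^{\,n-1})$, and for $q = n$ the divisors of $m_2$ carry only a bounded power of $y$, so again $|D(m_2)| = O(d^{\,n-1})$. Since $N$ grows like $d^n$ (Remark \ref{remark:asymptoticsN}), both summands are $o(N)$ and the displayed inequality holds for all $d \gg 0$.

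Two rows require separate, easier handling. For $q = 1$ the front and back constructions coincide: the monomial $x_0^{d-1}x_1$ simultaneously realizes $F_1(d) = |D| = 1$ and $B_1(d) = |A|$, so a single application of Lemma \ref{Lem:EEL} already covers $[1, B_1(d)]$ and no overlap argument is needed. For $q = n+1$ the front monomial $m_1 = x_0^{d-1}\cdots x_{n-2}^{d-1} x_{n-1}^{n+1} y^{d-1}$ is annihilated by every generator of $\overline{S}_1$: its only potential non-annihilator would be a degree-$d$ monomial in $x_{n-1}$ alone, namely the pure power $x_{n-1}^d$, which is not a generator of $\overline{S}_1$. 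Hence $|NA(m_1)| = 0$, so $|A(m_1)| = N - n = B_{n+1}(d)$ by Lemma \ref{lem:lastBqd}, and the front monomial alone covers $[F_{n+1}(d), N-n]$.

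I expect the main obstacle to be the boundary rows $q = n$ and $q = n+1$, where the distance from $N$ is small and the monomials of Lemmas \ref{lem:secondLastBqd} and \ref{lem:lastBqd} depend on the parity of $n$. There one cannot simply invoke the generic $O(d^{n-1})$ estimate, because one of $|NA(m_1)|$ or $N - |A(m_1)|$ drops to $O(d)$ or $O(1)$, and the monomial-by-monomial count of non-annihilators (respectively divisors) must be carried out explicitly, as in the proofs of those lemmas. Away from the boundary the argument is uniform, and the only quantitative input is the comparison of polynomial growth rates supplied by Lemma \ref{Lem:GenHilb} and Remark \ref{remark:asymptoticsN}.
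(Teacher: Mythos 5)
Your proposal is correct and follows essentially the same strategy as the paper: reduce the lemma to the single inequality comparing $|D(m_2)|$ with $|A(m_1)|$ row by row, dispose of $q=1$ by noting $m_1=m_2$, dispose of $q=n+1$ by showing $|NA(m_1)|=0$ so the front monomial alone spans the row, and handle the middle rows by comparing polynomial growth rates ($|D(m_2)|=O(d^q)$ and $|NA(m_1)|=O(d^{n-q+1})$ against $N\sim d^n$). The one substantive divergence is row $q=n$: the paper carries out an explicit set-theoretic count there, showing $D(m_2)\setminus A(m_1)$ has exactly $d-n-1$ elements while $A(m_1)\setminus D(m_2)$ has at least $\binom{d-3}{n-1}+\binom{d-5}{n-1}$, whereas your bound $|D(m_2)|=O(d^{n-1})$ (valid because the divisors of $m_2$ carry a bounded power of $y$) combined with $|NA(m_1)|=O(d)$ already gives $|D(m_2)|+|NA(m_1)|=o(N)$ and closes the case more simply. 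Your final paragraph therefore hedges unnecessarily: the worry that ``one cannot simply invoke the generic $O(d^{n-1})$ estimate'' at $q=n$ because $|NA(m_1)|$ drops to $O(d)$ has it backwards --- a smaller $|NA(m_1)|$ only makes the target inequality easier, and the estimates you already wrote down in your second paragraph complete the proof of that case without any further monomial-by-monomial counting. The only genuinely delicate boundary row is $q=n+1$, which you resolve correctly.
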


\begin{proof}
    We aim to show that, for each $q$, $|D(m_2)| \leq |A(m_1)|$ where $m_1$ and $m_2$ are the monomials used to compute $F_q(d)$ and $B_q(d)$, respectively. We consider the rows $q = 1$, $2 \leq q \leq n-1$, $q = n$, and $q=n+1$ separately, as each case has a distinct $m_1$, $m_2$ pair. 

    \textbf{Row 1:} 
    When $q =1$, $m_1 = m_2 = x_0^{d-1}x_1$ (from Lemmas \ref{lem:F1d} and \ref{lem:genBqd}, respectively), so there is no overlap to check in this row.

    \textbf{Rows} $\bm{2 \leq q \leq n-1\colon}$
    The corresponding monomials are $m_1 = x_0^{d-1}\cdots x_{q-3}^{d-1}x_{q-2}^qy^{d-1}$ and $m_2 = x_0^{d-1}\cdots x_{q-2}^{d-1}x_q^q$ (from Lemmas \ref{lem:Fqd} and \ref{lem:genBqd}). We first compute $|D(m_2)|$. We can apply the methods from Lemma \ref{lem:Fqd} and realize the divisors of $m_2$ as the degree $d$ elements in $R_{q+1,0}/ I:= k[x_0, \ldots, x_q] /\langle x_0^d, \ldots, x_{q-1}^d, x_q^{q+1}\rangle$. As before, we have $|D(m_2)| = \operatorname{Hilb}(d, R_{q+1,0}/I)$, which can be computed via an alternating sum on the resolution of $R_{q+1,0}/I$, so that
    \[\operatorname{Hilb}(d, R_{q+1,0}/I) = \operatorname{Hilb}(d, R_{q+1,0}) - \operatorname{Hilb}\left(d, R_{q+1,0}(-q-1)\right) - q. \]
    $R_{q+1,0}$ is a standard graded polynomial ring with $q+1$ variables, so the usual combinatorial formulas apply, and 
    \[|D(m_2)| = {d+q \choose q} - {d-1 \choose q} - q.\]
    
    Next, we wish to compute $|A(m_1)|$. Applying the methods of Lemma \ref{lem:genBqd}, we can realize the set of non-annihilators in $\overline{M}$ as $R_{n-q+2,0}/I':=  k[x_{q-2}, \ldots, x_{n-1}] /\langle x_{q-2}^{d-q},x_{q-1}^d, \ldots, x_{n-1}^d \rangle$. In particular, $|NA(m_1)| = \operatorname{Hilb}(d, R_{n-q+2,0}/I')$. Applying the same techniques, we have 
    \[|NA(m_1)| = \operatorname{Hilb}(d, R_{n-q+2,0}) - \operatorname{Hilb}\left(d, R_{n-q+2,0}(-d+q)\right) - (n-q+1).\]
    
    $R_{n-q+2,0}$ is a standard graded polynomial with $n-q+2$ variables, and so the usual combinatorial formulas yield 
    \[|A(m_1)| = N - n - |NA(m_1)| = N-n- {d + n - q + 1 \choose n - q + 1} + {n+1 \choose n-q+1} + (n-q+1).\]
    
    Lastly, we wish to show that $|D(m_2)| \leq |A(m_1)|$. We assume $d \gg 0$, so it suffices to show this is true asymptotically. Since $q$, $n$, ${n+1 \choose n-q+1}$, and $(n-q+1)$ do not depend on $d$, these terms are dominated by the others in their respective formulas. 
    
    Recall that $N$ is asymptotically $d^n$ (Remark \ref{rmk:asypN}) so that, for $d\gg 0$,
    \[N = ad^n + (\text{lower order terms}),\] where $a$ is some constant.
    It is also known that so that ${n \choose k}$ is asymptotically $n^k$, so, for $d \gg 0$, 
    \[{d+n-q+1 \choose n-q+1} = a'd^{n-q+1} +  (\text{lower order terms}), \] where $a'$ is also some constant. Therefore, in our setting, 
    \[|A(m_1)| = (ad^n + \text{ lower terms})- \left(a'd^{n-q+1}+\text{ lower terms}\right).\]
    Since $q>1$, $n-q+1<n$ and $|A(m_1)| = ad^{n} +$ (lower order terms). 
    
    Similarly, 
    \[ |D(m_2)| =  (bd^q + \text{ lower terms}) - (b'd^q + \text{ lower terms})\]
    for some constants $b$ and $b'$.  In particular, $|D(m_2)| = b''d^q +$ (lower order terms), where $b'' = b-b'$. 
    
     Since we assume $q \leq n-1$, we have
    \[|D(m_2)| =b''d^q + (\text{lower terms}) \leq ad^{n} + (\text{lower terms}) = |A(m_1)|\] 
    for $d \gg 0$, as desired. 

    \textbf{Row} $\bm{n}\colon$
    The monomial used to compute $F_n(d)$ is the same as the previous case (with $q = n$), so we have $m_1 = x_0^{d-1} \cdots x_{n-3}^{d-1}x_{n-2}^n y^{d-1}$. The monomial $m_2$ depends on the parity of $n$, so we consider two cases. 
    
    If $n$ is even, we have $m_2 = x_0^{d-1} \cdots x_{n-1}^{d-1}y^{\frac{n}{2}}$ (from Lemma \ref{lem:secondLastBqd}). In order to show $|D(m_2)| \leq |A(m_1)|$, we first determine how many elements are in $D(m_2)$ that are not in $A(m_1)$. The only $k$-algebra generators of $\overline{M}$ that do not annihilate $m_1$ have the form $x_{n-2}^ax_{n-1}^b$ with $a<d-n$ and $b < d$ with $a+b=d$. There are $d-n-1$ total such monomials ($x_{n-2}^1x_{n-1}^{d-1}, \ldots, x_{n-2}^{d-n-1}x_{n-1}^{n+1}$), and they all divide $m_2$. Thus, there are exactly $d-n-1$ monomials that are in $D(m_2)$ but not $A(m_1)$, so that $|D(m_2)| - (d-n-1) \leq |A(m_1)|.$
    
    If there are more than $d-n-1$ elements that are in $A(m_1)$ but not $D(m_2)$, we will have the desired inequality. Generators of $\overline{M}$ that do not divide $m_2$ must have the form $\mathbf{x^a}y^b$ where $b > \frac{n}{2}$ and $|\mathbf{a}| + 2b = d$ (for $\mathbf{a} = (a_0, \ldots, a_{n-1})$ some exponent vector with each $a_i < d$). 
    
    If we consider the case where $b = \frac{n}{2}+1$, any possible $\mathbf{a}$ with $|\mathbf{a}| =d-n-2$ will yield such a monomial. Using the standard graded combinatorial formulas, there are ${d-n-2+n-1 \choose n-1} = {d-3 \choose n-1}$ choices for $\mathbf{a}$ satisfying the necessary requirements. When $b = \frac{n}{2}+2$, we can similarly determine that there are an additional ${d-5 \choose n-1}$ such elements.  Since $n \geq 2$, there are at least ${d-3 \choose 1} + {d - 5 \choose 1} = 2d-8$ elements in $A(m_1)$ that are not in $D(m_2)$, so \[|D(m_2)| -(d-n-1) + 2d-8 \leq |A(m_1)|.\] 
   
    For $d \gg 0$, $2d - 8 \geq d-n-1,$ which yields that $|D(m_2)| \leq |A(m_1)|$, as desired. 
    
    If $n$ is odd,  we have $m_2 = x_0^{d-1} \ldots x_{n-2}^{d-1} x_{n-1}^{d-2}y^{\frac{n+1}{2}}$. An identical argument to the even case yields that there are $d-n-2$ elements in $D(m_2)$ that are not in $A(m_1)$, while there are at least ${d-4 \choose n-1} + {d-6 \choose n-1}$ elements in $A(m_1)$ that are not in $D(m_2)$. Again, we can conclude that $|D(m_2)| \leq |A(m_1)|$.

    \textbf{Row} $\bm{n+1}\colon$
    Lemma \ref{lem:lastBqd} yields that row $q = n+1$ of the Betti table extends through column $N-n$. The monomial for $m_1$ is the same as what's used in the above rows, so we get the same formula for $|NA(m_1)|$, 
    \[|NA(m_1)| = { d + n - q + 1 \choose n - q + 1} - {n + 1 \choose n - q + 1} - (n-q+1).\] However, for this row, we have $q = n+1$, so we can simplify the formula to 
    \[= {d + n - (n+1) + 1 \choose n - (n+1) + 1} - {n+1 \choose n - (n+1) + 1} - (n-(n+1) + 1) = {d \choose 0} - {n+1 \choose 0} - 0 = 0.\]
    
    Therefore, the block of nonzero entries corresponding to this $m_1$ must extend through column $N-n-0 = N-n$. Thus, the entire row can be spanned by this sole monomial, and there's no need to check overlap!

    The above row-by-row analysis shows that, for row $q \in [1, n+1]$, every Betti entry between column $F_q(d)$ and $B_q(d)$ must also be nonzero. Concretely, $\beta_{i, i+q} \neq 0$ for $F_q(d) \leq i \leq B_q(d)$. 
\end{proof}

\begin{remark}
    Observe that the row $n+1$ argument in the above lemma gives an alternate proof of Lemma \ref{lem:lastBqd}. In the original proof, we found the largest monomial with respect to lex on $x_0 > \ldots> x_{n-1} > y$ and showed it was annihilated by $z_i \in \overline{S}$; however, the above argument shows we could have instead considered $x_0^{d-1} \cdots x_{n-2}^{d-1}x_{n-1}^{n+1}y^{d-1}$.
\end{remark}

\subsection{Results for Even Veronese Degrees}\hfill
\label{subsec:evenVerDeg}

In this section, we will state the equivalent results about $F_q(d)$ and $B_q(d)$ when $d$ is even. This case introduces less novelty, and the techniques are the same as in the corresponding proofs for $d$ odd, so we omit some of the repeated details. 

Lemma \ref{lem:F1d} shows that $F_1(d) = 1$. The remaining $F_q(d)$ are given by the following lemma.

\begin{lemma}
\label{lem:evenFqd}
    For $d$ even and $2 \leq q \leq n$, 
    \[F_q(d) = \operatorname{Hilb}(d, R_{q,1}) - \operatorname{Hilb}(d-q-2, R_{q,1}) -q.\]
    Equivalently, 
    \[F_q(d) = \sum_{b=0}^{\frac{d}{2}} { d-2b+q-1 \choose q-1} - \sum_{b=0}^{\floor{\frac{d-q-2}{2}}} {d-2b-3 \choose q-1} -q.\]
\end{lemma}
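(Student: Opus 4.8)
The plan is to mirror the proof of Lemma \ref{lem:Fqd}, adapting it to the even Artinian reduction $\overline{M} = M/\langle x_0^d, \ldots, x_{n-1}^d, y^{d/2}\rangle$. Since $y^{d/2}$ (rather than $y^d$) now vanishes in $\overline{M}$, the $y$-exponent of any nonzero monomial is bounded by $\tfrac{d}{2}-1$, which forces a different $y$-heaviest monomial than in the odd case. First I would take
\[ m = x_0^{d-1} \cdots x_{q-2}^{d-1} x_{q-1}^{q+1} y^{\frac{d}{2}-1}, \]
and check that $\deg(m) = (q-1)(d-1) + (q+1) + 2(\tfrac{d}{2}-1) = qd$, so $m \in \overline{M}_q$. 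I would then verify $D(m) \subseteq A(m)$: any degree-$d$ generator of $\overline{S}_1$ dividing $m$ must involve one of $x_0, \ldots, x_{q-2}$ or $y$, because the only degree-$d$ monomial supported solely on $x_{q-1}$ is $x_{q-1}^d$, which fails to divide $m$ (as $m$ carries only $x_{q-1}^{q+1}$ with $q+1 < d$ for $d \gg 0$). Multiplying any such divisor into $m$ pushes the relevant exponent up to $d$ (for an $x_i$) or $\tfrac{d}{2}$ (for $y$), hence kills $m$ in $\overline{M}$. Setting $F_q(d) := |D(m)|$ as in the front-of-table construction completes the setup.

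Next I would count $|D(m)|$ by identifying the divisors of $m$ with the degree-$d$ monomials of the quotient $R_{q,1}/J$, where $J = \langle x_0^d, \ldots, x_{q-2}^d,\, x_{q-1}^{q+2},\, y^{d/2}\rangle$ records exactly the exponent bounds imposed by $m$ together with the vanishing relations of $\overline{M}$. Thus $|D(m)| = \operatorname{Hilb}(d, R_{q,1}/J)$, which I would compute via the alternating sum of Hilbert functions along the Koszul resolution of the complete intersection $R_{q,1}/J$ (generated by pure powers of the $q+1$ variables of $R_{q,1}$). The first syzygy module contributes $q$ copies of $R_{q,1}(-d)$ — the $q-1$ generators $x_i^d$ together with $y^{d/2}$, which also has degree $d$ — and one copy of $R_{q,1}(-(q+2))$. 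Since $\operatorname{Hilb}(d, R_{q,1}(-d)) = \operatorname{Hilb}(0, R_{q,1}) = 1$ and $\operatorname{Hilb}(d, R_{q,1}(-(q+2))) = \operatorname{Hilb}(d-q-2, R_{q,1})$, while every twist in $F_2$ and beyond exceeds $d$ and so contributes nothing in degree $d$, I obtain
\[ F_q(d) = \operatorname{Hilb}(d, R_{q,1}) - \operatorname{Hilb}(d-q-2, R_{q,1}) - q. \]
The equivalent binomial form then follows by substituting the closed formula of Lemma \ref{Lem:GenHilb} with $i = q$ degree-$1$ variables into each Hilbert function, using that $\floor{d/2} = d/2$ since $d$ is even.

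The routine work is identical in spirit to the odd case, so the genuine obstacle is entirely in the bookkeeping forced by the even reduction: choosing $m$ so that the induced ideal $J$ has precisely the generator degrees reproducing the claimed formula. The two delicate points are (i) that the degree-$2$ variable makes $y^{d/2}$ a \emph{degree-$d$} Koszul generator, so it merges with the $q-1$ generators $x_i^d$ to give the clean count of $q$ (the source of the $-q$ term), and (ii) that the single short generator $x_{q-1}^{q+2}$ sits in degree $q+2$, whose twist yields the $\operatorname{Hilb}(d-q-2, R_{q,1})$ term. To close the argument I would confirm that no higher Koszul term contributes in degree $d$: every degree appearing in $F_2$ equals either $2d$ or $d+q+2$, both of which exceed $d$ for $d \gg 0$, so $\operatorname{Hilb}(d, R_{q,1}(-r)) = \operatorname{Hilb}(d-r, R_{q,1}) = 0$ there.
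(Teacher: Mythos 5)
Your proposal is correct and follows essentially the same route as the paper: the same monomial $m = x_0^{d-1}\cdots x_{q-2}^{d-1}x_{q-1}^{q+1}y^{d/2-1}$, the same identification of $D(m)$ with the degree-$d$ monomials of $R_{q,1}/\langle x_0^d,\ldots,x_{q-2}^d,x_{q-1}^{q+2},y^{d/2}\rangle$, and the same alternating-sum computation along the Koszul resolution. Your additional verifications (the degree count and $D(m)\subseteq A(m)$) are details the paper omits by reference to the odd case, and your bookkeeping of the Koszul twists matches the paper's exactly.
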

\begin{proof}
    The monomial used to compute $F_q(d)$ for $2 \leq q \leq n$ is $m = x_0^{d-1}\cdots x_{q-2}^{d-1}x_{q-1}^{q+1}y^{\frac{d}{2}-1}$. $D(m)$ is in bijection with the degree $d$ monomials in $R_{q,1}/I$ for $R_{q,1} = k[x_0, \ldots, x_{q-1}, y]$ and $I = \langle x_0^d, \ldots, x_{q-2}^d, x_{q-1}^{q+2}, y^{\frac{d}{2}} \rangle$. The minimal free resolution is
    \[0 \longleftarrow R_{q,1}/I \longleftarrow R_{q,1} \longleftarrow R_{q,1}(-q-2) \oplus R_{q-1}(-d)^q \longleftarrow \cdots.\]
    No other terms in the free resolution contribute to the alternating sum on Hilbert functions in degree $d$, so 
    \[|D(m)| = F_q(d) = \operatorname{Hilb}(d, R_{q,1}) - \operatorname{Hilb}(d-q-2, R_{q,1}) - q.\]
    Applying Lemma \ref{Lem:GenHilb} to each Hilbert function in the above statement yields that this is equivalent to the desired closed form. 
\end{proof}

\begin{lemma}
\label{lem:evenBqd1}
    For $d$ even, and $1 \leq q \leq n-1$, 
    \[B_q(d) = N -\operatorname{Hilb}(d, R_{n-q, 1}) + \operatorname{Hilb}(q, R_{n-q, 1}) -q.\]
    Equivalently, 
    \[B_q(d) = N - \sum_{b=0}^{\frac{d}{2}} { d-2b+n-q-1 \choose n-q-1} + \sum_{b=0}^{\floor{\frac{q}{2}}} {-2b+n-1 \choose n-q-1} -q.\]
\end{lemma}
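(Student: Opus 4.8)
The plan is to mirror the proof of Lemma~\ref{lem:genBqd} for the odd case, tracking the single place where the even parity changes the arithmetic. As there, I would take the lex-largest monomial in $\overline{M}_q$, namely $m = x_0^{d-1}\cdots x_{q-1}^{d-1}x_q^q$. A direct computation gives $\deg(m) = \tfrac{q(d-1)+q}{d} = q$, and since any degree-$d$ divisor of $m$ in $\overline{M}$ must involve some $x_i$ with $i<q$ (each appearing to the power $d-1$ in $m$, while $x_q$ appears only to the power $q<d$ and $y$ not at all), every divisor is also an annihilator. Hence $D(m)\subseteq A(m)$ and $m$ contributes a nonzero block to row $q$. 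Because $d$ is even, Lemma~\ref{lem:dEvenGens} guarantees $S$, and therefore $\overline{S}$, is standard graded with exactly $N-n$ variables, all in degree $1$; thus $B_q(d) = |A(m)| = N - n - |NA(m)|$, and it remains to compute $|NA(m)|$.

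Next I would identify $NA(m)$ with the degree-$d$ part of an Artinian complete intersection. A generator of $\overline{S}_1$ fails to annihilate $m$ exactly when it is supported on $\{x_q,\dots,x_{n-1},y\}$ with $x_q$-exponent at most $d-1-q$, the remaining $x_i$-exponents at most $d-1$, and $y$-exponent at most $\tfrac{d}{2}-1$. Hence $|NA(m)| = \operatorname{Hilb}(d,\,R_{n-q,1}/I)$ where $R_{n-q,1}=k[x_q,\dots,x_{n-1},y]$ and $I=\langle x_q^{\,d-q},\,x_{q+1}^{\,d},\dots,x_{n-1}^{\,d},\,y^{d/2}\rangle$. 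Since $I$ is generated by one pure power in each variable, its resolution is the Koszul complex, so I would compute the degree-$d$ Hilbert function by the usual alternating sum and keep only $F_0$ and $F_1$: the second-syzygy twists start in degree $(d-q)+d = 2d-q > d$ for $d\gg 0$, and thus vanish in degree $d$.

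Here is the one substantive difference from the odd case, and essentially the only thing to get right. In Lemma~\ref{lem:genBqd} the pure $y$-power was $y^{d}$, of degree $2d$, contributing $0$ to $\operatorname{Hilb}(d,-)$; now $y^{d/2}$ has degree $2\cdot\tfrac{d}{2}=d$, so it contributes $\operatorname{Hilb}(0,R_{n-q,1})=1$ on equal footing with the $n-q-1$ pure $x$-powers of degree $d$. This replaces the odd case's $n-q-1$ by $n-q$ degree-$d$ generators, while the lone low-degree generator $x_q^{\,d-q}$ still contributes $\operatorname{Hilb}(d,R_{n-q,1}(-(d-q)))=\operatorname{Hilb}(q,R_{n-q,1})$. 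Collecting terms gives
\[
|NA(m)| = \operatorname{Hilb}(d,R_{n-q,1}) - \operatorname{Hilb}(q,R_{n-q,1}) - (n-q),
\]
whence $B_q(d) = N-n-\operatorname{Hilb}(d,R_{n-q,1})+\operatorname{Hilb}(q,R_{n-q,1})+(n-q)$, which is exactly the ``$+(n-q)$'' of the statement rather than the odd case's ``$+(n-q-1)$''. The closed form then follows by substituting Lemma~\ref{Lem:GenHilb} with $i=n-q$, using $\lfloor d/2\rfloor = d/2$. The only points needing care are confirming that none of the pure-power generators $z_0,\dots,z_n$ is inadvertently recounted among the degree-$d$ monomials of $R_{n-q,1}/I$ (the exponent constraints exclude each of them), and that the Koszul higher syzygies genuinely miss degree $d$; both are routine for $d\gg 0$, so I expect no real obstacle beyond this degree bookkeeping.
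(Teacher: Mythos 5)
Your proposal is correct and follows essentially the same route as the paper: the same monomial $m = x_0^{d-1}\cdots x_{q-1}^{d-1}x_q^q$, the same identification of $NA(m)$ with the degree-$d$ part of $R_{n-q,1}/\langle x_q^{d-q}, x_{q+1}^d,\dots,x_{n-1}^d, y^{d/2}\rangle$, and the same alternating-sum Hilbert function computation. You correctly isolate the one arithmetic change from the odd case (the generator $y^{d/2}$ now sits in degree $d$ and contributes $1$, turning $n-q-1$ into $n-q$), which is precisely the point the paper's terser proof relies on.
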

\begin{proof}
    We consider the monomial $m = x_0^{d-1}\cdots x_{q-1}^{d-1}x_q^q$, and again aim to count the number of non-annihilators. $NA(m)$ is in bijection with the degree $d$ monomials of $R_{n-q, 1} / I$, where $R_{n-q, 1} = k[x_q, \ldots ,x_{n-1}, y]$ and $I = \langle x_q^{d-q}, x_{q+1}^d, \ldots , x_{n-1}^d, y^{\frac{d}{2}}\rangle$. Taking the minimal free resolution then the alternating sum of Hilbert functions yields 
    \[|NA(m)| = \operatorname{Hilb}(d, R_{n-q,1}) - \operatorname{Hilb}(q, R_{n-q, 1}) - (n-q).\]
    Therefore, 
    \[B_q(d) = N-n - \operatorname{Hilb}(d, R_{n-q,1}) + \operatorname{Hilb}(q, R_{n-q, 1}) + (n-q).\]
    The given closed form arises from applying Lemma \ref{Lem:GenHilb} to each of the above Hilbert functions. 
\end{proof}

\begin{lemma}
\label{lem:EvenBqd2}
    For $d$ even, $B_n(d) = N-n$.
\end{lemma}
\begin{proof}
    Our choice of monomial $m$ depends on the parity of $n$, so we consider the case where $n$ is even and odd separately. 

    If $n$ is even, $m = x_0^{d-1} \cdots x_{n-1}^{d-1}y^{\frac{n}{2}}$. Every $z_i \in \overline{S}$ corresponds to a monomial containing at least one of the $x_i$, so they all correspond to annihilators of $m$ in $\overline{M}$, and $|NA(m)| = 0$. 
    
    If $n$ is odd, we let $m = x_0^{d-1}\cdots x_{n-1}^{d-2} y^{\frac{n+1}{2}}$. Elements of $NA(m)$ must correspond to monomials of the form $x_{n-1}^ay^b$ with $a = 1$ and $b \leq \frac{d}{2}-\frac{n+1}{2} -1$. The maximum degree of such an element is $1 + d-n+1 - 2 < d$, so this element cannot correspond to a generator of $\overline{S}$. Thus, $|NA(m)| = 0$ in this case as well. 

    Moreover, $B_n(d) = N-n$ regardless of the parity of $n$.  
\end{proof}

The following lemma is the analogue of of Lemma \ref{lem:oddOverlap} for $d$ even. 
\begin{lemma}
\label{lem:evenOverlap}
    For $d$ even and $1 \leq q \leq n$, $\beta_{i, i+q} \neq 0$ for all $F_q(d) \leq i \leq B_q(d)$. 
\end{lemma}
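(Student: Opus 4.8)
The plan is to mirror the row-by-row strategy of Lemma \ref{lem:oddOverlap}. For each row $q$, let $m_1$ be the front monomial used to compute $F_q(d)$ (from Lemma \ref{lem:F1d} when $q=1$, and Lemma \ref{lem:evenFqd} when $q\geq 2$) and let $m_2$ be the back monomial used to compute $B_q(d)$ (from Lemmas \ref{lem:evenBqd1} and \ref{lem:EvenBqd2}). By the EEL Method (Lemma \ref{Lem:EEL}), $m_1$ yields nonzero Betti entries in columns $|D(m_1)|=F_q(d)$ through $|A(m_1)|$, while $m_2$ yields nonzero entries in columns $|D(m_2)|$ through $|A(m_2)|=B_q(d)$. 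Since $F_q(d)\leq |A(m_1)|$ and $|D(m_2)|\leq B_q(d)$ hold automatically, the union of the two blocks $[F_q(d),|A(m_1)|]$ and $[|D(m_2)|,B_q(d)]$ covers the full range $[F_q(d),B_q(d)]$ as soon as $|D(m_2)|\leq |A(m_1)|$. So the entire lemma reduces to establishing this single inequality in each of the cases $q=1$, $2\leq q\leq n-1$, and $q=n$.

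For $q=1$ one checks that $m_1 = x_0^{d-1}x_1 = m_2$ (the back monomial of Lemma \ref{lem:evenBqd1} specializes to $x_0^{d-1}x_1$ at $q=1$), so a single block already spans the row and there is nothing to verify. For $2\leq q\leq n-1$ I would estimate both sides asymptotically in $d$, exactly as in the odd case. Counting divisors of $m_2=x_0^{d-1}\cdots x_{q-1}^{d-1}x_q^q$ realizes $|D(m_2)|$ as the degree-$d$ value of the Hilbert function of a standard-graded Artinian quotient of $k[x_0,\dots,x_q]$, giving $|D(m_2)| = O(d^q)$. Counting non-annihilators of $m_1 = x_0^{d-1}\cdots x_{q-2}^{d-1}x_{q-1}^{q+1}y^{\frac d2-1}$ realizes $|NA(m_1)|$ as a degree-$d$ Hilbert function of a standard-graded quotient of $k[x_{q-1},\dots,x_{n-1}]$: the crucial observations are that $m_1$ carries $y$ to the power $\frac d2-1$, so any generator of $\overline{S}_1$ containing $y$ annihilates $m_1$ (since the reduction kills $y^{d/2}$), and that $m_1$ carries $x_0,\dots,x_{q-2}$ to the power $d-1$, so any generator involving those variables also annihilates. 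This yields $|NA(m_1)| = O(d^{n-q})$, hence $|A(m_1)| = N-n-|NA(m_1)| \sim d^n$ by Remark \ref{remark:asymptoticsN}. Since $q\leq n-1$, we obtain $|D(m_2)| = O(d^q) = o(d^n) \leq |A(m_1)|$ for $d\gg 0$.

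For $q=n$ I expect the argument to collapse to a single-monomial statement, so no genuine overlap check is needed. The front monomial is $m_1 = x_0^{d-1}\cdots x_{n-2}^{d-1}x_{n-1}^{n+1}y^{\frac d2-1}$, which carries each of $x_0,\dots,x_{n-2}$ to the power $d-1$ and $y$ to the power $\frac d2-1$. A generator of $\overline{S}_1$ is a non-pure-power degree-$d$ monomial; to avoid annihilating $m_1$ it must avoid $x_0,\dots,x_{n-2}$ and $y$ altogether and keep $x_{n-1}$ strictly below degree $d-n-1$, which forces it to be a power of $x_{n-1}$ of degree $<d$ — impossible for a degree-$d$ monomial. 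Hence $|NA(m_1)| = 0$, so $|A(m_1)| = N-n = B_n(d)$ by Lemma \ref{lem:EvenBqd2}, and the block produced by $m_1$ alone already spans $[F_n(d),B_n(d)]$.

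The main obstacle is the asymptotic comparison in the range $2\leq q\leq n-1$: one must confirm that the leading-order cancellation appearing in $|D(m_2)|$ (the two binomial terms in the formula from Lemma \ref{lem:evenFqd}/\ref{lem:evenBqd1} share the same top-degree coefficient) does not spoil the estimate. Because only the crude bound $|D(m_2)| = O(d^q)$ is required and $q<n$, this comparison is routine; the extreme rows $q=1$ and $q=n$ require no overlap argument at all, which makes the even case somewhat simpler than its odd counterpart.
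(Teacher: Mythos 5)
Your proposal is correct and follows essentially the same route as the paper: the identity $m_1=m_2$ in row $1$, the asymptotic comparison $|D(m_2)|=O(d^q)$ versus $|A(m_1)|\sim d^n$ for $2\leq q\leq n-1$, and the observation that $|NA(m_1)|=0$ in row $n$ so a single monomial spans the whole row. The only (cosmetic) difference is that you carry the exponent $x_{q-1}^{q+1}$ from Lemma \ref{lem:evenFqd} consistently, whereas the paper's proof of this lemma writes $x_{q-1}^{q-1}$; your version matches the monomial actually used to define $F_q(d)$.
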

\begin{proof}
    For a row $q$, let $m_1$ denote the monomial used to compute $F_q(d)$ and $m_2$ denote the monomial used for $B_q(d)$. There are three possible $m_1$, $m_2$ pairs, so we consider the cases where $q = 1$, $2 \leq q \leq n-1$, and $q = n$ separately. 

    \textbf{Row 1:} As in Lemma \ref{lem:oddOverlap}, for the first row, $m_1 = m_2$, so there is no overlap to check. 

    \textbf{Rows} $\bm{2\leq q \leq n-1 \colon}$ For these rows, $m_1 = x_0^{d-1}\cdots x_{q-2}^{d-1}x_{q-1}^{q-1}y^{\frac{d}{2}-1}$ and $m_2 = x_0^{d-1}\cdots x_{q-1}^{d-1}x_q^{q}$. First, note that the degree $d$ monomials in $R_{q+1,0}/I := k[x_0, \ldots, x_q] /\langle x_0^d, \ldots, x_{q-1}^d, x_q^{q+1}\rangle$ are in bijection with $D(m_2)$. In Lemma \ref{lem:oddOverlap}, we computed $\operatorname{Hilb}(d, R_{q+1,0}/I)$ for precisely this ring, so we won't repeat it here. We have 
    \[|D(m_2)| = {d+q \choose q} - { d-1 \choose q} - q =  b''d^q + (\text{lower order terms}),\]
    where $b''$ is some constant.

    Next, observe that $|NA(m_2)| = \operatorname{Hilb}(d, R_{n-q+2, 0}/I')$ for $R_{n-q+2, 0} =k[x_{q-1}, \ldots, x_{n-1}]$ and $I' = \langle x_{q-1}^{d-q-1}, x_q^d, \ldots, x_{n-1}^d \rangle$. After applying an alternating sum of Hilbert functions to the minimal free resolution, we have 
    \[|NA(m_1)| = \operatorname{Hilb}(d, R_{n-q+2, 0}) = \operatorname{Hilb}(q+1, R_{n-q+2, 0}) - (n-q).\] 
    The ring $R_{n-q+2, 0}$ is standard graded, so 
    \[|A(m_1)| = N - n - |NA(m)| =  N - n - {d+n-q+1 \choose n-q+1} + {n+2 \choose n-q+1} + {n-q}.\]

    Therefore, for $d \gg 0$, 
    \[|A(m_1)| = ad^{n} + (\text{lower order terms})- \left(a'd^{n-q+1} +(\text{lower order terms})\right),\]
    for some constants $a$ and $a'$.
    Since $q >1$, $|A(m_1)|$ is asymptotically $d^n$. Lastly, since $q \leq n-1$, 
    \[ |D(m_2)| = b''d^q+ (\text{lower order terms}) \leq ad^{n} + (\text{lower order terms}) = |A(m_1)|.\]

    \textbf{Row} $\bm{n\colon}$ For the last row, $m_1 = x_0^{d-1} \cdots x_{n-2}^{d-1}x_{n-1}^{n+1} y^{\frac{d}{2}-1}$. Observe that $m_1$ is annihilated by all generators of $\overline{S}$, so that $|A(m_1)| = N-n \geq |D(m_2)|$ by construction.
\end{proof}

\subsection{Main Theorems}\hfill
\label{subsec:MainThrm}

Theorem \ref{Thrm:A} and Theorem \ref{thrm:preciseBoundsIntro} summarize the previous four sections. We provide proofs of the main theorems below.

\begin{Bproof}
        This theorem is an amalgamation of Lemmas \ref{lem:F1d}, \ref{lem:Fqd}, \ref{lem:genBqd}, \ref{lem:secondLastBqd}, \ref{lem:lastBqd}, \ref{lem:oddOverlap}, \ref{lem:evenFqd}, \ref{lem:evenBqd1}, \ref{lem:EvenBqd2}, and \ref{lem:evenOverlap}. 
    \end{Bproof}

\begin{Aproof}
    Following from Theorem \ref{thrm:preciseBoundsIntro} and Remark \ref{rmk:asypN}, it suffices to prove the following facts:
\begin{enumerate}
 \item \label{Aproof:1} When d is odd and $2\leq q \leq n+1$ then $F_q(d)$ agrees with a period 2 polynomial of degree $\leq {q-2}$ for $d\gg 0$. 
 \item When d is odd and $1\leq q \leq n-1$ then $N - B_q(d)$ agrees with a period 2 polynomial of degree $\leq {n-q}$ for $d\gg 0$.
\item  When d is even and $2\leq q \leq n$ then $F_q(d)$ agrees with a period 2 polynomial of degree $\leq {q-1}$ for $d\gg 0$.
\item  When d is even and $1\leq q \leq n-1$ then $N - B_q(d)$ agrees with a period 2 polynomial of degree $\leq {n-q}$ for $d\gg 0$.
\end{enumerate}
For (\ref{Aproof:1}): Rearranging the formula for $F_q(d)$ given in Lemma \ref{lem:Fqd} yields that
\[F_q(d) +(q-2) = \operatorname{Hilb}(d,R_{q-1,1}) - \operatorname{Hilb}(d-q-1,R_{q-1,1}) = \operatorname{Hilb}(d, R_{q-1,1}/f),\]
where $f$ is some degree $q+1$ homogeneous element in $R_{q-1,1}$. Since $q-2$ is a constant in $d$, the statement follows from Part (\ref{cor:asympN:hypersurface}) of Lemma \ref{lem:asympN}, with $i = q-1$.

For (2):  Lemma \ref{lem:genBqd} shows that $N - B_q(d) +\operatorname{Hilb}(q,R_{n-q,1})-q-1 =\operatorname{Hilb}(d,R_{n-q,1})$.  Since $\operatorname{Hilb}(q,R_{n-q,1})$ and $-q-1$ are constants in $d$, the statement follows from Part (\ref{cor:asympN:poly}) of Lemma \ref{lem:asympN}.

For (3):  Lemma \ref{lem:evenFqd} shows that $F_q(d) + q = \operatorname{Hilb}(d, R_{n-q,1}/g)$ for $g$ some degree $q+2$ homogeneous element in $R_{n-q,1}$. The statement then follows from Part (\ref{cor:asympN:hypersurface}) of Lemma \ref{lem:asympN}.

For (4): Lemma \ref{lem:evenBqd1} shows that $N - B_q(d) + \operatorname{Hilb}(q, R_{n-q,1}) - q = \operatorname{Hilb}(d, R_{n-q,1})$, so $N-B_q(d)$ is, up to a constant, equal to $\operatorname{Hilb}(d,R_{n-q,1})$, and the statement follows from Part (\ref{cor:asympN:poly}) of Lemma \ref{lem:asympN}.
\end{Aproof}

\begin{corollary}
    We continue with the hypotheses of Theorem \ref{Thrm:A}, and let $M$ be the coordinate ring of the Veronese. For $\rho_q(M)$ defined as in \cite{ErmanYang_2018} and the introduction, we have
    \[\rho_q(M) = \begin{cases}
        1 & \text{if } 1 \leq q \leq n\\
        1 & \text{if } q=n+1 \text{ and } d \text{ is odd} \\
        0 & \text{else }
    \end{cases}.\]
\end{corollary}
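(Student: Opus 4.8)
The plan is to extract the two orders of growth that appear in Theorem~\ref{thrm:asympt} and play them against the size of the resolution. Write $\nu_q(d)$ for the number of nonzero entries in the $q$th row of $\beta^S(M)$, so that by definition $\rho_q(M) = \nu_q(d)/(\operatorname{pdim}(M)+1)$; here the statement is understood asymptotically, as $d\to\infty$. Since the last column of the resolution is $N-n$ (Lemmas~\ref{lem:lastBqd} and~\ref{lem:EvenBqd2}), we have $\operatorname{pdim}(M)+1 = N-n+1$, which by Remark~\ref{remark:asymptoticsN} grows like $d^n$. Our lemmas only \emph{guarantee} $\beta_{i,i+q}\neq 0$ on the block $F_q(d)\le i\le B_q(d)$, but any further nonzero entries can only help; thus
\[
  B_q(d)-F_q(d)+1 \;\le\; \nu_q(d) \;\le\; \operatorname{pdim}(M)+1,
\]
so that $\rho_q(M)\le 1$ holds for free, and it remains only to bound $\rho_q$ from below in the claimed ranges.

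For $1\le q\le n$, and additionally for $q=n+1$ when $d$ is odd, I would invoke the degree bookkeeping established in the proof of Theorem~\ref{thrm:asympt}: $F_q(d)$ agrees with a period-$2$ polynomial of degree at most $q-1\le n-1$, hence $F_q(d)=o(d^n)$, while $N-B_q(d)$ agrees with a period-$2$ polynomial of degree $\max(0,n-q)\le n-1$ (with the boundary rows $q\in\{n,n+1\}$ simply giving a constant, as recorded in the table of Theorem~\ref{thrm:asympt}), hence $N-B_q(d)=o(d^n)$. Writing the guaranteed block as
\[
  B_q(d)-F_q(d)+1 \;=\; (N+1) - \bigl(N-B_q(d)\bigr) - F_q(d),
\]
and dividing by $\operatorname{pdim}(M)+1=N-n+1\sim d^n$, both correction terms are negligible relative to the denominator, so the ratio tends to $1$. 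Combined with the upper bound $\rho_q(M)\le 1$, this forces $\rho_q(M)\to 1$ in each of these ranges.

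It remains to treat the cases giving $0$. When $d$ is even and $q\ge n+1$, or when $q\ge n+2$ for either parity, Lemma~\ref{lem:dEvenGens} and Corollary~\ref{cor:lastRow} show the Betti table is identically zero in row $q$, so $\nu_q(d)=0$ and $\rho_q(M)=0$ exactly. The only other ``else'' case is $q=0$, where the sole nonzero entry is $\beta_{0,0}=1$, giving $\rho_0(M)=1/(N-n+1)\to 0$. I expect no serious obstacle here: the content is entirely in Theorem~\ref{thrm:asympt}. The one point worth flagging is that the argument is deliberately one-sided---because our methods do not rule out nonzero entries outside $[F_q(d),B_q(d)]$ (ruling these out is exactly Conjecture~\ref{conjecture}), the limit is pinned from above by the trivial bound $\rho_q\le 1$ and from below by the guaranteed block, and these two bounds squeeze to the same value.
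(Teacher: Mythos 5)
Your proposal is correct and follows essentially the same route as the paper: both arguments combine the growth orders from Theorem~\ref{thrm:asympt} (namely $F_q(d)=o(d^n)$ and $N-B_q(d)=o(d^n)$ against $\operatorname{pdim}(M)+1\sim d^n$) with the regularity results of Section~\ref{section:regularity} for the vanishing rows. Your write-up is somewhat more explicit than the paper's --- in particular in flagging the one-sidedness of the bound and in treating $q=0$ --- but the underlying argument is identical.
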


\begin{proof}
    Recall that, for $d \gg 0$, $N$ is on the same order of magnitude as $d^n$ (Remark \ref{rmk:asypN}). Therefore, for every row in Theorem \ref{Thrm:A}, the column index of the left-most guaranteed nonzero entry is at least one order of magnitude less than the right-most entry. This allows us to conclude that, asymptotically, $\rho_q(M) = 1$ for $q$ in the allowable range. The regularity arguments in Section \ref{section:regularity} show that the Betti table is zero for any row outside of this range, meaning that $\rho_q(M) = 0$ otherwise.  
\end{proof}

\section{Challenges for Other Weighted Projective Spaces}
\label{section:futureDir}

To conclude, we will highlight some additional challenges that arise when considering other weighted projective spaces. 

In Section \ref{section:regularity}, we showed that computing the number of rows in the Betti table gets increasingly more complicated as the degrees of the variables become larger (and that each setting splinters into cases, depending on the relationship between $d$ and each of the variables). Example \ref{ex:P(1,1,3)Reg} highlights this complication by exploring Veronese embeddings of $\mathbb{P}(1,1,3)$.

For this section, we will focus on a different challenge that arises when studying the asymptotic syzygies of other weighted projective spaces. One key feature of $\mathbb{P}(1^n,2)$ is that we can perform an Artinian reduction that yields a finite module generated by elements of degree 1. However, it's possible to be left with a finite module that has $k$-algebra generators with degree $\neq 1$ after the Artinian reduction. These generators of higher degree make both selecting the correct monomials and determining their syzygies much more nuanced. We explore this through the following example. 

\begin{example}
\label{ex:futureDir}
     Let $R = k[x_0, x_1, y]$ so that $\Proj(R) = \mathbb{P}(1,1,3)$, and let $S = k[z_0, \ldots,z_{11}]$, so that the $z_i$'s correspond to the generators of $M = R^{(5)}$, in the order listed below: 
    \[M = k[x_0^5, \; x_1^5, \; \bm{y^5}, \; x_0^4x_1, \; x_0^3x_1^2, \; x_0^2x_1^3, \; x_0x_1^4, \; x_0^2y, \; x_0x_1y, \; x_1^2y, \; \bm{x_0y^3}, \; \bm{x_1y^3}].\]

    Notice that the monomials in bold have degree greater than to 1 (in the grading inherited from $S$). In particular, $\deg(y^5) = 3$ and $\deg(x_0y^3) = \deg(x_1y^3) = 2$. We perform an Artinian reduction to get $\overline{M} = M/ \langle x_0^5, x_1^5, y^5 \rangle$, an $\overline{S} = S/\langle z_0, z_1, z_2 \rangle$-module. The key difference here is that $\overline{S}$ is no longer generated in degree 1, as it has two variables of degree 2 ($z_{10}$ and $z_{11}$). Crucially, since $y^5$, $x_0y^3$, and $x_1y^3$ do not form a regular sequence, no choice of Artinian reduction will allow us to simultaneously remove all three, and we will always be left with some $z_i$ that are not in $\overline{S}_1 $. 
    
    For $\mathbb{P}(1^n,2)$ it was sufficient to find homology elements in the strand 
    \[\wedge^{i-1}\overline{S}_1 \otimes \overline{M}_{j+1} \longleftarrow \wedge^i \overline{S}_1 \otimes \overline{M}j \longleftarrow \wedge^{i+1} \overline{S}_1 \otimes \overline{M}_{j-1};\]  
    however, when $\overline{S}$ is not generated in degree 1, we need to allow the $z_i \not\in \overline{S}_1$ to appear in the wedge product as well. If we let $\overline{m}$ be the maximal ideal generated by the variables in $\overline{S}$, then we actually wish to find nonzero homology elements in the strand 
    \[\wedge^{i-1}\overline{m}/\overline{m}^2 \otimes \overline{M}_{j+1} \longleftarrow \wedge^i \overline{m}/\overline{m}^2 \otimes \overline{M}j \longleftarrow \wedge^{i+1} \overline{m}/\overline{m}^2 \otimes \overline{M}_{j-1}.\]  
    
    This example is again small enough that we can compute the Betti table in Macaulay2, shown below. 

        \begin{center}
        \begin{tabular}{ c | c c c c c c c c c c c  } 
         & 0 & 1 & 2 & 3 & 4 & 5 & 6 & 7 & 8 & 9   \\
        \hline
        0 & 1 & - & - & - & - & - & -& - & -& -  \\ 
        1 & - & 21 & 70 & 105 & 84 & 35 & 6 & - & - & -  \\ 
        2 & - & 14 & 84 & 210 & 280 & 210 & 84 & 14 & - & - \\ 
        3 & - & 9 & 63 & 189 & 315 & 315 & 189 & 63 & 9 & - \\
        4 & - & - & 14 & 84 & 210 & 280 & 210 & 84 & 14  & - \\
        5 & - & - & - & 6 & 35 & 84 & 105 & 70 & 21 & - \\
        6 & - & - & - & - & - & -& -& -& -& 1 
        \end{tabular}
    \end{center}

    We see that this table extends through row 6, as anticipated by Example \ref{ex:P(1,1,3)Reg}. However, the the highest degree monomial in $\overline{M}$ is $m = x_1^4x_2^4y^4$, which has degree $4$. In the previous arguments, $\deg(m)$ gave the index of the row where that monomial yields nonzero entries; however, this is only true in examples where $\overline{S}$ is generated in degree 1. In this setting, $m$ will yield the nonzero Betti entry at $\beta_{9,15}$, but we have to track the syzygies very carefully to see this. 

    Note that $m$ is annihilated by every generator of $\overline{S}$, so we expect it to give a nonzero entry in column 9 via the term 
    \[z_3 \wedge \ldots \wedge z_{10} \wedge z_{11} \otimes m.\]
    However, since $z_{10}$ and $z_{11}$ each have degree 2, each one bumps the nonzero Betti entry down a row from its expected location. Thus, $m$ will actually correspond to a nonzero Betti entry in column 9 of row $\deg(m) + 2 = 6$ (namely, $\beta_{9,15}$). 

    An interesting phenomenon can be seen here: when an element is divisible by or annihilated by a generator with degree $\neq 1$, that generator pushes the nonzero block down a row in the Betti table. The upshot is that individual monomials now yield nonzero blocks in the shape of parallelograms, spanning multiple rows in the Betti table! 

    To further emphasize this point, consider the monomial $m = x_0^4y^2$. We can define $D(m)$ and $A(m)$ as in Section \ref{section:notation}, so that 
    \[ D(m) = \{z_7\} \text{ and } A(m) = \{z_3, z_4, z_5, z_6, z_7, z_8, z_{10}, z_{11} \},\] as the only non-annihilator is $z_9$ which corresponds to $x_1^2y$. 

    Since $\deg(z_7) = 1$, the element $z_7 \otimes m$ will yield a nonzero Betti entry in row $\deg(m) = 2$ of the Betti table. We can add the other degree 1 elements of $A(m)$ to the wedge product to get a nonzero block corresponding to $m$ that stays in row 2. The last such element is 
    \[ \left( z_3 \wedge z_4 \wedge z_5 \wedge z_6 \wedge z_7 \wedge z_8\right)  \otimes m \implies \beta_{6,8} \neq 0,\]
    so that $m$ gives nonzero entries in row 2 between columns 1 and 6. 
    
    However, if we instead consider $\left(z_7 \wedge z_{10} \right) \otimes m$, we get a nonzero entry in row 3 of the Betti table (since $\deg(z_{10}) = 2$). We can similarly add the degree 1 elements of $A(m)$ to this wedge product to get a nonzero block of entries in the third row. The last such element is 
    \[\left(z_3 \wedge \ldots \wedge z_8 \wedge z_{10} \right) \otimes m \implies \beta_{7, 10} \neq 0, \]
    so $m$ also gives a block of nonzero entries in row 3 between columns 2 and 7. 

    Lastly, we could consider $\left( z_7 \wedge z_{10} \wedge z_{11} \right) \otimes m$, which yields $\beta_{3,7} \neq 0$. We can add the remaining elements of $A(m)$ to the wedge product to get a block of nonzero entries in row 4 between columns 3 and 8. 

    Putting this all together, we see that the single monomial $m$ accounts for all of the Betti entries highlighted below. 

    \begin{center}
       \begin{tabular}{ c | c c c c c c c c c c c  } 
         & 0 & 1 & 2 & 3 & 4 & 5 & 6 & 7 & 8 & 9   \\
        \hline
        0 & 1 & - & - & - & - & - & -& - & -& -  \\ 
       1 & - & 21 & 70 & 105 & 84 & 35 & 6 & - & - & -  \\ 
        2 & - & \hl{14} & \hl{84} & \hl{210} & \hl{280} & \hl{210} & \hl{84} & 14 & - & - \\ 
        3 & - & 9 & \hl{63} & \hl{189} & \hl{315} & \hl{315} & \hl{189} & \hl{63} & 9 & - \\
        4 & - & - & 14 & \hl{84} & \hl{210} & \hl{280} & \hl{210} & \hl{84} & \hl{14}& - \\
       5 & - & - & - & 6 & 35 & 84 & 105 & 70 & 21 & - \\
        6 & - & - & - & - & - & -& -& -& -& 1 
        \end{tabular}
   \end{center}
   
    This entire Betti table is recoverable by applying the EEL Method to well-chosen monomials. However, unlike the $\mathbb{P}(1^n,2)$ setting, we may need to consider more than two monomials per row. For example, 
    row $3$ of the Betti table sees pieces of the nonzero blocks corresponding to three monomials, where each is the lex-most monomial in some degree with respect to an ordering on the variables. In particular, $x_0^4y^2$ gives the highlighted entries above, $x_0y^3$ yields that $\beta_{1,4} \neq 0$, and $x_0^4x_1^3y$ gives $\beta_{8,11} \neq 0$. This is possible now that a monomial need not have degree $3$ in order to yield nonzero Betti entries in row $3$! 
\end{example}

Beyond $\mathbb{P}(1^n,3)$, these complications get exponentially worse. For example, both $\mathbb{P}(1^n,2)$ and $\mathbb{P}(1^n,3)$ admit only two distinct term orders on the corresponding variables (up to symmetry). However, for an arbitrary weighted projective space, there may be numerous distinct orders where each yields a distinct monomial to consider in every degree.

\bibliographystyle{alpha}
\bibliography{bibliography}

@article{BruceCrantonHellerSayrafi2021,
  title={Characterizing multigraded regularity on products of projective spaces},
  author={Bruce, Juliette and Heller, Lauren Cranton and Sayrafi, Mahrud},
  journal={arXiv preprint arXiv:2110.10705},
  year={2021}
}

@article {bePositivity2024,
    AUTHOR = {Brown, Michael K. and Erman, Daniel},
     TITLE = {Positivity and nonstandard graded {B}etti numbers},
   JOURNAL = {Bull. Lond. Math. Soc.},
  FJOURNAL = {Bulletin of the London Mathematical Society},
    VOLUME = {56},
      YEAR = {2024},
    NUMBER = {1},
     PAGES = {111--123},
      ISSN = {0024-6093,1469-2120},
   MRCLASS = {13D02 (13D45 14M25)},
  MRNUMBER = {4700582},
       DOI = {10.1112/blms.12917},
       URL = {https://doi.org/10.1112/blms.12917},
}

@article {Benson2004,
    AUTHOR = {Benson, Dave},
     TITLE = {Dickson invariants, regularity and computation in group
              cohomology},
   JOURNAL = {Illinois J. Math.},
  FJOURNAL = {Illinois Journal of Mathematics},
    VOLUME = {48},
      YEAR = {2004},
    NUMBER = {1},
     PAGES = {171--197},
      ISSN = {0019-2082,1945-6581},
   MRCLASS = {20J06 (13A50 13D45)},
  MRNUMBER = {2048221},
MRREVIEWER = {Ioannis\ Emmanouil},
       URL = {http://projecteuclid.org/euclid.ijm/1258136180},
}

@article {MaclaganSmith2004,
    AUTHOR = {Maclagan, Diane and Smith, Gregory G.},
     TITLE = {Multigraded {C}astelnuovo-{M}umford regularity},
   JOURNAL = {J. Reine Angew. Math.},
  FJOURNAL = {Journal f\"ur die Reine und Angewandte Mathematik. [Crelle's
              Journal]},
    VOLUME = {571},
      YEAR = {2004},
     PAGES = {179--212},
      ISSN = {0075-4102,1435-5345},
   MRCLASS = {13D45 (14M25)},
  MRNUMBER = {2070149},
MRREVIEWER = {Philippe\ Gimenez},
       DOI = {10.1515/crll.2004.040},
       URL = {https://doi.org/10.1515/crll.2004.040},
}

@article{BotbolChardin2017,
  title={Castelnuovo {M}umford regularity with respect to multigraded ideals},
  author={Botbol, Nicol{\'a}s and Chardin, Marc},
  journal={Journal of Algebra},
  volume={474},
  pages={361--392},
  year={2017},
  publisher={Elsevier}
}

@article{BruceCrantonHellerSayrafi2022,
  title={Bounds on Multigraded Regularity},
  author={Bruce, Juliette and Heller, Lauren Cranton and Sayrafi, Mahrud},
  journal={arXiv preprint arXiv:2208.11115},
  year={2022}
}

@article{ChardinHolanda2022,
  title={Multigraded {T}or and local cohomology},
  author={Chardin, Marc and Holanda, Rafael},
  journal={arXiv preprint arXiv:2211.14357},
  year={2022}
}

@article{ChardinNemati2020,
  title={Multigraded regularity of complete intersections},
  author={Chardin, Marc and Nemati, Navid},
  journal={arXiv preprint arXiv:2012.14899},
  year={2020}
}

@article{SidmanVanTuylWang2006,
  title={Multigraded regularity: coarsenings and resolutions},
  author={Sidman, Jessica and Van Tuyl, Adam and Wang, Haohao},
  journal={Journal of Algebra},
  volume={301},
  number={2},
  pages={703--727},
  year={2006},
  publisher={Elsevier}
}

@article{BrownErman2024TateRes,
  title={Tate resolutions on toric varieties},
  author={Brown, Michael K and Erman, Daniel},
  journal={Journal of the European Mathematical Society},
  year={2024}
}

@article{BerkeschKleinLoperYang2021,
  title={Homological and combinatorial aspects of virtually {C}ohen--{M}acaulay sheaves},
  author={Berkesch, Christine and Klein, Patricia and Loper, Michael C and Yang, Jay},
  journal={Transactions of the London Mathematical Society},
  volume={8},
  number={1},
  pages={413--434},
  year={2021},
  publisher={Wiley Online Library}
}

@article{BrownSayrafi2024,
  title={A short resolution of the diagonal for smooth projective toric varieties of {P}icard rank 2},
  author={Brown, Michael K and Sayrafi, Mahrud},
  journal={Algebra \& Number Theory},
  volume={18},
  number={10},
  pages={1923--1943},
  year={2024},
  publisher={Mathematical Sciences Publishers}
}

@article{BuseChardinNemati2022,
  title={Multigraded {S}ylvester forms, duality and elimination matrices},
  author={Bus{\'e}, Laurent and Chardin, Marc and Nemati, Navid},
  journal={Journal of Algebra},
  volume={609},
  pages={514--546},
  year={2022},
  publisher={Elsevier}
}

@article{Cobb2024,
  title={Syzygies of curves in products of projective spaces},
  author={Cobb, John},
  journal={Mathematische Zeitschrift},
  volume={306},
  number={2},
  pages={27},
  year={2024},
  publisher={Springer}
}

@article{EisenbudErmanSchreyer2015,
  title={Tate resolutions for products of projective spaces},
  author={Eisenbud, David and Erman, Daniel and Schreyer, Frank-Olaf},
  journal={Acta Mathematica Vietnamica},
  volume={40},
  pages={5--36},
  year={2015},
  publisher={Springer}
}

@article{HaradaNowrooziVanTuyl2022,
  title={Virtual resolutions of points in {P}1$\times$ {P}1},
  author={Harada, Megumi and Nowroozi, Maryam and Van Tuyl, Adam},
  journal={Journal of Pure and Applied Algebra},
  volume={226},
  number={12},
  pages={107140},
  year={2022},
  publisher={Elsevier}
}

@article{Yang2021,
  title={Virtual resolutions of monomial ideals on toric varieties},
  author={Yang, Jay},
  journal={Proceedings of the American Mathematical Society, Series B},
  volume={8},
  number={9},
  pages={100--111},
  year={2021}
}

@article{CrantonHeller2025,
  title={Explicit constructions of short virtual resolutions of truncations},
  author={Cranton Heller, Lauren},
  journal={arXiv preprint arXiv:2501.06960},
  year={2025}
}

@article{HanlonHicksLazarev2024,
  AUTHOR = {Hanlon, Andrew and Hicks, Jeff and Lazarev, Oleg},
     TITLE = {Resolutions of toric subvarieties by line bundles and
              applications},
   JOURNAL = {Forum Math. Pi},
  FJOURNAL = {Forum of Mathematics. Pi},
    VOLUME = {12},
      YEAR = {2024},
     PAGES = {Paper No. e24, 58},
      ISSN = {2050-5086},
   MRCLASS = {14M25 (14F08 18G10)},
  MRNUMBER = {4831691},
       DOI = {10.1017/fmp.2024.21},
       URL = {https://doi.org/10.1017/fmp.2024.21},
}

@Misc{M2,
          author = {Grayson, Daniel R. and Stillman, Michael E.},
          title = {Macaulay2, a software system for research in algebraic geometry},
          howpublished = {Available at \url{http://www2.macaulay2.com}}
        }

@article{beLinSyzOCurvesIWProjSp,
      title={Linear syzygies of curves in weighted projective space},
  author={Brown, Michael K and Erman, Daniel},
  journal={Compositio Mathematica},
volume={(to appear)},
  year={2023},

}

@article {BrownErman2024LinearStrands,
    AUTHOR = {Brown, Michael K. and Erman, Daniel},
     TITLE = {Linear strands of multigraded free resolutions},
   JOURNAL = {Math. Ann.},
  FJOURNAL = {Mathematische Annalen},
    VOLUME = {390},
      YEAR = {2024},
    NUMBER = {2},
     PAGES = {2707--2725},
      ISSN = {0025-5831,1432-1807},
   MRCLASS = {13D02 (16E05)},
  MRNUMBER = {4801838},
       DOI = {10.1007/s00208-024-02803-1},
       URL = {https://doi.org/10.1007/s00208-024-02803-1},
}

@book {bhCMRings,
    AUTHOR = {Bruns, Winfried and Herzog, J\"{u}rgen},
     TITLE = {Cohen-{M}acaulay rings},
    SERIES = {Cambridge Studies in Advanced Mathematics},
    VOLUME = {39},
 PUBLISHER = {Cambridge University Press, Cambridge},
      YEAR = {1993},
     PAGES = {xii+403},
      ISBN = {0-521-41068-1},
   MRCLASS = {13H10 (13-02)},
  MRNUMBER = {1251956},
MRREVIEWER = {Matthew Miller},
}

@article{EEL_2016,
   title={A quick proof of nonvanishing for asymptotic syzygies},
   ISSN={2214-2584},
   url={http://dx.doi.org/10.14231/AG-2016-010},
   DOI={10.14231/ag-2016-010},
   journal={Algebraic Geometry},
   publisher={Foundation Compositio Mathematica},
   author={Ein, Lawrence and Erman, Daniel and Lazarsfeld, Robert},
   year={2016},
   month=mar, pages={211–222} }

@article{BrownErmanPositivity_2023,
   title={Positivity and nonstandard graded Betti numbers},
   volume={56},
   ISSN={1469-2120},
   url={http://dx.doi.org/10.1112/blms.12917},
   DOI={10.1112/blms.12917},
   number={1},
   journal={Bulletin of the London Mathematical Society},
   publisher={Wiley},
   author={Brown, Michael K. and Erman, Daniel},
   year={2023},
   month=sep, pages={111–123} }

@article {Symonds2011,
    AUTHOR = {Symonds, Peter},
     TITLE = {On the {C}astelnuovo-{M}umford regularity of rings of
              polynomial invariants},
   JOURNAL = {Ann. of Math. (2)},
  FJOURNAL = {Annals of Mathematics. Second Series},
    VOLUME = {174},
      YEAR = {2011},
    NUMBER = {1},
     PAGES = {499--517},
      ISSN = {0003-486X,1939-8980},
   MRCLASS = {13A50 (13D40 13D45)},
  MRNUMBER = {2811606},
MRREVIEWER = {Zhongming\ Tang},
       DOI = {10.4007/annals.2011.174.1.14},
       URL = {https://doi.org/10.4007/annals.2011.174.1.14},
}

@article{EinLazarsfeld_2012,
   title={Asymptotic syzygies of algebraic varieties},
   volume={190},
   ISSN={1432-1297},
   url={http://dx.doi.org/10.1007/s00222-012-0384-5},
   DOI={10.1007/s00222-012-0384-5},
   number={3},
   journal={Inventiones mathematicae},
   publisher={Springer Science and Business Media LLC},
   author={Ein, Lawrence and Lazarsfeld, Robert},
   year={2012},
   month=mar, pages={603–646} }

@misc{bruce2020quantitativebehaviorasymptoticsyzygies,
      title={The Quantitative Behavior of Asymptotic Syzygies for Hirzebruch Surfaces}, 
      author={Juliette Bruce},
      year={2020},
      eprint={1906.07333},
      archivePrefix={arXiv},
      primaryClass={math.AG},
      url={https://arxiv.org/abs/1906.07333}, 
}

@misc{bruce2019asymptoticsyzygiessettingsemiample,
      title={Asymptotic Syzygies in the Setting of Semi-Ample Growth}, 
      author={Juliette Bruce},
      year={2019},
      eprint={1904.04944},
      archivePrefix={arXiv},
      primaryClass={math.AG},
      url={https://arxiv.org/abs/1904.04944}, 
}

@article{Green,
author = {Mark L. Green},
title = {{Koszul cohomology and the geometry of projective varieties}},
volume = {19},
journal = {Journal of Differential Geometry},
number = {1},
publisher = {Lehigh University},
pages = {125 -- 171},
year = {1984},
doi = {10.4310/jdg/1214438426},
URL = {https://doi.org/10.4310/jdg/1214438426}
}

@article{BESVirtualRes,
   title={Virtual resolutions for a product of projective spaces},
   ISSN={2214-2584},
   url={http://dx.doi.org/10.14231/AG-2020-013},
   DOI={10.14231/ag-2020-013},
   journal={Algebraic Geometry},
   publisher={Foundation Compositio Mathematica},
   author={Erman, Daniel},
   year={2020},
   month=jul, pages={460–481} }

@misc{DavisSobieska2025rationalnormalcurvesweighted,
      title={Rational Normal Curves in Weighted Projective Space}, 
      author={Caitlin M. Davis and Aleksandra Sobieska},
      year={2025},
      eprint={2410.04586},
      archivePrefix={arXiv},
      primaryClass={math.AC},
      url={https://arxiv.org/abs/2410.04586}, 
}

@misc{DavisMartinova2025koszulpropertytruncationsnonstandard,
      title={The Koszul Property for Truncations of Nonstandard Graded Polynomial Rings}, 
      author={Caitlin M. Davis and Boyana Martinova},
      year={2025},
      eprint={2503.17541},
      archivePrefix={arXiv},
      primaryClass={math.AC},
      url={https://arxiv.org/abs/2503.17541}, 
}

@misc{Kingsconjecturebirationalgeometry,
      title={King's Conjecture and Birational Geometry}, 
      author={Matthew R. Ballard and Christine Berkesch and Michael K. Brown and Lauren Cranton Heller and Daniel Erman and David Favero and Sheel Ganatra and Andrew Hanlon and Jesse Huang},
      year={2024},
      eprint={2501.00130},
      archivePrefix={arXiv},
      primaryClass={math.AG},
      url={https://arxiv.org/abs/2501.00130}, 
}

@article{BrownErmanShortRes_2024,
   title={A short proof of the Hanlon-Hicks-Lazarev Theorem},
   volume={12},
   ISSN={2050-5094},
   url={http://dx.doi.org/10.1017/fms.2024.40},
   DOI={10.1017/fms.2024.40},
   journal={Forum of Mathematics, Sigma},
   publisher={Cambridge University Press (CUP)},
   author={Brown, Michael K. and Erman, Daniel},
   year={2024} }

@article{ErmanYang_2018,
   title={Random flag complexes and asymptotic syzygies},
   volume={12},
   ISSN={1937-0652},
   url={http://dx.doi.org/10.2140/ant.2018.12.2151},
   DOI={10.2140/ant.2018.12.2151},
   number={9},
   journal={Algebra \& Number Theory},
   publisher={Mathematical Sciences Publishers},
   author={Erman, Daniel and Yang, Jay},
   year={2018},
   month=dec, pages={2151–2166} }

@misc{conca2014asymptoticsyzygiesstanleyreisnerrings,
      title={Asymptotic syzygies of Stanley-Reisner rings of iterated subdivisions}, 
      author={Aldo Conca and Martina Juhnke-Kubitzke and Volkmar Welker},
      year={2014},
      eprint={1411.3695},
      archivePrefix={arXiv},
      primaryClass={math.AC},
      url={https://arxiv.org/abs/1411.3695}, 
}

@misc{park2023asymptoticnonvanishingsyzygiesalgebraic,
      title={Asymptotic nonvanishing of syzygies of algebraic varieties}, 
      author={Jinhyung Park},
      year={2023},
      eprint={2305.15802},
      archivePrefix={arXiv},
      primaryClass={math.AG},
      url={https://arxiv.org/abs/2305.15802}, 
}

@article{Raicu_2016,
   title={Representation stability for syzygies of line bundles on Segre–Veronese varieties},
   volume={18},
   ISSN={1435-9863},
   url={http://dx.doi.org/10.4171/JEMS/611},
   DOI={10.4171/jems/611},
   number={6},
   journal={Journal of the European Mathematical Society},
   publisher={European Mathematical Society - EMS - Publishing House GmbH},
   author={Raicu, Claudiu},
   year={2016},
   month=apr, pages={1201–1231} }

@misc{zhou2014effectivenonvanishingasymptoticadjoint,
      title={Effective Non-vanishing of Asymptotic Adjoint Syzygies}, 
      author={Xin Zhou},
      year={2014},
      eprint={1204.0123},
      archivePrefix={arXiv},
      primaryClass={math.AG},
      url={https://arxiv.org/abs/1204.0123}, 
}

@misc{castryck2016computinggradedbettitables,
      title={Computing graded Betti tables of toric surfaces}, 
      author={Wouter Castryck and Filip Cools and Jeroen Demeyer and Alexander Lemmens},
      year={2016},
      eprint={1606.08181},
      archivePrefix={arXiv},
      primaryClass={math.AG},
      url={https://arxiv.org/abs/1606.08181}, 
}

@misc{bruce2021syzygiesmathbbp1timesmathbbp1data,
      title={Syzygies of $\mathbb{P}^{1}\times \mathbb{P}^{1}$: data and conjectures}, 
      author={Juliette Bruce and Daniel Corey and Daniel Erman and Steve Goldstein and Robert P. Laudone and Jay Yang},
      year={2021},
      eprint={2104.14598},
      archivePrefix={arXiv},
      primaryClass={math.AC},
      url={https://arxiv.org/abs/2104.14598}, 
}

@misc{bruce2017conjecturescomputationsveronesesyzygies,
      title={Conjectures and computations about Veronese syzygies}, 
      author={Juliette Bruce and Daniel Erman and Steve Goldstein and Jay Yang},
      year={2017},
      eprint={1711.03513},
      archivePrefix={arXiv},
      primaryClass={math.AC},
      url={https://arxiv.org/abs/1711.03513}, 
}

@misc{banerjee2021edgeidealserdosrenyirandom,
      title={Edge ideals of Erd\"{o}s-R\'enyi random graphs : Linear resolution, unmixedness and regularity}, 
      author={Arindam Banerjee and D. Yogeshwaran},
      year={2021},
      eprint={2007.08869},
      archivePrefix={arXiv},
      primaryClass={math.CO},
      url={https://arxiv.org/abs/2007.08869}, 
}

@misc{booms2021characteristicdependencesyzygiesrandom,
      title={Characteristic dependence of syzygies of random monomial ideals}, 
      author={Caitlyn Booms and Daniel Erman and Jay Yang},
      year={2021},
      eprint={2007.13914},
      archivePrefix={arXiv},
      primaryClass={math.AC},
      url={https://arxiv.org/abs/2007.13914}, 
}

@misc{engström2023regularityedgeideals,
      title={The regularity of almost all edge ideals}, 
      author={Alexander Engström and Milo Orlich},
      year={2023},
      eprint={2108.07304},
      archivePrefix={arXiv},
      primaryClass={math.AC},
      url={https://arxiv.org/abs/2108.07304}, 
}

@misc{dochtermann2023randomsubcomplexesbettinumbers,
      title={Random subcomplexes and Betti numbers of random edge ideals}, 
      author={Anton Dochtermann and Andrew Newman},
      year={2023},
      eprint={2104.12882},
      archivePrefix={arXiv},
      primaryClass={math.AC},
      url={https://arxiv.org/abs/2104.12882}, 
}

@misc{erman2025largealgebraicbettinumbers,
      title={About how large are algebraic Betti numbers?}, 
      author={Daniel Erman},
      year={2025},
      eprint={2506.08133},
      archivePrefix={arXiv},
      primaryClass={math.AC},
      url={https://arxiv.org/abs/2506.08133}, 
}

\end{document}